\DeclareMathAlphabet{\mathpzc}{OT1}{pzc}{m}{it} % Tipo de letra \mathpzc
\newcommand{\B}{\mathbb{B}}
\newcommand{\C}{\mathbb{C}}
\newcommand{\E}{\mathbb{E}}
\newcommand{\EE}{\mathcal{E}}
\newcommand{\G}{\Gamma}
\newcommand{\HH}{\mathbb{H}}
\newcommand{\N}{\mathbb{N}}
\newcommand{\PP}{\mathbb{P}}
\newcommand{\Q}{\mathbb{Q}}
\newcommand{\QQ}{\mathcal{Q}}
\newcommand{\R}{\mathbb{R}}
\newcommand{\Rn}{{\mathbb{R}^n}}
\newcommand{\W}{\mathbb{W}}
\DeclareMathOperator{\supp}{supp}
\newtheorem{ThA}{Theorem}
\newtheorem{Th}{Theorem}[section]              % Enumera los teoremas de acuerdo con la seccion (Theorem 1.1, Theorem 1.2 , ...)
\newtheorem{Prop}{Proposition}[section]
\newtheorem{Lem}{Lemma}[section]
\title[Square functions and spectral multipliers for Bessel operators in UMD spaces]
      {Square functions and spectral multipliers for Bessel operators in UMD spaces}
\author[J.J. Betancor]{Jorge J. Betancor}
\author[A.J. Castro]{Alejandro J. Castro}
\author[L. Rodr\'{\i}guez-Mesa]{L. Rodr\'{\i}guez-Mesa}
\address{\newline
        Jorge J. Betancor, Alejandro J. Castro and Lourdes Rodr\'iguez-Mesa \newline
        Departamento de An\'alisis Matem\'atico,
        Universidad de La Laguna, \newline
        Campus de Anchieta, Avda. Astrof\'{\i}sico Francisco S\'anchez, s/n, \newline
        38271, La Laguna (Sta. Cruz de Tenerife), Spain}
\email{jbetanco@ull.es, ajcastro@ull.es, lrguez@ull.es}
\keywords{Square functions, spectral multipliers, Bessel operators, UMD spaces, $\gamma$-radonifying operators}
\subjclass[2010]{42B25, 42B15, 42B20, 46B20, 46E40}
\thanks{The authors are partially supported by MTM2010/17974. The second author is also supported by a FPU grant from the Government of Spain.}
\begin{document}

\footnotetext{Date: \today.}

\maketitle                                  % Si no se activa esta opcion no se pone ni el titulo ni los autores en el encabezado de cada pagina

\begin{abstract}
    In this paper we consider square functions (also called Littlewood-Paley $g$-functions) associated to Hankel
    convolutions acting on functions in the Bochner Lebesgue space $L^p((0,\infty ),\mathbb{B})$, where $\mathbb{B}$
    is a UMD Banach space. As special cases we study square functions defined by fractional derivatives of the Poisson
    semigroup for the Bessel operator $\Delta _\lambda =-x^{-\lambda }\frac{d}{dx}x^{2\lambda }\frac{d}{dx}x^{-\lambda }$, $\lambda >0$.
    We characterize the UMD property for a Banach space $\mathbb{B}$ by using $L^p((0,\infty ),\mathbb{B})$-boundedness
    properties of $g$-functions defined by Bessel-Poisson semigroups. As a by product we prove that the fact that the imaginary
    power $\Delta_\lambda^{i\omega}$, $\omega \in \mathbb{R}\setminus\{0\}$, of the Bessel operator $\Delta _\lambda $ is bounded in
    $L^p((0,\infty ),\mathbb{B})$, $1<p<\infty$, characterizes the UMD property for the Banach space $\mathbb{B}$.
    As applications of our results for square functions we establish the boundedness in $L^p((0,\infty ),\mathbb{B})$
    of spectral multipliers $m(\Delta _\lambda )$ of Bessel operators defined by functions $m$ which are holomorphic in sectors $\Sigma_\vartheta$.
\end{abstract}

%%%%%%%%%%%%%%%%%%%%%%%%%%%%%%%%%%%%%%%%%%%%%%%%%%%%%%%%%%%%%%%%%%%%%%%%%%%%%%%%%%%%%%%%%%%%%%%%%%%%%%%%%%%%%%%%%%%%
\section{Introduction}\label{sec:intro}
%%%%%%%%%%%%%%%%%%%%%%%%%%%%%%%%%%%%%%%%%%%%%%%%%%%%%%%%%%%%%%%%%%%%%%%%%%%%%%%%%%%%%%%%%%%%%%%%%%%%%%%%%%%%%%%%%%%%
Square functions (also called Littlewood-Paley $g$-functions) were considered in the works of Littlewood, Paley,
Zygmund and Marcinkiewicz  during the decade of the 30's in the last century. These functions were introduced to
get new equivalent norms, for instance, in $L^p$-spaces. By using these new equivalent norms the boundedness of
some operators, for instance multipliers, can be established.

Suppose that $(\Omega, \Sigma, \mu)$ is a  $\sigma$-finite measure space and $\{T_t\}_{t>0}$ is a symmetric
diffusion semigroup of operators in the sense of Stein (\cite{Ste1}). For every $k\in \mathbb{N}$, the square
function $g_k$ associated to $\{T_t\}_{t>0}$ is defined by
$$g_k(\{T_t\}_{t>0})(f)(x)
    =\left(\int_0^\infty \left| t^{k}\partial_t^k T_t(f)(x)\right|^2 \frac{dt}{t}\right)^{1/2}.$$

In \cite[p. 120]{Ste1} it is shown that, for every $k\in \mathbb{N}$ and $1<p<\infty$, there exists $C>0$ such that
\begin{equation}\label{equiv}
    \frac{1}{C}\|f-E_0(f)\|_{L^p(\Omega ,\mu )}
        \leq \| g_k(\{T_t\}_{t>0})(f) \|_{L^p(\Omega ,\mu )}
        \leq C\|f\|_{L^p(\Omega ,\mu )},\quad f\in L^p(\Omega ,\mu ),
\end{equation}
where $E_0(f)=\lim_{t\rightarrow \infty}T_t(f)$ is the projection onto the fixed point space of $\{T_t\}_{t>0}$.
As application of \eqref{equiv} it can be proved that Laplace transform type multipliers associated with $\{T_t\}_{t>0}$
are bounded from $L^p(\Omega ,\mu )$ into itself, $1<p<\infty$ (\cite[p. 121]{Ste1}). Note that when   $E_0=0,$ \eqref{equiv}
says that by defining for every $1<p<\infty$ and $k\in \mathbb{N}$,
$$||| f |||_k
    =\| g_k(\{T_t\}_{t>0})(f) \|_{L^p(\Omega ,\mu )},\quad f\in L^p(\Omega ,\mu ),$$
$|||\cdot |||_k$ is an equivalent norm  to the usual norm in $L^p(\Omega,\mu )$. Meda in \cite{Me1} extends the property
\eqref{equiv} to symmetric contraction semigroups $\{T_t\}_{t>0}$ with $E_0=0$ and he applies it to get the boundedness in
$L^p$ of spectral multipliers $m(\mathcal{L})$ where the operator $\mathcal{L}$ is the infinitesimal generator of $\{T_t\}_{t>0}$
and $m$ is an holomorphic and bounded function in a sector $\Sigma_\vartheta =\{z\in \mathbb{C}: |\mbox{Arg }z|<\vartheta \}$. Here $1<p<\infty$
and $\vartheta \in [0,\pi /2]$ are connected.

We consider the functions
\begin{equation}\label{2.1}
    \W_t(z)
        =\frac{e^{-|z|^2/4t}}{(4\pi t)^{n/2}},\quad z\in \mathbb{R}^n\mbox{ and }t>0,
\end{equation}
and
$$\PP_t(z)
    =b_n\frac{t}{(t^2+|z|^2)^{(n+1)/2}},\quad z\in \mathbb{R}^n\mbox{ and }t>0,$$
where $b_n=\pi ^{-(n+1)/2}\Gamma ((n+1)/2)$.

As it is well-known the classical heat semigroup $\{\mathbb{W}_t\}_{t>0}$ is defined by
$$\W_t(f)(x)
    =\int_{\mathbb{R}^n} \W_t(x-y)f(y)dy,\quad  x\in \mathbb{R}^n \mbox{ and } t>0,$$
and the classical Poisson semigroup $\{\PP_t\}_{t>0}$ is given by
$$\PP_t(f)(x)
    =\int_{\mathbb{R}^n} \PP_t(x-y)f(y)dy,\quad x\in \mathbb{R}^n \mbox{ and } t>0 ,$$
for every $f\in L^p(\mathbb{R}^n)$. $\{\W_t\}_{t>0}$ and
$\{\PP_t\}_{t>0}$ are generated by $-\Delta$ and $-\sqrt{\Delta}$,
respectively, where $\Delta =-\sum_{j=1}^n \frac{\partial
^2}{\partial x_i^2}$ denotes the Laplace operator. The classical
heat and Poisson semigroups are the first examples of diffusion
semigroups having trivial fixed point space. For every measurable
function $g:\mathbb{R}^n\longrightarrow \mathbb{C}$, we define
$g_t(x)=t^{-n}g(x/t)$, $x\in \mathbb{R}^n$ and $t>0$.

Let $k\in \mathbb{N}$. We can write
$$g_k(\{\W_t\}_{t>0})(f)(x)
    = \left(\int_0^\infty |(\varphi _{\sqrt{t}}*f)(x)|^2\frac{dt}{t}\right)^{1/2},\quad x\in \mathbb{R}^n,$$
being $\varphi (x)=\partial _t^kG_{\sqrt{t}}(x)_{|t=1}$ and $G(x)=(4\pi )^{-n/2}e^{-|x|^2/4}$, $x\in \mathbb{R}^n$. Also, we have that
$$g_k(\{\PP_t\}_{t>0})(f)(x)
    =\left(\int_0^\infty |(\phi _t*f)(x)|^2\frac{dt}{t}\right)^{1/2},\quad x\in \mathbb{R}^n,$$
where $\phi (x)=\partial _t^k \PP_{t}(x)_{|t=1}$, $x\in \mathbb{R}^n$.

If $\psi \in L^2(\mathbb{R}^n)$ we consider the square function defined by
$$g_\psi (f)(x)
    =\left(\int_0^\infty  |(\psi _t*f)(x)|^2\frac{dt}{t}\right)^{1/2},\quad x\in \mathbb{R}^n.$$
Thus, $g_\psi$ includes as special cases $g_k(\{\W_t\}_{t>0})$ and $g_k(\{\PP_t\}_{t>0})$.

In the sequel, if $f\in \mathcal{S}(\mathbb{R}^n)$, the Schwartz class, we denote by $\widehat{f}$ the Fourier transform of $f$ given by
$$\widehat{f}(y)
    =\frac{1}{(2\pi )^{n/2}}\int_{\mathbb{R}^n}f(x)e^{-ix\cdot y}dx,\quad y\in \mathbb{R}^n.$$
As it is well-known the Fourier transform can be extended to $L^2(\mathbb{R}^n)$ as an isometry in $L^2(\mathbb{R}^n)$.

\begin{ThA}\label{ThA}
    Suppose that $\psi \in L^2(\mathbb{R}^n)$ satisfies the following properties:

    (i) if $\alpha =(\alpha _1,...,\alpha_n)\in \{0,1\}^n$ and $|\alpha |=\sum_{j=1}^n\alpha _j\leq 1+[n/2]$,
    the distributional derivative $\frac{\partial ^{|\alpha |}}{\partial x_1^{\alpha _1}...\partial x_n^{\alpha _n}}\widehat{\psi}$
    is represented by a measurable function and
    $$\sup_{|z|=1}\int_0^\infty t^{2|\alpha |}\left|\left(\frac{\partial ^{|\alpha |}}{\partial x_1^{\alpha _1}...\partial x_n^{\alpha _n}}\widehat{\psi}\right)(tz)\right|^2\frac{dt}{t}<\infty,$$

    (ii) $\displaystyle \inf_{|z|=1}\int_0^\infty |\widehat{\psi }(tz)|^2\frac{dt}{t}>0$.

    Then, for every $1<p<\infty$, there exists $C>0$ such that
    $$\frac{1}{C}\|f\|_{L^p(\mathbb{R}^n)}
        \leq \|g_\psi (f)\|_{L^p(\mathbb{R}^n)}\leq C\|f\|_{L^p(\mathbb{R}^n)},\quad f\in L^p(\mathbb{R}^n).$$
\end{ThA}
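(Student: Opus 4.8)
\emph{Proof proposal.} The plan is to realise $g_\psi$ as the $H$--norm of a Hilbert--space--valued convolution operator, where $H=L^2((0,\infty),dt/t)$, and then to combine the theory of $H$--valued Calder\'on--Zygmund operators (for the right--hand inequality) with a polarization argument (for the left--hand one). For $f$ in the Schwartz class put $G_\psi f(x)=(\psi_t*f(x))_{t>0}\in H$, so that $g_\psi(f)(x)=\|G_\psi f(x)\|_H$. Since $\widehat{\psi_t}(\xi)=\widehat\psi(t\xi)$ with the normalisation fixed above, Plancherel's theorem and Fubini give
\[
    \|g_\psi(f)\|_{L^2(\mathbb{R}^n)}^2=\int_{\mathbb{R}^n}|\widehat f(\xi)|^2\,\Phi(\xi)\,d\xi,\qquad \Phi(\xi):=\int_0^\infty|\widehat\psi(t\xi)|^2\,\frac{dt}{t}.
\]
The function $\Phi$ is homogeneous of degree $0$; by hypothesis (i) with $\alpha=0$ it is bounded above, and by (ii) it is bounded below by a positive constant. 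Hence $\|g_\psi(f)\|_2\simeq\|f\|_2$, which settles the case $p=2$.

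For the right--hand inequality with $1<p<\infty$, observe that $G_\psi$ is an $H$--valued convolution operator with kernel $k(x)=(\psi_t(x))_{t>0}$, bounded from $L^2(\mathbb{R}^n)$ into $L^2(\mathbb{R}^n,H)$ by the previous paragraph. It therefore suffices to establish the size and H\"ormander smoothness estimates
\[
    \|k(x)\|_H\le\frac{C}{|x|^n}, \qquad \int_{|x|>2|y|}\|k(x-y)-k(x)\|_H\,dx\le C,\quad y\in\mathbb{R}^n\setminus\{0\},
\]
from which the $H$--valued Calder\'on--Zygmund theory yields $\|g_\psi(f)\|_p=\|G_\psi f\|_{L^p(\mathbb{R}^n,H)}\le C\|f\|_p$ for every $1<p<\infty$. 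This is the step where hypothesis (i) is genuinely needed: after the substitution $t\mapsto|x|/t$ one has $\|k(x)\|_H^2=|x|^{-2n}\int_0^\infty s^{2n}|\psi(s\,x/|x|)|^2\,ds/s$, and $\|\partial_x k(x)\|_H$ reduces in the same way to a weighted $L^2(ds/s)$--average of $\nabla\psi$; these averages of $\psi$ are transferred to the averages of $\widehat\psi$ appearing in (i) by means of the Fourier inversion formula and the commutation of the Fourier transform with dilations, the number $1+[n/2]$ of square--free derivatives being precisely what makes this transference work (it sits comfortably above the threshold $(n-1)/2$ of a Sobolev embedding on the sphere $S^{n-1}$ that controls the relevant angular integrals). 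I expect this Fourier--analytic transference, rather than the soft functional--analytic part, to be the main obstacle.

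It remains to prove $\|f\|_p\le C\|g_\psi(f)\|_p$. Since $\Phi$ is $0$--homogeneous, bounded and bounded away from zero, and since differentiating $\Phi$ and applying the Cauchy--Schwarz inequality in $t$ reduces $\partial^\beta\Phi(\xi)$, for square--free $\beta$ with $|\beta|\le 1+[n/2]$, to products of the averages in (i), both $\Phi$ and $1/\Phi$ satisfy a H\"ormander--Mikhlin multiplier condition; in particular the Fourier multiplier operator $S$ with symbol $1/\Phi$ is bounded on $L^q(\mathbb{R}^n)$ for every $1<q<\infty$. For $f,g$ in the Schwartz class, writing $\widehat f(\xi)\,\overline{\widehat g(\xi)}=\widehat f(\xi)\,\Phi(\xi)\,\overline{\widehat{Sg}(\xi)}$ and inserting the definition of $\Phi$, Plancherel's theorem and Fubini give
\[
    \langle f,g\rangle=c_n\int_{\mathbb{R}^n}\int_0^\infty(\psi_t*f)(x)\,\overline{(\psi_t*Sg)(x)}\,\frac{dt}{t}\,dx.
\]
Applying the Cauchy--Schwarz inequality in $t$ and then H\"older's inequality in $x$,
\[
    |\langle f,g\rangle|\le c_n\int_{\mathbb{R}^n}g_\psi(f)(x)\,g_\psi(Sg)(x)\,dx\le c_n\,\|g_\psi(f)\|_{L^p(\mathbb{R}^n)}\,\|g_\psi(Sg)\|_{L^{p'}(\mathbb{R}^n)},
\]
and by the right--hand inequality already proved (with exponent $p'$) together with the $L^{p'}$--boundedness of $S$ we obtain $\|g_\psi(Sg)\|_{p'}\le C\|g\|_{p'}$. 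Taking the supremum over all $g$ with $\|g\|_{p'}\le 1$ gives $\|f\|_p\le C\|g_\psi(f)\|_p$. A standard density argument then extends both inequalities to all of $L^p(\mathbb{R}^n)$, completing the proof.
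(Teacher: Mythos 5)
Your overall architecture (realize $g_\psi$ as the $H$-norm of an $H$-valued convolution, Plancherel for $p=2$, vector-valued Calder\'on--Zygmund theory for the upper bound, polarization plus duality for the lower bound) is the standard route; note that the paper itself does not prove Theorem A -- it quotes it as the scalar case $\gamma(H,\mathbb{C})=H$ of Theorem B (Kaiser--Weis) -- and your lower-bound argument parallels what the paper does in Section 2.2, with your multiplier $S$ (symbol $1/\Phi$) playing the role of the auxiliary function of Lemma 2.1. The genuine gap is in your verification of the kernel estimates. The pointwise bounds $\|k(x)\|_H\le C|x|^{-n}$ and the gradient analogue do not follow from hypothesis (i), and can in fact fail under (i)--(ii): for a fixed direction $\omega=x/|x|$ one has $\|k(x)\|_H^2=|x|^{-2n}\int_0^\infty s^{2n-1}|\psi(s\omega)|^2\,ds$, whose finiteness requires decay of $\psi$ of order strictly greater than $n$ along almost every ray, whereas (i) provides only square-free derivatives of $\widehat\psi$ up to order $1+[n/2]$, i.e.\ (by duality) decay of $\psi$ of order roughly $n/2+1$ in an averaged sense; for $n\ge 3$ one can build $\psi$ satisfying (i)--(ii) (e.g.\ $\widehat\psi$ with a mild singularity along a hyperplane away from the origin) for which $\|k(x)\|_H=+\infty$ along a whole ray. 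Moreover, the ``transference'' you invoke is not available: condition (i) controls $\widehat\psi$ along rays, and the values of $\psi$ along a ray are not determined by $\widehat\psi$ along that ray, so no Fourier-inversion/dilation argument can convert one family of ray integrals into the other.

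What the hypothesis $1+[n/2]$ is actually calibrated for is the \emph{integrated} H\"ormander condition alone, with no pointwise size bound (for a convolution operator, $L^2$-boundedness plus $\int_{|x|>2|y|}\|k(x-y)-k(x)\|_H\,dx\le C$ already gives $1<p\le 2$, and duality, the adjoint kernel being $\overline{k(-x)}$, gives the rest). The correct verification splits $\{|x|>2|y|\}$ into dyadic annuli, applies the Cauchy--Schwarz inequality on each annulus against the weight $\sum_{\alpha\in\{0,1\}^n,\ |\alpha|\le 1+[n/2]}|x^\alpha|^2$, and uses Plancherel and Fubini to rewrite $\int_{\mathbb{R}^n}|x^\alpha|^2\,\|k(x-y)-k(x)\|_H^2\,dx$ as $c_n\int_0^\infty\!\!\int_{\mathbb{R}^n}\bigl|\partial^\alpha_\xi\bigl[\widehat\psi(t\xi)(e^{-iy\cdot\xi}-1)\bigr]\bigr|^2\,d\xi\,\frac{dt}{t}$, which in polar coordinates is controlled exactly by the suprema in (i); the inequality $2(1+[n/2])>n$ (together with the square-free structure of the weight) makes the dyadic series converge. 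This is the mechanism behind Theorem B, and it is what your step 2 must be replaced by. A lesser point in the same spirit: your bounds for $1/\Phi$ give only square-free derivatives of order $\le 1+[n/2]$, which is literally neither the Mikhlin nor the H\"ormander hypothesis; it does suffice, but by the same annulus/Plancherel argument, so it needs a short justification rather than a citation. Finally, note that your lower-bound paragraph inherits the upper-bound statement for the exponent $p'$, so it stands or falls with the repair of step 2.
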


Also, square functions can be defined by using functional calculus for operators
(see, for instance, \cite{LeMer2} and \cite{Me2}). Note that if $A$ is the infinitesimal generator of the semigroup
$\{T_t\}_{t>0}$ we can write, for every $k\in \mathbb{N}$,
$$t^k\partial _t^kT_t
    =F_k(tA),\quad t>0,$$
where $F_k(z)=(-z)^ke^{-z}$, $z\in \mathbb{C}$.

Suppose that $\mathbb{B}$ is a Banach space and $T$ is a linear bounded operator from
$L^p(\Omega ,\mu )$ into itself where $1<p<\infty$. We define $T\otimes I_\mathbb{B}$ on $L^p(\Omega ,\mu )\otimes \mathbb{B}$
in the usual way. If $T$ is positive, $T\otimes I_\mathbb{B}$ can be extended to $L^p(\Omega ,\mu ,\mathbb{B})$
as a bounded operator from $L^p(\Omega ,\mu ,\mathbb{B})$ into itself, and to simplify we continue denoting this extension by $T$.

The question is to give a definition for the square functions, when we consider functions taking values in a Banach space $\mathbb{B}$,
defining equivalent norms in the Bochner-Lebesgue space $L^p(\Omega, \mu ,\mathbb{B})$, $1<p<\infty$.

Let $\{T_t\}_{t>0}$ be a symmetric diffusion semigroup on a $\sigma$-finite measure space $(\Omega, \Sigma ,\mu )$.
We denote by $\{P_t\}_{t>0}$ the subordinated semigroup to $\{T_t\}_{t>0}$, that is,
$$P_t(f)
    =\frac{t}{2\sqrt{\pi}}\int_0^\infty \frac{e^{-t^2/(4u)}}{u^{3/2}}T_u(f)du,\quad t>0.$$
Thus $\{P_t\}_{t>0}$ is also a symmetric diffusion semigroup. The classical Poisson semigroup is the subordinated semigroup of the classical heat semigroup.

In order to define $g$-functions in a Banach valued setting, the more natural way is to replace the absolute value
in the scalar definition by the norm in $\mathbb{B}$. This is the way followed, for instance, in \cite{MTX} and \cite{Xu}
where they work with square functions defined by subordinated semigroups $\{P_t\}_{t>0}$ as follows
$$g_{1,\mathbb{B}}(\{P_t\}_{t>0})(f)(x)
    =\left(\int_0^\infty \|t\partial_tP_t(f)(x)\|_{\mathbb{B}}^2\frac{dt}{t}\right)^{1/2},\quad x\in \Omega .$$
Actually in \cite{MTX} and \cite{Xu} generalized square functions are considered where the $L^2$-norm is replaced by the
$L^q$-norm, $1<q<\infty$. As a consequence of \cite[Theorems 5.2 and 5.3]{MTX} (see also \cite{Kw}) we deduce that
for a certain $1<p<\infty$ there exists $C>0$ such that, for every $f\in L^p(\Rn,\mathbb{B})$,
$$ \frac{1}{C} \|f\|_{L^p(\Rn,\mathbb{B})}
    \leq \|g_{1,\mathbb{B}}(\{\PP_t\}_{t>0})(f) \|_{L^p(\Rn )}
    \leq C \|f\|_{L^p(\Rn,\mathbb{B})},$$
if, and only if, $\mathbb{B}$ is isomorphic to a Hilbert space.

In order to get new equivalent norms for $L^p(\Omega, \mu ,\mathbb{B})$ by using square functions,
for Banach spaces $\mathbb{B}$ which are not isomorphic to Hilbert spaces, stochastic integrals and $\gamma$-radonifying
ope\-rators have been considered. We point out the papers of Bourgain \cite{Bou}, Hyt\"onen \cite{Hy}, Hyt\"onen, Van Neerven and
Portal \cite{HNP}, Hyt\"onen and Weis \cite{HW},  Kaiser \cite{Ka},  and  Kaiser and Weis \cite{KaWe}, amongst others.
In this work we use $\gamma$-radonifying operators. We recall some definitions and properties about this kind of operators that will be useful in the sequel.

Assume that $\mathbb{B}$ is a Banach space and $H$ is a Hilbert space. We choose a sequence $\{\gamma _j\}_{j\in \mathbb{N}}$
of independent standard Gaussian variables defined on some probability space $(\Omega, \mathcal{F},\rho)$.
By $\mathbb{E}$ we denote the expectation with respect to $\rho$. A linear operator $T:H\longrightarrow \mathbb{B}$ is said to be $\gamma$-summing ($T\in \gamma ^\infty (H,\mathbb{B})$) when
$$\|T\|_{\gamma ^\infty (H,\mathbb{B})}
    =\sup\left(\mathbb{E} \Big\|\sum_{j=1}^k\gamma _jT(h_j)\Big\|_\mathbb{B}^2\right)^{1/2}<\infty ,$$
where the supremum is taken over all the finite family $\{h_j\}_{j=1}^k$ of orthonormal vectors in $H$.
$\gamma ^\infty (H,\mathbb{B})$ endowed with the norm $\|\cdot\|_{\gamma ^\infty (H,\mathbb{B})}$ is a Banach space.
We say that a linear operator $T:H\longrightarrow \mathbb{B}$ is $\gamma$-radonifying (shortly, $T \in \gamma(H,\B)$) when
$T\in \overline{\mathcal{F}(H,\mathbb{B})}^{\gamma ^\infty (H,\mathbb{B})}$, where $\mathcal{F}(H,\mathbb{B})$ denotes
the span of finite range operators from $H$ to $\B$. If $\mathbb{B}$ does not contain isomorphic copies of $c_0$ then,
$\gamma (H,\mathbb{B})=\gamma ^\infty (H,\mathbb{B})$ (\cite{HJ}, \cite{Kw1} and \cite[Theorem 5.9]{Nee}). Note that if $\mathbb{B}$ is UMD,
$\mathbb{B}$ does not contain isomorphic copies of $c_0$. If $H$ is separable and $\{h_n\}_{n\in \mathbb{N}}$ is an
orthonormal basis of $H$, a linear operator $T:H\longrightarrow \mathbb{B}$ is $\gamma$-radonifying, if and only if,
the series $\sum_{j=1}^\infty \gamma _jT(h_j)$ converges in $L^2(\Omega ,\mathbb{B})$ and, in this case,
$$\|T\|_{\gamma ^\infty (H,\mathbb{B})}
    =\left(\mathbb{E}\Big\|\sum_{j=1}^\infty \gamma _jT(h_j)\Big\|_\mathbb{B}^2\right)^{1/2}.$$
In the sequel we write $\|\cdot\|_{\gamma (H,\mathbb{B})}$ to refer to $\|\cdot\|_{\gamma ^\infty (H,\mathbb{B})}$ when acts on $\gamma (H,\mathbb{B})$.

Throughout this paper we always consider $H=L^2((0,\infty ),dt/t)$.
Suppose that $f:(0,\infty )\longrightarrow \mathbb{B}$ is a measurable function such that $S\circ f\in H$,
for every $S\in \mathbb{B}^*$, the dual space of $\mathbb{B}$. Then, there exists a bounded linear operator
$T_f:H\longrightarrow \mathbb{B}$ (shortly, $T_f\in L(H,\mathbb{B})$) such that
$$\langle S,T_f(h)\rangle_{\B^*,\B}
    =\int_0^\infty \langle S,f(t)\rangle_{\B^*,\B} h(t)\frac{dt}{t},\quad h\in H\mbox{ and }S\in \mathbb{B}^*.$$
We say that $f\in \gamma ((0,\infty ),dt/t,\mathbb{B})$ provided that $T_f\in \gamma (H,\mathbb{B})$.
If $\mathbb{B}$ does not contain isomorphic copies of $c_0$, then the space $\{T_f: f\in \gamma ((0,\infty ),dt/t,\mathbb{B})\}$
is dense in $\gamma (H,\mathbb{B})$. It is usual to identify $f$ and $T_f$.

Banach spaces with the UMD property play an important role in our results.
The Hilbert transform $\mathcal{H}(f)$ of $f\in L^p (\mathbb{R})$, $1\leq p<\infty$, is defined by
$$\mathcal{H}(f)(x)
    =\lim_{\varepsilon \rightarrow 0^+} \frac{1}{\pi} \int_{|x-y|>\varepsilon }\frac{f(y)}{x-y}dy,\quad \mbox{ a.e.  }x\in\mathbb{R}.$$
As it is well-known the Hilbert transform defines a bounded linear operator from $L^p(\mathbb{R})$ into itself, $1<p<\infty$,
and from $L^1(\mathbb{R})$ into $L^{1,\infty }(\mathbb{R})$. $\mathcal{H}$ is defined on $L^p(\mathbb{R})\otimes \mathbb{B}$, $1\leq p<\infty$,
in a natural way. We say that $\mathbb{B}$ is a UMD space when the Hilbert transform can be extended to $L^p(\mathbb{R},\mathbb{B})$
as a bounded operator from $L^p(\mathbb{R},\mathbb{B})$ into itself for some (equivalently, for every) $1<p<\infty$.
There exist a lot of characterizations of UMD Banach spaces. The papers of  Bourgain \cite{Bou} and Burkholder \cite{Bu3} have
been fundamental in the development of the theory of Banach spaces with the UMD property. UMD Banach spaces are the suitable setting
to analyze vector valued Littlewood-Paley functions.

Kaiser and Weis \cite{KaWe} (see also \cite{Ka}) considered, for every $\psi \in L^2(\mathbb{R}^n)$,
the operator (usually called wavelet transform associated to $\psi$) $\mathcal{W}_\psi$ defined by
$$\mathcal{W}_\psi (f)(t,x)
    =(\psi_t *f)(x),\quad x\in \mathbb{R}^n \text{ and } t>0,$$
for every $f\in \mathcal{S}(\mathbb{R}^n,\mathbb{B})$, the $\B$-valued Schwartz space.

The following result was established in \cite[Theorem 4.2]{KaWe}.
\begin{ThA} \label{ThB}
    Suppose that $\mathbb{B}$ is a UMD Banach space with Fourier type $r\in (1,2]$ and $\psi \in L^2(\mathbb{R}^n)$ satisfies the following two conditions:

    (i) If $\alpha =(\alpha_1,...,\alpha _n)\in \{0,1\}^n$ and $|\alpha |\leq 1+[n/r]$, the distributional
    derivative $\frac{\partial ^{|\alpha |}}{\partial x_1^{\alpha _1}...\partial x_n^{\alpha _n}}\widehat{\psi}$ is represented by a measurable function and
    $$\sup_{|z|=1}\int_0^\infty t^{2|\alpha |}\left|\left(\frac{\partial ^{|\alpha |}}{\partial x_1^{\alpha _1}...\partial x_n^{\alpha _n}}\widehat{\psi}\right)(tz)\right|^2\frac{dt}{t}<\infty.$$

    (ii) $\displaystyle \inf_{|z|=1}\int_0^\infty |\widehat{\psi }(tz)|^2\frac{dt}{t}>0$.

    \noindent  Then, for every $1<p<\infty$ there exists $C>0$ such that
    $$\frac{1}{C}\|f\|_{L^p(\mathbb{R}^n,\mathbb{B})}
        \leq \|\mathcal{W}_\psi (f)\|_{L^p(\mathbb{R}^n,\gamma (H,\mathbb{B}))}\leq C\|f\|_{L^p(\mathbb{R}^n,\mathbb{B})},\quad f\in \mathcal{S}(\mathbb{R}^n,\mathbb{B}).$$
\end{ThA}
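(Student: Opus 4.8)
The plan is to prove the two estimates separately. The upper bound $\|\mathcal{W}_\psi(f)\|_{L^p(\R^n,\gamma(H,\mathbb{B}))}\le C\|f\|_{L^p(\R^n,\mathbb{B})}$ will be obtained by realizing $\mathcal{W}_\psi$ as an operator-valued Fourier multiplier and applying a vector-valued Mikhlin-type multiplier theorem valid for operators between UMD spaces when the domain has Fourier type $r$ (with the number of required derivatives reduced accordingly); the lower bound will follow from a Calderón reproducing formula together with the trace duality between $\gamma(H,\mathbb{B})$ and $\gamma(H,\mathbb{B}^*)$, which reduces it to the upper bound applied to a companion function $\widetilde\psi$ on $L^{p'}(\R^n,\mathbb{B}^*)$.

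For the upper bound, taking Fourier transforms in $x$ gives $\widehat{\psi_t\ast f}(\xi)=c_n\widehat\psi(t\xi)\widehat f(\xi)$, so that $\mathcal{W}_\psi$ is the Fourier multiplier operator whose symbol is $M:\R^n\setminus\{0\}\to L(\mathbb{B},\gamma(H,\mathbb{B}))$, $M(\xi)b=c_n\big(t\mapsto\widehat\psi(t\xi)\big)\otimes b$. Then I would verify the hypotheses of the multiplier theorem. Here $\gamma(H,\mathbb{B})$ inherits from $\mathbb{B}$ the UMD property and the Fourier type $r$. For $\alpha\in\{0,1\}^n$ with $|\alpha|\le 1+[n/r]$ one has $D^\alpha M(\xi)b=c_n\big(t\mapsto t^{|\alpha|}(D^\alpha\widehat\psi)(t\xi)\big)\otimes b$, and because $|\xi|^{2|\alpha|}\int_0^\infty t^{2|\alpha|}|(D^\alpha\widehat\psi)(t\xi)|^2\frac{dt}{t}$ is invariant under $\xi\mapsto\xi/|\xi|$, hypothesis (i) gives $\sup_{\xi\ne0}\big\||\xi|^{|\alpha|}D^\alpha M(\xi)\big\|_{L(\mathbb{B},\gamma(H,\mathbb{B}))}<\infty$. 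Moreover each $|\xi|^{|\alpha|}D^\alpha M(\xi)$ is an operator of the form $b\mapsto h\otimes b$ with $\|h\|_H$ bounded, and a family of such operators is automatically $R$-bounded (equivalently, in this UMD setting, $\gamma$-bounded): via the $\gamma$-Fubini isomorphism $\gamma(\ell^2,\gamma(H,\mathbb{B}))\cong\gamma(\ell^2\otimes H,\mathbb{B})$ the relevant finite-rank operators factor through a contraction onto the span of orthogonal vectors of bounded norm, and the ideal property of $\gamma$-radonifying operators closes the estimate. With the $R$-boundedness of $\{|\xi|^{|\alpha|}D^\alpha M(\xi):\xi\ne0\}$ for all admissible $\alpha$, the operator-valued Mikhlin/Hörmander multiplier theorem for UMD spaces of Fourier type $r$ — which needs only $1+[n/r]$ derivatives precisely because of the Fourier type assumption — gives the boundedness of $\mathcal{W}_\psi$ from $L^p(\R^n,\mathbb{B})$ to $L^p(\R^n,\gamma(H,\mathbb{B}))$.

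For the lower bound, put $\Psi(\xi)=\int_0^\infty|\widehat\psi(t\xi)|^2\frac{dt}{t}$, which by (i) (with $\alpha=0$) and (ii) is finite, bounded below by a positive constant, and invariant under $\xi\mapsto\xi/|\xi|$. Choosing $\widetilde\psi\in L^2(\R^n)$ via its Fourier transform so that $\int_0^\infty\widehat\psi(t\xi)\widehat{\widetilde\psi}(t\xi)\frac{dt}{t}$ is a nonzero constant for all $\xi\ne0$ (e.g. $\widehat{\widetilde\psi}=c\,\overline{\widehat\psi}/\Psi$), one gets the reproducing formula $f=\int_0^\infty\widetilde\psi_t\ast\psi_t\ast f\,\frac{dt}{t}$, and $\widetilde\psi$ again satisfies (i) and (ii) (the smoothness of $\Psi$ on the unit sphere being controlled by (i), using that $\Psi$ is bounded away from $0$). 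Since $\mathbb{B}^*$ is UMD of Fourier type $r$, the upper bound already proved applies to $\mathcal{W}_{\widetilde\psi}$ on $L^{p'}(\R^n,\mathbb{B}^*)$ (after the harmless reflection $x\mapsto-x$ needed when moving a convolution across the pairing). For $f\in\mathcal{S}(\R^n,\mathbb{B})$ and $g$ in a dense subspace of $L^{p'}(\R^n,\mathbb{B}^*)$, the reproducing formula and bilinearity of the $\mathbb{B}$--$\mathbb{B}^*$ pairing give
$$\langle f,g\rangle=\int_{\R^n}\int_0^\infty\big\langle(\psi_t\ast f)(x),(\widetilde\psi_t\ast g)(x)\big\rangle_{\mathbb{B},\mathbb{B}^*}\frac{dt}{t}\,dx,$$
where the inner double integral is the trace-duality pairing of $\mathcal{W}_\psi(f)(\cdot,x)\in\gamma(H,\mathbb{B})$ with $\mathcal{W}_{\widetilde\psi}(g)(\cdot,x)\in\gamma(H,\mathbb{B}^*)$, which satisfies $|\langle u,v\rangle|\le\|u\|_{\gamma(H,\mathbb{B})}\|v\|_{\gamma(H,\mathbb{B}^*)}$. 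Hölder's inequality and the upper bound for $\mathcal{W}_{\widetilde\psi}$ then yield
$$|\langle f,g\rangle|\le\|\mathcal{W}_\psi(f)\|_{L^p(\R^n,\gamma(H,\mathbb{B}))}\|\mathcal{W}_{\widetilde\psi}(g)\|_{L^{p'}(\R^n,\gamma(H,\mathbb{B}^*))}\le C\|\mathcal{W}_\psi(f)\|_{L^p(\R^n,\gamma(H,\mathbb{B}))}\|g\|_{L^{p'}(\R^n,\mathbb{B}^*)},$$
and taking the supremum over such $g$, which norm $L^p(\R^n,\mathbb{B})$, gives $\|f\|_{L^p(\R^n,\mathbb{B})}\le C\|\mathcal{W}_\psi(f)\|_{L^p(\R^n,\gamma(H,\mathbb{B}))}$.

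I expect the main difficulty to be the upper bound, namely identifying precisely the operator-valued Mikhlin/Hörmander multiplier theorem whose hypotheses are met by condition (i) — which controls only the mixed derivatives with $\alpha\in\{0,1\}^n$ and $|\alpha|\le 1+[n/r]$, not all derivatives up to that order — in combination with the $R$-boundedness of the $\gamma$-tensoring operators and the UMD/Fourier-type structure of the target space $\gamma(H,\mathbb{B})$. The remaining points — well-definedness of $\mathcal{W}_\psi(f)$ as a $\gamma(H,\mathbb{B})$-valued function (treated first on $\mathcal{S}(\R^n,\mathbb{B})$ and then by density), justification of the Parseval/reproducing-formula identity on a dense class, and the verification that $\widetilde\psi$ inherits conditions (i) and (ii) — are routine.
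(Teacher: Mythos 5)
This statement is not proved in the paper at all: it is quoted verbatim from Kaiser and Weis \cite[Theorem 4.2]{KaWe}, so there is no internal argument to compare yours with. Your outline is essentially the route taken in that cited source (and its antecedents): realize $\mathcal{W}_\psi$ as an operator-valued Fourier multiplier with symbol $\xi\mapsto\big(b\mapsto(t\mapsto\widehat\psi(t\xi))\otimes b\big)\in L(\mathbb{B},\gamma(H,\mathbb{B}))$, apply a Girardi--Weis type Mikhlin/H\"ormander theorem whose derivative count $1+[n/r]$ (over $\alpha\in\{0,1\}^n$, which is exactly how that theorem is formulated, so your worry on that point is resolvable) comes from the Fourier type $r$, and obtain the lower bound by trace duality $|\langle u,v\rangle|\le\|u\|_{\gamma(H,\mathbb{B})}\|v\|_{\gamma(H,\mathbb{B}^*)}$ together with a Calder\'on reproducing formula for a companion $\widetilde\psi$ with $\widehat{\widetilde\psi}=c\,\overline{\widehat\psi}/\Psi$; that $\widetilde\psi$ inherits (i) does work out, precisely because the Leibniz splittings of an $\alpha\in\{0,1\}^n$ stay in $\{0,1\}^n$ and derivatives of $\Psi$ are handled by Cauchy--Schwarz and the homogeneity $\Psi(t\xi)=\Psi(\xi)$.

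The one genuine soft spot is your justification of the $R$-boundedness of $\{|\xi|^{|\alpha|}D^\alpha M(\xi):\xi\neq0\}$. You claim a family of operators $b\mapsto h\otimes b$ with $\sup\|h\|_H<\infty$ is ``automatically'' $R$-bounded from $\mathbb{B}$ to $\gamma(H,\mathbb{B})$, via the isomorphism $\gamma(\ell^2,\gamma(H,\mathbb{B}))\cong\gamma(\ell^2\otimes H,\mathbb{B})$. That $\gamma$-Fubini isomorphism is equivalent to Pisier's property $(\alpha)$ and fails for some UMD spaces (e.g.\ the Schatten classes $C_p$, $p\neq2$, which this very paper singles out as UMD spaces in $I_\theta(\mathpzc{H},UMD)$), so the argument as written does not cover the stated hypotheses. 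Nor is the $R$-boundedness true in arbitrary Banach spaces: with orthonormal $h_j$ and unit vectors $b_j$ in $\ell^\infty_N$ one compares $\mathbb{E}\max_j|\gamma_j\gamma_j'|^2\sim(\log N)^2$ with $\mathbb{E}\max_j|\gamma_j|^2\sim\log N$, so some geometric hypothesis is indispensable. The step can be repaired, but you must use the finite cotype of UMD spaces: the required one-sided estimate (a decoupled second-order Gaussian sum dominated by a first-order one) follows from the comparison theorems for random sums in finite-cotype spaces (equivalently, the cotype half of property $(\alpha)$), or one can avoid the issue altogether, as Kaiser and Weis do, by using a multiplier theorem whose hypotheses are phrased directly in terms of $\gamma$-norms of the symbol rather than $R$-boundedness of an operator family. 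As it stands, this is the missing ingredient in your upper bound; the rest of the proposal is sound.
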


Note that since $\gamma  (H,\mathbb{C})=H$, Theorem \ref{ThB} can be seen as a vector-valued generalization of Theorem \ref{ThA}.
We recall that every UMD Banach space has Fourier type greater than 1 (\cite{Bou1}) and the complex plane $\mathbb{C}$ has Fourier type 2.

Our objective in this paper is to get new equivalent norms for $L^p((0,\infty ),\mathbb{B})$, when $\mathbb{B}$
is a UMD Banach space, by using square functions involving Hankel convolutions and Poisson semigroups associated
with Bessel operators. These square functions allow us to obtain new characterizations of UMD Banach spaces.
We also describe the UMD property by the boundedness in $L^p((0,\infty ),\mathbb{B})$, $1<p<\infty$, of
the imaginary power $\Delta _\lambda^{i\omega}$, $\omega \in \mathbb{R}\setminus\{0\}$, of the Bessel operator
$\Delta _\lambda =-x^{-\lambda }\frac{d}{dx}x^{2\lambda }\frac{d}{dx}x^{-\lambda}$, on $(0,\infty )$.
As a consequence of our results about square functions in the Bessel setting we obtain $L^p((0,\infty ),\mathbb{B})$-boundedness
properties for spectral multipliers associated with Bessel operators.

If $J_\nu$ denotes the Bessel function of the first kind and order $\nu >-1$, we have that
\begin{equation}\label{BesselJ}
    \Delta_{\lambda ,x}(\sqrt{xy}J_{\lambda -1/2}(xy))=y^2\sqrt{xy}J_{\lambda -1/2}(xy),\quad x,y\in (0,\infty ).
\end{equation}
Here and in the sequel, unless otherwise stated, we assume that $\lambda >0$. The Hankel transform $h_\lambda (f)$ of $f\in L^1(0,\infty )$ is defined by
$$h_\lambda (f)(x)
    =\int_0^\infty \sqrt{xy}J_{\lambda -1/2}(xy)f(y)dy,\quad x\in (0,\infty ).$$
This transform plays in the Bessel setting the same role as the Fourier transformation in the classical (Laplacian) setting (see \eqref{BesselJ}).

We consider the space $\mathcal{S}_\lambda (0,\infty )$ of all those smooth functions
$\phi$ on $(0,\infty )$ such that, for every $m,k \in \mathbb{N}$,
$$\eta _{m,k}^\lambda (\phi )
    =\sup_{x\in (0,\infty )} x^m\left|\left(\frac{1}{x}\frac{d}{dx}\right)^k(x^{-\lambda }\phi (x))\right|<\infty .$$
If $\mathcal{S}_\lambda (0,\infty )$ is endowed with the topology generated by the family
$\{\eta _{m,k}^\lambda \}_{m,k\in \mathbb{N}}$ of seminorms, $\mathcal{S}_\lambda (0,\infty )$ is a
Fr\'echet space and $h_\lambda $ is an isomorphism on $\mathcal{S}_\lambda (0,\infty )$ (\cite[Lemma 8]{Ze}).
Moreover, $h_\lambda ^{-1}=h_\lambda$ on $\mathcal{S}_\lambda (0,\infty)$.
The Hankel transformation $h_\lambda$ can be also extended to $L^2(0,\infty )$ as an isometry (\cite[p. 214 and Theorem 129]{Ti}).

By adapting the results in \cite{Hi}, we define the Hankel convolution $f\# _\lambda g$ of $f,g\in L^1((0,\infty ), x^\lambda dx)$ by
$$(f\# _\lambda g)(x)
    =\int_0^\infty f(y) \;_\lambda \tau _x(g)(y)dy,\quad x\in (0,\infty ),$$
where the Hankel translation $\;_\lambda \tau _x(g)$ of $g$ is given by
$$\;_\lambda \tau _x(g)(y)
    =\frac{(xy)^\lambda }{\sqrt{\pi}2^{\lambda -1/2}\Gamma (\lambda )}\int_0^\pi (\sin \theta )^{2\lambda -1}\frac{g(\sqrt{(x-y)^2+2xy(1-\cos \theta )})}{((x-y)^2+2xy(1-\cos \theta ))^{\lambda /2}}d\theta ,\quad x,y\in (0,\infty ).$$
Note that there is not a group operation $\circ$ on $(0,\infty )$ for which
$$\;_\lambda \tau _x(g)(y)
    =g(x\circ y),\quad x,y\in (0,\infty ).$$
The following interchange formula holds
\begin{equation}\label{6.1}
    h_\lambda (f\# _\lambda g)
        =x^{-\lambda }h_\lambda (f)h_\lambda (g),\quad f,g\in L^1((0,\infty ), x^\lambda dx).
\end{equation}
If $\psi$ is a measurable function on $(0,\infty )$ we define
$$\psi _{(t)}(x)
    =\psi _{(t)}^\lambda (x)=\frac{1}{t^{\lambda +1}}\psi \Big(\frac{x}{t}\Big),\quad t,x\in (0,\infty ).$$
If $\psi \in \mathcal{S}_\lambda (0,\infty )$ and $\mathbb{B}$ is a Banach space, we define the operator (Hankel wavelet transform)
$\mathcal{W}_{\psi ,\mathbb{B}}^\lambda $ as follows:
$$\mathcal{W}_{\psi ,\mathbb{B}}^\lambda (f)(t,x)
    =(\psi _{(t)} \# _\lambda f)(x),\quad t,x\in (0,\infty ),$$
for every $f\in L^p((0,\infty ),\mathbb{B})$, $1<p<\infty$.

We establish in our first result a Hankel version of Theorem \ref{ThB}.

\begin{Th}\label{ThBHankel}
    Let $\mathbb{B}$ be a UMD Banach space, $\lambda >0$ and $1<p<\infty$. Suppose that $\psi \in \mathcal{S}_\lambda (0,\infty )$
    is not identically zero and $\int_0^\infty x^\lambda \psi(x) dx=0$. Then, there exists $C>0$ such that
    $$\frac{1}{C}\|f\|_{L^p((0,\infty ),\mathbb{B})}
        \leq \|\mathcal{W}_{\psi ,\mathbb{B}}^\lambda (f)\|_{L^p((0,\infty ),\gamma (H,\mathbb{B}))}
        \leq C \|f\|_{L^p((0,\infty ),\mathbb{B})},$$
    for every $f\in L^p((0,\infty ),\mathbb{B})$.
\end{Th}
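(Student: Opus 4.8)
The plan is to reduce Theorem \ref{ThBHankel} to the Euclidean result Theorem \ref{ThB} via a transference argument, by relating the Hankel convolution on $(0,\infty)$ with the ordinary convolution on a suitable Euclidean space. The natural device is the identification, familiar from the radial analysis on $\R^n$, between the Hankel transform $h_\lambda$ and the Fourier transform acting on radial functions: when $n=2\lambda+1$ (not necessarily an integer, but this can be handled via the general Bochner--Hankel formalism, or by restricting first to $\lambda$ such that $n\in\N$ and then extending by analytic interpolation in $\lambda$), a radial function $F(x)=f(|x|)$ on $\R^n$ has Fourier transform which is again radial and governed by $h_\lambda$ of $f$ up to explicit powers of the radial variable. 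Under this dictionary the dilations $\psi_{(t)}$ match the Euclidean dilations $\psi_t$, the Hankel convolution $\psi_{(t)}\#_\lambda f$ matches $\psi_t * f$ on radial functions, and the measure $x^\lambda\,dx$ on $(0,\infty)$ corresponds (after the substitution absorbing the Jacobian $x^{n-1}$) to the measure appearing in $L^p((0,\infty),\B)$. Thus $\mathcal{W}_{\psi,\B}^\lambda$ becomes, modulo weights, the restriction of the Euclidean wavelet transform $\mathcal{W}_{\Psi}$ to radial $\B$-valued functions, where $\Psi$ is the radial extension of $\psi$.

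The key steps, in order, are as follows. First, I would record the precise correspondence: given $\psi\in\mathcal{S}_\lambda(0,\infty)$, set $\Psi(z)=c_\lambda\,|z|^{-\lambda}\psi(|z|)$ on $\R^n$ with $n=2\lambda+1$, verify that $\Psi\in L^2(\R^n)$ and that its Fourier transform $\widehat\Psi$ is the radial function built from $h_\lambda(\psi)$; the decay and smoothness of $\psi$ in $\mathcal{S}_\lambda(0,\infty)$ (equivalently of $h_\lambda(\psi)\in\mathcal{S}_\lambda(0,\infty)$, since $h_\lambda$ is an isomorphism there) must be translated into conditions (i) and (ii) of Theorem \ref{ThB}. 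Condition (i) — the weighted square-function bound on derivatives of $\widehat\Psi$ up to order $1+[n/r]$ — follows because $\widehat\Psi$ is a radial Schwartz-type function that, crucially, vanishes at the origin: the hypothesis $\int_0^\infty x^\lambda\psi(x)\,dx=0$ says exactly $h_\lambda(\psi)(0)=0$, which is what supplies integrability of $|\widehat\Psi(tz)|^2\,dt/t$ near $t=0$ and of the derivative versions as well (near $t=\infty$ the rapid decay of the Schwartz function does the job). Condition (ii), the uniform lower bound $\inf_{|z|=1}\int_0^\infty|\widehat\Psi(tz)|^2\,dt/t>0$, holds because $\widehat\Psi$ is radial, so the integral is independent of the direction $z$, and it is strictly positive since $\psi\not\equiv 0$ forces $h_\lambda(\psi)\not\equiv 0$, which by analyticity of $h_\lambda(\psi)$ on $(0,\infty)$ cannot vanish on a set of positive $dt/t$-measure.

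With these verifications in hand, I would apply Theorem \ref{ThB} to $\Psi$ to get the two-sided estimate
$$
\tfrac1C\|F\|_{L^p(\R^n,\B)}\le \|\mathcal{W}_\Psi(F)\|_{L^p(\R^n,\gamma(H,\B))}\le C\|F\|_{L^p(\R^n,\B)}
$$
for all $F\in\mathcal{S}(\R^n,\B)$, and then restrict to radial $F$, i.e. $F(z)=f(|z|)$ with $f\in\mathcal{S}_\lambda(0,\infty)\otimes\B$ (or a suitable dense subclass). Passing to polar coordinates converts the $L^p(\R^n,\cdot)$ norms into $L^p((0,\infty),x^{n-1}\,dx,\cdot)$ norms; absorbing the factor $x^{n-1}=x^{2\lambda}$ and the weights coming from the definition $\Psi(z)=c_\lambda|z|^{-\lambda}\psi(|z|)$ into a change of the unknown function $f\mapsto x^{\lambda}f$ yields precisely the estimate claimed in Theorem \ref{ThBHankel}, first on a dense subclass and then, by the density of $\{T_f\}$ in $\gamma(H,\B)$ together with the boundedness just obtained, on all of $L^p((0,\infty),\B)$. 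One must also check that the $\gamma(H,\B)$-valued object $\mathcal{W}_\Psi(F)(x,\cdot)$ restricted to radial $F$ really does coincide, as an element of $\gamma(H,\B)$, with $\mathcal{W}_{\psi,\B}^\lambda(f)(x,\cdot)$ — this is a pointwise-in-$x$ identity of operators $H\to\B$ and follows from the convolution identity $\Psi_t*F(z)=(\psi_{(t)}\#_\lambda f)(|z|)$ for radial $F$, together with the naturality of the construction $g\mapsto T_g$.

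The main obstacle I anticipate is twofold. The more technical issue is the non-integer dimension: $n=2\lambda+1$ need not be an integer, so Theorem \ref{ThB} as literally stated does not directly apply. I would handle this either by first proving the result for the discrete set of $\lambda$ with $2\lambda+1\in\N$ and then extending to all $\lambda>0$ by a complex-analytic interpolation in $\lambda$ (the family of operators $\mathcal{W}_{\psi,\B}^\lambda$ depends holomorphically on $\lambda$ through the explicit kernels), or, cleaner, by redoing the proof of Theorem \ref{ThB} directly in the Bessel variables — i.e. proving a Bessel analogue of the Kaiser--Weis estimate — in which case the Fourier-type hypothesis and the $[n/r]$-many derivatives are replaced by their Hankel counterparts and $n$ appears only as the parameter $2\lambda+1$, never as an integer. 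The second, more conceptual obstacle is the careful bookkeeping of the weight $x^\lambda$ and the normalization constants so that the dilations $\psi_{(t)}(x)=t^{-\lambda-1}\psi(x/t)$ — with exponent $\lambda+1$, not $\lambda$ — correctly correspond to Euclidean dilations $\Psi_t(z)=t^{-n}\Psi(z/t)$ with $n=2\lambda+1$; getting every power of $t$ and $x$ right is what makes the square-function normalization $dt/t$ survive the transference, and an error here would be fatal to the two-sided nature of the estimate.
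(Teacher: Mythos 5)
Your reduction to Theorem \ref{ThB} by lifting to radial functions on $\mathbb{R}^n$, $n=2\lambda+1$, has a gap that is fatal as stated: the measure does not transfer for $p\neq 2$. To convert the Hankel convolution into a Euclidean convolution you are forced to use the correspondence $F(z)=c|z|^{-\lambda}f(|z|)$ (only then do $\Psi_t\ast F$ and $\psi_{(t)}\#_\lambda f$ match, because the factor $(xy)^\lambda$ is built into the Hankel translation), and under this map
$\|f\|_{L^p((0,\infty),\mathbb{B})}^p=c\int_{\mathbb{R}^n}\|F(z)\|_{\mathbb{B}}^p\,|z|^{\lambda(p-2)}dz$,
i.e.\ the unweighted $L^p((0,\infty),dx)$ norm of the theorem corresponds to a power-weighted $L^p$ norm on $\mathbb{R}^n$, and likewise for the $\gamma(H,\mathbb{B})$-valued norms. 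Only for $p=2$ do the weights disappear. Your suggestion to ``absorb the weights by the change of unknown $f\mapsto x^\lambda f$'' cannot work, because that substitution destroys the convolution correspondence: one cannot simultaneously match the convolution structure and the measure. So Theorem \ref{ThB}, which is unweighted, does not yield the claimed two-sided inequality; you would need a weighted (radial power weight, $A_p$) version of the Kaiser--Weis theorem, including its lower bound, which is a substantial unproved ingredient. The second obstacle you flag, the non-integer dimension $n=2\lambda+1$, is also not resolved by what you propose: ``analytic interpolation in $\lambda$'' of a two-sided $L^p$ estimate is not justified (there is no holomorphic family argument available for these norms), and ``redoing Theorem \ref{ThB} in Bessel variables'' is essentially asking for a new proof rather than giving one.

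The paper avoids both problems by staying in one dimension: it treats $\Delta_\lambda$ as a perturbation of $-d^2/dx^2$ on $(0,\infty)$ with Lebesgue measure. Writing $\psi(x)=x^{\lambda}\phi(x^2)$ with $\phi\in\mathcal{S}(\mathbb{R})$, one forms the one-dimensional profile $\Phi(x)=c_\lambda\int_0^\infty u^{\lambda-1}\phi(x^2+u)\,du\in\mathcal{S}(\mathbb{R})$, whose Fourier transform vanishes at $0$ precisely because $\int_0^\infty x^\lambda\psi(x)dx=0$, so Theorem \ref{ThB} applies to $\mathcal{W}_{\Phi,\mathbb{B}}$ on $\mathbb{R}$. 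One then extends $f$ oddly to $\mathbb{R}$ and shows that the kernel difference $K_\lambda(t,x,y)={}_\lambda\tau_x(\psi_{(t)})(y)-\Phi_t(x-y)$ satisfies $\|K_\lambda(\cdot,x,y)\|_H\leq C/\max\{x,y\}$, so the discrepancy (and the reflected term) is controlled by the Hardy operators $H_0$, $H_\infty$, giving the upper bound in \eqref{Lp1} with no weights appearing. Crucially, the lower bound is not obtained by restricting a Euclidean two-sided estimate at all: it is proved by a polarization identity for the Hankel wavelet transform (Lemmas \ref{Lema1}--\ref{Lema3}) combined with duality, using that $\mathbb{B}^*$ is UMD and the $\gamma$-duality pairing of Hyt\"onen--Weis. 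If you want to rescue a transference-style argument you would have to prove a weighted, arbitrary-``dimension'' analogue of Theorem \ref{ThB} first; as written, your proof does not establish the theorem for any $p\neq 2$.
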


Harmonic analysis associated with Bessel operators was firstly analyzed by Muckenhoupt and Stein \cite{MS}.
Recently, that study has been completed (see \cite{BBFMT}, \cite{BCR1} and \cite{BFMT}).

The Poisson semigroup $\{P_t^\lambda \}_{t>0}$ associated to the operator $\Delta _\lambda $ is defined as follows:
$$P_t^\lambda (f)(x)
    =\int_0^\infty P_t^\lambda (x,y)f(y)dy,\quad t,x\in (0,\infty ),$$
for every $f\in L^p(0,\infty )$, $1\leq p<\infty$. The Poisson kernel $P_t^\lambda (x,y)$, $t,x,y\in (0,\infty )$, is defined by (see \cite{Wei})
$$P_t^\lambda (x,y)
    =\frac{2\lambda (xy)^ \lambda t}{\pi }\int_0^\pi \frac{(\sin \theta )^{2\lambda -1}}{((x-y)^2+t^2+2xy(1-\cos \theta ))^{\lambda +1}}d\theta ,\quad t,x,y\in (0,\infty ).$$
For every $t>0$, we can write
$$P_t^\lambda (f)
    =K_{(t)}^\lambda \# _\lambda f,\quad f\in L^p(0,\infty ), \ 1\leq p<\infty ,$$
where
$$K^\lambda (x)
    =\frac{2^{\lambda +1/2}\Gamma (\lambda +1)}{\sqrt{\pi }}\frac{x^\lambda }{(1+x^2)^{\lambda +1}},\quad x\in (0,\infty ).$$
Then, for every $k\in \mathbb{N}$ and $f\in L^p(0,\infty )$, $1<p<\infty $,
$$g_k(\{P_t^\lambda \}_ {t>0})(f)(x)
    =\left(\int_0^\infty |(t^k\partial ^k_tK_{(t)}^\lambda \# _\lambda f)(x)|^2\frac{dt}{t}\right)^{1/2},\quad x\in (0,\infty ).$$
$g$-functions in the Bessel setting were studied in \cite{BFS}.

In \cite{BFMT} it was considered the square function defined by
$$g_{1,\mathbb{B}}(\{P_t^\lambda \}_ {t>0})(f)(x)
    =\left(\int_0^\infty \|t\partial _tP_t^\lambda (f)(x)\|_\mathbb{B}^2\frac{dt}{t}\right)^{1/2},\quad x\in (0,\infty ),$$
for every $f\in L^p((0,\infty ),\mathbb{B})$, $1<p<\infty$. According to \cite[Theorems 2.4 and 2.5]{BFMT} and \cite{Kw}
we have that, $\mathbb{B}$ is isomorphic to a Hilbert space if, and only if, for some (equivalently, for every ) $1<p<\infty$,
there exists $C>0$ for which
$$\frac{1}{C}\|f\|_{L^p((0,\infty ),\mathbb{B})}
    \leq \|g_{1, \mathbb{B}}(\{P_t^\lambda \}_{t>0})(f)\|_{L^p(0,\infty )}
    \leq C \|f\|_{L^p((0,\infty ),\mathbb{B})},\quad f\in L^p((0,\infty ),\mathbb{B}).$$
Note that the semigroup $\{P_t^\lambda \}_{t>0}$ is not Markovian. Hence, the results in \cite{MTX} do not imply those in \cite{BFMT}.
Also the theory developed in \cite{Hy} do not apply for the Bessel Poisson semigroup.

In \cite{SW} Segovia and Wheeden defined a fractional derivative as follows. Suppose that
$F:\Omega \times (0,\infty )\longrightarrow \mathbb{C}$ is a good enough function, where
$\Omega \subset \mathbb{R}^n$, and $\beta >0$. The $\beta$-derivative $\partial_t^\beta F$ is defined by
$$\partial _t^\beta F(x,t)
    =\frac{e^{-i\pi (m-\beta)}}{\Gamma (m-\beta )}\int_0^\infty \partial_t^mF(x,t+s)s^{m-\beta -1}ds,\quad x\in \Omega, \ t\in (0,\infty ),$$
where $m\in \mathbb{N} $ and $m-1\leq \beta <m$. By using this fractional derivative, Segovia and Wheeden obtained characterizations of Sobolev spaces.

If $\mathbb{B}$ is a Banach space and $\beta >0$ we define the operator $G_{P,\mathbb{B}}^{\lambda ,\beta }$ by
$$G_{P,\mathbb{B}}^{\lambda ,\beta }(f)(x)
    =t^\beta \partial _t^\beta P_t^\lambda (f)(x),\quad t,x\in (0,\infty ),$$
for every $f\in L^p((0,\infty ),\mathbb{B})$, $1<p<\infty $.

We now prove that the operators $G_{P,\mathbb{B}}^{\lambda ,\beta }$ allow us
to get new equivalent norms in $L^p((0,\infty ),\mathbb{B})$ provided that $\mathbb{B}$ is a UMD space.
\begin{Th}\label{boundedness}
    Let $\mathbb{B}$ be a UMD Banach space, $\lambda ,\beta >0$ and $1<p<\infty$. Then, there exists $C>0$ such that
    \begin{equation}\label{7.1}
        \frac{1}{C}\|f\|_{L^p((0,\infty ),\mathbb{B})}
        \leq \|G_{P, \mathbb{B}}^{\lambda ,\beta }(f)\|_{L^p((0,\infty ),\gamma (H,\mathbb{B}))}
        \leq C \|f\|_{L^p((0,\infty ),\mathbb{B})},
    \end{equation}
    for every $f\in L^p((0,\infty ),\mathbb{B})$.
\end{Th}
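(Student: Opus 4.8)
The plan is to derive Theorem~\ref{boundedness} from the Hankel wavelet characterization in Theorem~\ref{ThBHankel} by realizing the fractional-derivative Poisson operator $G_{P,\B}^{\lambda,\beta}$ as a Hankel wavelet transform associated to a suitable $\psi\in\mathcal S_\lambda(0,\infty)$, possibly after a reduction that handles the ``low-frequency'' part separately. First I would compute the kernel of $t^\beta\partial_t^\beta P_t^\lambda$ as a Hankel convolution: since $P_t^\lambda(f)=K_{(t)}^\lambda\#_\lambda f$ and, by the subordination/interchange formula \eqref{6.1}, $h_\lambda(K_{(t)}^\lambda)(x)=x^{-\lambda}e^{-tx}$ (the Hankel multiplier of the Bessel--Poisson semigroup), one gets formally $h_\lambda(t^\beta\partial_t^\beta P_t^\lambda f)(x)=(tx)^\beta e^{-tx}\,x^{-\lambda}h_\lambda(f)(x)$ up to the constant $e^{-i\pi(m-\beta)}$ coming from the Segovia--Wheeden definition. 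Hence if I set $\psi$ to be the function with $h_\lambda(\psi)(x)=x^{-\lambda}\cdot c_\beta\, x^\beta e^{-x}$, i.e. essentially $\psi(x)=c_\beta\, h_\lambda(x^{\beta}e^{-x}\cdot x^{-\lambda}\cdot x^{\lambda})$ read as an element of $\mathcal S_\lambda$, then scaling gives $\psi_{(t)}\#_\lambda f$ with Hankel transform $(tx)^\beta e^{-tx}x^{-\lambda}h_\lambda f(x)$, and therefore $G_{P,\B}^{\lambda,\beta}(f)=\mathcal W_{\psi,\B}^\lambda(f)$ pointwise in $(t,x)$.

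The second step is to verify that this $\psi$ genuinely lies in $\mathcal S_\lambda(0,\infty)$ and satisfies the hypotheses of Theorem~\ref{ThBHankel}, namely $\psi\not\equiv0$ and $\int_0^\infty x^\lambda\psi(x)\,dx=0$. The first is immediate because $h_\lambda$ is injective on $\mathcal S_\lambda$ and $x^\beta e^{-x}\neq0$. For the moment condition, note that $\int_0^\infty x^\lambda\psi(x)\,dx$ is (a constant multiple of) $\lim_{x\to0^+}x^{-\lambda}h_\lambda(\psi)(x)\sim\lim_{x\to0^+}x^\beta e^{-x}$, which vanishes precisely because $\beta>0$; this is exactly where positivity of $\beta$ enters. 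That $x\mapsto x^\beta e^{-x}$, multiplied by the appropriate power to land in the Hankel-transform-of-$\mathcal S_\lambda$ class, actually produces a function in $\mathcal S_\lambda(0,\infty)$ requires checking the seminorm estimates $\eta_{m,k}^\lambda$; since $x^\beta e^{-x}$ is smooth away from $0$, decays rapidly at $\infty$, and behaves like $x^\beta$ near $0$, its Hankel transform has the decay and smoothness needed — this uses standard facts about $h_\lambda$ mapping Schwartz-type behavior to Schwartz-type behavior (cf. \cite[Lemma 8]{Ze}), together with known asymptotics of Bessel functions. When $\beta$ is not an integer one must be slightly careful that $x^\beta$ is still smooth enough after the $(1/x\,d/dx)^k$ operations act on $x^{-\lambda}\psi$; here the relevant combination $x^{-\lambda}\psi(x)=h_\lambda$-image handled intrinsically in the $\mathcal S_\lambda$ topology avoids any genuine obstruction.

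The remaining step is purely bookkeeping: once $G_{P,\B}^{\lambda,\beta}(f)=\mathcal W_{\psi,\B}^\lambda(f)$ with $\psi$ admissible, Theorem~\ref{ThBHankel} gives exactly the two-sided estimate \eqref{7.1}, with the constant $C$ absorbing the harmless factor $|e^{-i\pi(m-\beta)}|=1$ and the normalization constant $c_\beta$. I would also remark that the identity $G_{P,\B}^{\lambda,\beta}=\mathcal W_{\psi,\B}^\lambda$ should first be checked on a dense class (say $f\in\mathcal S_\lambda(0,\infty)\otimes\B$, where the Segovia--Wheeden integral defining $\partial_t^\beta$ converges absolutely and can be manipulated via $h_\lambda$), and then both sides extend continuously to $L^p((0,\infty),\B)$ — the left side because $\partial_t^\beta P_t^\lambda$ is controlled by the Poisson semigroup bounds, the right side by Theorem~\ref{ThBHankel} itself.

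I expect the main obstacle to be the technical verification that the function $\psi$ with $h_\lambda(\psi)(x)=c_\beta x^{\beta-\lambda}e^{-x}$ belongs to $\mathcal S_\lambda(0,\infty)$ for all $\beta>0$ (not merely integer $\beta$), i.e. controlling all the seminorms $\eta_{m,k}^\lambda(\psi)$; this amounts to precise small-$x$ and large-$x$ asymptotic analysis of an explicit Hankel-type integral $\int_0^\infty\sqrt{xy}J_{\lambda-1/2}(xy)\,y^{\beta-\lambda}e^{-y}\,dy$, using the series expansion of $J_{\lambda-1/2}$ near the origin and its oscillatory decay at infinity. A related subtlety worth addressing is justifying the Fubini-type interchange that turns the Segovia--Wheeden fractional derivative of $P_t^\lambda$ into multiplication by $(tx)^\beta e^{-tx}$ on the Hankel-transform side — this is where one pays attention to $m-1\le\beta<m$ and the convergence of $\int_0^\infty s^{m-\beta-1}\partial_t^m P_{t+s}^\lambda(f)\,ds$, most cleanly done by computing on $\mathcal S_\lambda$ and invoking that $h_\lambda$ is a topological isomorphism there.
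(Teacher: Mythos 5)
Your reduction hinges on realizing $G_{P,\B}^{\lambda,\beta}$ as $\mathcal{W}_{\psi,\B}^{\lambda}$ for an admissible $\psi$, and that is exactly where the argument breaks down. The formal identity is fine (it is essentially Lemma~\ref{Lem4}: $h_\lambda(t^\beta\partial_t^\beta P_t^\lambda f)(x)=e^{i\pi\beta}(tx)^\beta e^{-tx}h_\lambda(f)(x)$), but note first a bookkeeping slip: by \eqref{6.1} and $h_\lambda(P_t^\lambda f)=e^{-t\cdot}h_\lambda(f)$ one needs $h_\lambda(\psi)(\xi)=c_\beta\,\xi^{\lambda+\beta}e^{-\xi}$, not $\xi^{\beta-\lambda}e^{-\xi}$. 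The fatal point is that this $\psi$ is \emph{not} in $\mathcal{S}_\lambda(0,\infty)$, for any $\beta>0$. Since $h_\lambda$ is a topological isomorphism of $\mathcal{S}_\lambda(0,\infty)$, membership of $\psi$ is equivalent to membership of $\xi^{\lambda+\beta}e^{-\xi}$, i.e.\ to finiteness of all seminorms $\eta_{m,k}^\lambda$ of that function, which requires $\bigl(\tfrac{1}{\xi}\tfrac{d}{d\xi}\bigr)^k\bigl(\xi^{\beta}e^{-\xi}\bigr)$ to stay bounded as $\xi\to0^+$ for every $k$. This fails: for non-integer $\beta$ one gets a term of order $\xi^{\beta-2k}$ as soon as $2k>\beta$, and even for integer $\beta$ the factor $e^{-\xi}$ is not smooth as a function of $\xi^2$ at the origin (already $\bigl(\tfrac{1}{\xi}\tfrac{d}{d\xi}\bigr)^2(\xi^2e^{-\xi})\sim -3/\xi$). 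Equivalently, on the space side $\psi$ has only polynomial decay (compare the Poisson kernel $K^\lambda(x)\sim x^{-\lambda-2}$, whose Hankel transform is $\xi^\lambda e^{-\xi}$), so the rapid-decay seminorms $\eta_{m,0}^\lambda(\psi)$ are infinite. Hence the hypotheses of Theorem~\ref{ThBHankel} are not met and it cannot be invoked; the "main obstacle" you flagged is not a technicality but an actual obstruction, and the vague "handle the low-frequency part separately" is precisely the missing argument (the slow decay of $\psi$ at infinity is the spatial manifestation of the non-smoothness of $e^{-\xi}$ at $\xi=0$).

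For comparison, the paper avoids this by not using Theorem~\ref{ThBHankel} at all for the Poisson square function. The upper bound in \eqref{7.1} is obtained by transferring the corresponding Euclidean result for $t^\beta\partial_t^\beta\PP_t$ (from \cite{BCCFR1}, which likewise cannot be a direct corollary of the wavelet theorem, for the same reason) to the Bessel setting: one writes $t^\beta\partial_t^\beta P_t^\lambda(x,y)$ explicitly via \eqref{16.2}, reduces the half-line problem to $\R$ by odd extension, and shows that the difference kernel $t^\beta\partial_t^\beta P_t^\lambda(x,y)-t^\beta\partial_t^\beta\PP_t(x-y)$ has $H$-norm dominated by $1/\max\{x,y\}$, so the error operator is controlled by the Hardy operators $H_0$, $H_\infty$. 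The lower bound is then deduced from the upper bound for $\B^*$ via the polarization identity of Lemma~\ref{Lem5} (whose proof uses exactly the Hankel-multiplier identity of Lemma~\ref{Lem4} that you derived) together with the duality pairing $\gamma(H,\B^*)\times\gamma(H,\B)$. If you want to keep your plan, you would have to either prove a version of Theorem~\ref{ThBHankel} for wavelets with only polynomial decay and limited smoothness at the origin of the multiplier, or carry out a decomposition-plus-comparison argument of the type the paper uses; as written, the proposal does not prove the theorem.
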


For every $\lambda>0$ the Poisson semigroup $\{P_t^\lambda\}_{t>0}$ is generated by $-\sqrt{\Delta_\lambda}$ in
$L^p(0,\infty)$, $1<p<\infty$. According to \cite[Proposition 6.1]{NoSt3},
 $\{P_t^\lambda\}_{t>0}$ is contractive in
$L^p(0,\infty)$, $1<p<\infty$, provided that $\lambda \geq 1$. Then, by \cite[Theorem 6.1]{Tag}, equivalence \eqref{7.1}
follows from \cite[Proposition 2.16]{NVW2} (see also \cite[Lemma 2.3]{Maa}) when $\lambda \geq 1$, $\beta>0$, $1<p<\infty$
and $\B$ is a UMD Banach space. In Theorem~\ref{boundedness}, \eqref{7.1} is established for every $\lambda>0$. Our proof
(see Section~\ref{sec:Proof2}) does not use functional calculus arguments. We exploit the fact that the
Bessel operator $\Delta_\lambda$ is, in some sense, a nice perturbation of the Laplacian operator $-d^2/dx^2$.
We connect the $g$-function operator $G_{P, \mathbb{B}}^{\lambda ,\beta }$ with the corresponding operator associated
with the classical Poisson semigroup and then we apply Theorem~\ref{ThB}.

We also consider square functions associated with Bessel Poisson semigroups involving derivative with respect to $x$.
If $\mathbb{B}$ is a Banach space we define, for every $f\in L^p((0,\infty ),\mathbb{B})$, $1<p<\infty$,
$$ \mathcal{G}_{P,\mathbb{B}}^\lambda (f)(t,x)
    =tD_{\lambda ,x}^* P_t^{\lambda +1}(f)(x),\quad x,t\in (0,\infty ),$$
where $D_\lambda ^*=-x^{-\lambda }\frac{d}{dx}x^\lambda $.

\begin{Th}\label{boundedness2}
    Let $\mathbb{B}$ be a UMD Banach space, $\lambda >0$ and $1<p<\infty$. Then, the operator
    $\mathcal{G}_{P,\mathbb{B}}^\lambda $ is bounded from $L^p((0,\infty ),\mathbb{B})$ into
    $L^p((0,\infty ),\gamma(H,\mathbb{B}))$.
\end{Th}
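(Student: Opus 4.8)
The plan is to reduce the operator $\mathcal{G}_{P,\mathbb{B}}^\lambda$ to a Hankel wavelet transform of the type controlled by Theorem~\ref{ThBHankel}, or — if a clean reduction fails — to mimic the perturbative strategy announced for Theorem~\ref{boundedness}, comparing $\mathcal{G}_{P,\mathbb{B}}^\lambda$ with the analogous operator for the classical Poisson semigroup and invoking Theorem~\ref{ThB}. The first step is to compute the kernel of $t D_{\lambda,x}^* P_t^{\lambda+1}$ explicitly. Writing $P_t^{\lambda+1}(f) = K_{(t)}^{\lambda+1}\#_{\lambda+1} f$ and applying $D_\lambda^* = -x^{-\lambda}\frac{d}{dx}x^\lambda$, one uses the standard Bessel identities relating $\Delta_\lambda$, $\Delta_{\lambda+1}$ and the operators $D_\lambda = x^\lambda\frac{d}{dx}x^{-\lambda}$, $D_\lambda^*$ (in particular $D_{\lambda,x}^*$ applied to $P_t^{\lambda+1}(x,y)$ should produce, up to harmless factors, a kernel expressible through $P_t^\lambda$ or through $\partial_x$ of a Poisson-type kernel). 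The goal of this computation is to exhibit $\mathcal{G}_{P,\mathbb{B}}^\lambda(f)(t,x) = (\Phi_{(t)}\#_\lambda f)(x)$ for a fixed function $\Phi$, and then to check that $\Phi \in \mathcal{S}_\lambda(0,\infty)$ and satisfies a cancellation condition; here the derivative in $x$ is precisely what supplies the vanishing moment $\int_0^\infty x^\lambda \Phi(x)\,dx = 0$ needed to apply Theorem~\ref{ThBHankel}.

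If the kernel is not literally of convolution form in the $\lambda$-Hankel sense, the fallback is the comparison argument. One writes $P_t^{\lambda+1}(x,y)$ in terms of the classical Poisson kernel $\mathbb{P}_t(x-y)$ plus a remainder that is more regular and decays faster, exploiting that the Bessel kernel differs from the Euclidean one by lower-order terms in the weight $x^\lambda$; this is the sense in which $\Delta_\lambda$ is a perturbation of $-d^2/dx^2$. The principal part, after applying $tD_{\lambda,x}^*$ and stripping weights, becomes $t\,\partial_x \mathbb{P}_t(x-y)$, i.e.\ the classical conjugate Poisson square function, which is handled by Theorem~\ref{ThB} (take $\psi$ with $\widehat\psi$ of the form $z\mapsto -i z e^{-|z|}$, verify (i)–(ii)); the remainder term is estimated directly by showing its $\gamma(H,\mathbb{B})$-valued kernel is dominated by a Hardy-type or local/global kernel that is bounded on $L^p((0,\infty),\mathbb{B})$ via scalar domination and the $\mathbb{R}$-boundedness available in UMD spaces.

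The main obstacle I anticipate is controlling the action of the first-order operator $D_{\lambda,x}^*$ on $P_t^{\lambda+1}(x,y)$ near the diagonal $x=y$ and near the boundary $x=0$: differentiating in $x$ lowers the kernel's decay and introduces a singularity of the form $(|x-y|+t)^{-2}$ plus terms carrying factors $x^{-1}$ or $y^\lambda/x^\lambda$ from the weight manipulation, which must be shown to either cancel (producing the cancellation condition) or to be absorbed into an operator bounded on $L^p((0,\infty),\mathbb{B})$. Establishing the lower bound in the equivalence — or rather, here only the upper bound is asserted, so the genuine difficulty is simply the boundedness — requires a careful split into the "local" region $|x-y|<x/2$, where the Bessel and Euclidean kernels are comparable and Theorem~\ref{ThB} applies after a change of variables, and the "global" region, where one uses the explicit size estimates of $P_t^\lambda(x,y)$ together with an $L^p$-bounded positive majorizing kernel. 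Once these two regions are dispatched, the $\gamma$-norm bound follows by combining the Kahane–Khintchine inequalities with Minkowski's integral inequality exactly as in the proofs of the previous theorems.
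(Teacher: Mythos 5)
Your primary route cannot work as stated: $\mathcal{G}_{P,\B}^\lambda$ is \emph{not} a Hankel convolution operator of order $\lambda$ with a fixed profile $\Phi\in\mathcal{S}_\lambda(0,\infty)$. On the transform side, $h_\lambda\big(D_{\lambda}^* P_t^{\lambda+1}(f)\big)(y)=\pm\, y e^{-ty}\,h_{\lambda+1}(f)(y)$, i.e.\ the operator mixes $h_{\lambda+1}$ and $h_\lambda$; by the interchange formula \eqref{6.1} a $\#_\lambda$-convolution would have to be an $h_\lambda$-multiplier acting on $h_\lambda(f)$, which this is not. What is really hidden inside is the adjoint Riesz transform: by the Cauchy--Riemann relations, $\partial_t P_t^\lambda(R_\lambda^* f)=D_\lambda^* P_t^{\lambda+1}(f)$ (equation \eqref{Lp23}), so $\mathcal{G}_{P,\B}^{\lambda}=G_{P,\B}^{\lambda,1}\circ R_\lambda^*$ on $\mathcal{S}_\lambda(0,\infty)\otimes\B$. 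This is exactly how the paper proves the theorem: $R_\lambda^*$ extends boundedly to $L^p((0,\infty),\B)$ because $\B$ is UMD (\cite[Theorem 2.1]{BFMT}), Theorem~\ref{boundedness} bounds $G_{P,\B}^{\lambda,1}$, and the only kernel work needed is a truncated estimate $\|\mathcal{G}_{P,\B}^\lambda(\cdot,x,y)\|_{L^2((\varepsilon,\infty),dt/t)}\in L^{p'}(dy)$ used to identify the continuous extension with $\mathcal{G}_{P,\B}^\lambda$ a.e., as in Section~\ref{subsec:2.1}. Your plan never isolates this factorization, and the vanishing-moment heuristic ("the $x$-derivative supplies $\int_0^\infty x^\lambda\Phi=0$") has no object $\Phi$ to apply to.

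Your fallback (compare with the classical conjugate Poisson square function $t\partial_x\PP_t$ and invoke Theorem~\ref{ThB}) is a legitimate alternative in the spirit of Section~\ref{subsec:3.1}, but as sketched it has a concrete gap: the stated domination of the remainder by a Hardy-type kernel of size $C/\max\{x,y\}$ fails. Since $D_{\lambda,x}^*=-\partial_x-\lambda/x$, the difference kernel contains the term $\frac{\lambda t}{x}P_t^{\lambda+1}(x,y)$, whose $H$-norm in the local region $x/2<y<2x$ behaves like $x^{-1}\big(1+\log\frac{x}{|x-y|}\big)^{1/2}$, not $C/x$; it can still be handled (e.g.\ by a Schur-type test, since its integrals in $y$ and in $x$ over the local region are uniformly bounded), but this is not the estimate you propose, and none of the remaining comparisons (the $\theta$-integral approximations after an extra $x$-derivative, the reflection term from the odd/even extension, the identification of the extension on all of $L^p((0,\infty),\B)$) is carried out. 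So the proposal, as written, does not constitute a proof: its main route is structurally blocked, and its fallback is an unexecuted plan whose key kernel bound is incorrect as stated, whereas the paper's argument reduces everything to Theorem~\ref{boundedness} plus the known UMD-boundedness of the Bessel Riesz transform.
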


The operators $G_{P,\mathbb{B}}^{\lambda ,1}$ and $\mathcal{G}_{P,\mathbb{B}}^\lambda$ are connected by certain Cauchy-Riemann
type equations and Riesz transforms associated with Bessel operators. These relations allow us to get new
characterizations of UMD Banach spaces. Also, the equivalence of $L^p$-norms in Theorem \ref{boundedness}
characterizes UMD Banach spaces. In order to see this last property we need first to describe UMD Banach
spaces by using $L^p$-boundedness of the imaginary power $\Delta_\lambda^{i\omega}$, $\omega \in \mathbb{R}\setminus\{0\}$,
of Bessel operators (see Proposition~\ref{ImagBess}).

\begin{Th}\label{Th1.4}
    Let $\mathbb{B}$ be a Banach space and $\lambda >0$. The following assertions are equivalent.

    (i) $\mathbb{B}$ is UMD.

    (ii) For some (equivalently, for every) $1<p<\infty$, there exists $C>0$ such that,
    \begin{equation}\label{A2}
        \frac{1}{C} \|f\|_{L^p((0,\infty ),\mathbb{B})}
            \leq \|G_{P, \mathbb{B}}^{\lambda ,1 }(f)\|_{L^p((0,\infty ),\gamma (H,\mathbb{B}))}, \quad f\in L^p(0,\infty )\otimes \mathbb{B},
    \end{equation}
    and
    \begin{equation}\label{A3}
        \|\mathcal{G}_{P, \mathbb{B}}^{\lambda }(f)\|_{L^p((0,\infty ),\gamma (H,\mathbb{B}))}
            \leq C \|f\|_{L^p((0,\infty ),\mathbb{B})},  \quad f\in L^p(0,\infty )\otimes \mathbb{B}.
    \end{equation}

    (iii) For some (equivalently, for every) $1<p<\infty$ and $\beta >0$, there exists $C>0$ such that, for $\delta =\beta$ and $\delta =\beta +1$,
    \begin{equation}\label{A1}
        \frac{1}{C}\|f\|_{L^p((0,\infty ),\mathbb{B})}
            \leq \|G_{P, \mathbb{B}}^{\lambda ,\delta }(f)\|_{L^p((0,\infty ),\gamma (H,\mathbb{B}))}
            \leq C \|f\|_{L^p((0,\infty ),\mathbb{B})},  \quad f\in L^p(0,\infty )\otimes \mathbb{B}.
    \end{equation}
\end{Th}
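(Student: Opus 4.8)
The plan is to prove Theorem~\ref{Th1.4} by establishing the chain of implications (i)$\Rightarrow$(iii)$\Rightarrow$(ii)$\Rightarrow$(i), since (iii) is the most robust family of estimates to deduce from UMD via Theorem~\ref{boundedness}, and (ii) is the easiest self-contained bridge back to the Hilbert transform. The forward direction (i)$\Rightarrow$(iii) is immediate: assuming $\B$ is UMD, Theorem~\ref{boundedness} gives exactly the two-sided estimate \eqref{A1} for both $\delta=\beta$ and $\delta=\beta+1$, simultaneously for all $\beta>0$ and all $1<p<\infty$ (the restriction to $f\in L^p(0,\infty)\otimes\B$ only weakens the hypothesis). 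Similarly (i)$\Rightarrow$(ii) follows because Theorem~\ref{boundedness} with $\beta=1$ yields the lower bound \eqref{A2}, and Theorem~\ref{boundedness2} yields the upper bound \eqref{A3}. So the substantive content is the two \emph{converse} implications, where UMD must be extracted from the assumed square-function inequalities.

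For (iii)$\Rightarrow$(i) I would argue as follows. The hypothesis \eqref{A1} for $\delta=\beta$ provides a lower estimate $\|f\|_{L^p}\lesssim \|G_{P,\B}^{\lambda,\beta}(f)\|_{L^p(\gamma(H,\B))}$, and for $\delta=\beta+1$ provides an upper estimate $\|G_{P,\B}^{\lambda,\beta+1}(f)\|_{L^p(\gamma(H,\B))}\lesssim\|f\|_{L^p}$. The key is to relate $G_{P,\B}^{\lambda,\beta}$ and $G_{P,\B}^{\lambda,\beta+1}$ through the imaginary power $\Delta_\lambda^{i\omega}$: since $t^\beta\partial_t^\beta P_t^\lambda = m_\beta(t\sqrt{\Delta_\lambda})$ for an appropriate bounded holomorphic symbol, composing the lower bound for exponent $\beta$ with the upper bound for exponent $\beta+1$ and inserting $\Delta_\lambda^{i\omega}$ (which commutes with the Poisson semigroup and only shifts the symbols by a bounded unimodular factor $t^{2i\omega}$, harmless in the $\gamma(H,\B)$-norm by the ideal property and Fourier-multiplier invariance of $H=L^2(dt/t)$ under dilation) yields that $\Delta_\lambda^{i\omega}$ is bounded on $L^p((0,\infty),\B)$. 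This is precisely the hypothesis of Proposition~\ref{ImagBess}, which characterizes UMD by $L^p$-boundedness of $\Delta_\lambda^{i\omega}$; hence $\B$ is UMD. In practice I would formulate a small lemma: the operator $G_{P,\B}^{\lambda,\beta+1}\circ (G_{P,\B}^{\lambda,\beta})^{-1}$ on the range, together with the two-sided bounds, realizes $\Delta_\lambda^{1/2}$-type smoothing that combined with the imaginary-power trick produces $\Delta_\lambda^{i\omega}$; alternatively, and more cleanly, iterate \eqref{A1} at two different values of $\beta$ to directly produce the fractional integral and then the imaginary power.

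For (ii)$\Rightarrow$(i) I would exploit the Cauchy–Riemann relations mentioned in the text just before the theorem, linking $G_{P,\B}^{\lambda,1}(f)=t\partial_t P_t^\lambda(f)$ with $\mathcal{G}_{P,\B}^\lambda(f)=tD_{\lambda,x}^*P_t^{\lambda+1}(f)$ via the Riesz transform $R_\lambda$ associated with $\Delta_\lambda$, schematically $D_{\lambda,x}^*P_t^{\lambda+1}=R_\lambda(\partial_t P_t^\lambda)$ up to intertwining the parameter $\lambda\mapsto\lambda+1$. Then \eqref{A2} gives $\|f\|_{L^p}\lesssim\|t\partial_t P_t^\lambda f\|_{L^p(\gamma(H,\B))}$, and \eqref{A3} gives $\|tD_{\lambda,x}^*P_t^{\lambda+1}f\|_{L^p(\gamma(H,\B))}\lesssim\|f\|_{L^p}$; composing through the Cauchy–Riemann identity shows that $R_\lambda$ (hence, after the standard transference between Bessel Riesz transforms and a Hilbert-transform-type operator on $(0,\infty)$) is bounded on $L^p((0,\infty),\B)$, which forces $\B$ to be UMD. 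Concretely one writes $R_\lambda f = $ (lower bound applied to $R_\lambda f$) $\lesssim \|t\partial_t P_t^\lambda R_\lambda f\|_{\gamma}=\|tD_{\lambda,x}^*P_t^{\lambda+1}f\|_{\gamma}\lesssim\|f\|$.

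\textbf{The main obstacle} I anticipate is making the Cauchy–Riemann/Riesz-transform bookkeeping precise in the $\gamma(H,\B)$-valued setting: one must justify that the pointwise-in-$x$ identities relating the two $g$-functions lift to identities of $\gamma$-radonifying-valued functions (using that $\gamma(H,\cdot)$ has the ideal property under bounded operators on $H$, and that differentiation/subordination act as bounded $H$-multipliers), and that the intertwining of the parameters $\lambda$ and $\lambda+1$ in $P_t^{\lambda+1}$ versus $P_t^\lambda$ does not obstruct the argument — this is exactly why the theorem pairs $\delta=\beta$ with $\delta=\beta+1$ and uses $P_t^{\lambda+1}$ in $\mathcal{G}_{P,\B}^\lambda$. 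Once these algebraic relations are established rigorously (most likely by testing against $S\in\B^*$ and reducing to scalar Hankel-transform identities, then invoking density of $L^p(0,\infty)\otimes\B$), the reduction to Proposition~\ref{ImagBess} and to the definition of UMD via the Hilbert transform is routine.
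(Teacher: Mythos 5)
Your skeleton is the same as the paper's: (i)$\Rightarrow$(ii),(iii) from Theorems~\ref{boundedness} and \ref{boundedness2}; (ii)$\Rightarrow$(i) through the Cauchy--Riemann identity linking $\mathcal{G}_{P,\B}^{\lambda}$ with $G_{P,\B}^{\lambda,1}$ and a Bessel Riesz transform, concluding UMD from the boundedness of that Riesz transform; (iii)$\Rightarrow$(i) by showing $\Delta_\lambda^{i\omega}$ is bounded on $L^p((0,\infty),\B)$ and invoking Proposition~\ref{ImagBess}. However, in (iii)$\Rightarrow$(i) there is a genuine gap: your justification that inserting $\Delta_\lambda^{i\omega}$ is harmless because it ``only shifts the symbols by a bounded unimodular factor $t^{2i\omega}$'', absorbed by dilation invariance of $H$, is not correct. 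On the Hankel-transform side the imaginary power contributes the factor $y^{2i\omega}$ in the \emph{frequency} variable; it is not an operator acting on the $t$-variable, and if you force it to be one by writing $y^{2i\omega}=t^{-2i\omega}(ty)^{2i\omega}$, the resulting $g$-function has kernel $(ty)^{\beta+2i\omega}e^{-ty}$, which is not among the operators controlled by hypothesis (iii). The step that actually makes the argument work (your unproved ``small lemma'') is the identity, proved in the paper via Lemma~\ref{Lem4} and the Laplace representation $y^{2i\omega-1}=\int_0^\infty e^{-ys}s^{-2i\omega}\,ds/\G(1-2i\omega)$, that $G_{P,\B}^{\lambda,\beta}(\Delta_\lambda^{i\omega}f)(\cdot,x)=-G_{P,\B}^{\lambda,\beta+1}(f)(\cdot,x)\circ T_{\omega,\beta}$ for a.e.\ $x$, where $T_{\omega,\beta}h(t)=t^{-\beta}\int_0^t(t-s)^{\beta-1}h(t-s)\,s^{-2i\omega}\G(1-2i\omega)^{-1}ds$, together with the verification (Jensen's inequality) that $T_{\omega,\beta}$ is bounded on $H=L^2((0,\infty),dt/t)$; only then do the ideal property of $\gamma(H,\B)$, the lower bound at $\delta=\beta$ applied to $\Delta_\lambda^{i\omega}f$, and the upper bound at $\delta=\beta+1$ yield $\|\Delta_\lambda^{i\omega}f\|_{L^p((0,\infty),\B)}\le C\|f\|_{L^p((0,\infty),\B)}$. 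This explains structurally why both exponents $\beta$ and $\beta+1$ appear, and without identifying this transfer operator your composition has no precise meaning. Note also that the paper proves Proposition~\ref{ImagBess} as part of this very implication (a kernel comparison between $\Delta_\lambda^{i\omega}$ and $(-d^2/dx^2)^{i\omega}$ using heat-kernel asymptotics, Hardy operators, and Guerre-Delabri\`ere's theorem), whereas you take it as given.

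Your (ii)$\Rightarrow$(i) is essentially the paper's argument and is fine in outline; only be careful that the relevant identity is $\partial_t P_t^\lambda(R_\lambda^* f)=D_\lambda^* P_t^{\lambda+1}(f)$, so it is the adjoint Riesz transform $R_\lambda^*$, inserted \emph{inside} the $g$-function at parameter $\lambda$ (i.e.\ $\mathcal{G}_{P,\B}^{\lambda}(f)=G_{P,\B}^{\lambda,1}(R_\lambda^*f)$ on $\mathcal{S}_\lambda(0,\infty)\otimes\B$), whose $L^p((0,\infty),\B)$-boundedness one obtains; the conclusion that $\B$ is UMD then follows directly from the characterization of UMD by Bessel Riesz transforms (\cite[Theorem 2.1]{BFMT}), with no further transference to a Hilbert-transform-type operator needed.
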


Inspired in \cite[Theorem 1]{Me1} as application of the result in Theorem \ref{boundedness}
we give sufficient conditions in order that spectral multipliers associated with Bessel
operators are bounded in $L^p((0,\infty ),\mathbb{B})$, $1<p<\infty$.

If $f\in \mathcal{S}_\lambda (0,\infty)$ from \eqref{BesselJ} we deduce that
$$h_\lambda (\Delta _\lambda f)(x)
    =x^2h_\lambda (f)(x),\quad x\in (0,\infty ).$$
We define
$$\Delta _\lambda f
    =h_\lambda (x^2h_\lambda (f)),\quad f\in D(\Delta _\lambda ),$$
where the domain $D(\Delta _\lambda )$ of $\Delta _\lambda $ is
$$D(\Delta _\lambda )
    =\{f\in L^2(0,\infty ): x^2h_\lambda (f)\in L^2(0,\infty )\}.$$
Suppose that $m\in L^\infty (0,\infty )$. The spectral multiplier $m(\Delta _\lambda )$ is defined by
\begin{equation}\label{M1}
    m(\Delta _\lambda )(f)
        =h_\lambda (m(x^2)h_\lambda ),\quad f\in L^2(0,\infty ).
\end{equation}
Since $h_\lambda $ is bounded in $L^2(0,\infty )$, it is clear that $m(\Delta _\lambda)$ is
bounded from $L^2(0,\infty )$ into itself. At this point the question is to give
conditions on the function $m$ which imply that the operator $m(\Delta _\lambda )$ can be
extended from $L^2(0,\infty )\cap L^p(0,\infty )$ to $L^p(0,\infty )$ as a bounded operator
from $L^p(0,\infty )$ into itself for some $p\in (1,\infty )\setminus\{2\}$.

In \cite{BCC1} and \cite{BMR} Laplace transform type Hankel multipliers were investigated. A function $m$ is called of Laplace transform type when
$$m(y)
    =y\int_0^\infty e^{-yt}\psi (t)dt,\quad y\in (0,\infty ),$$
for some $\psi\in L^\infty (0,\infty )$. If $m$ is of Laplace transform type then the operator
$m(\Delta _\lambda )$ defined in \eqref{M1} can be extended to $L^p(0,\infty )$ as a bounded operator
from $L^p(0,\infty )$ into itself, $1<p<\infty$, and from $L^1(0,\infty )$ into $L^{1,\infty }(0,\infty )$ (\cite{BCC1}, \cite{BMR} and \cite[p. 121]{Ste1}).

Let $\omega \in \mathbb{R}\setminus\{0\}$. The imaginary power $\Delta_\lambda ^{i\omega}$ of $\Delta _\lambda$ is defined by
$$\Delta_\lambda^{i\omega}(f)
    =h_\lambda (y^{2i \omega}h_\lambda (f)),\quad f\in L^2(0,\infty ).$$
Since
$$y^{ i \omega}
    =y\int_0^\infty e^{-yt}\frac{t^{-i\omega}}{\Gamma (1-i \omega )}dt, \quad y\in (0,\infty ),$$
the operator $\Delta_\lambda^{i\omega}$ is a Laplace transform type Hankel multiplier.

In Proposition~\ref{ImagBess} (Section~\ref{sec:Proof4}) we show that a Banach space $\mathbb{B}$ is
UMD if, and only if, the operator $\Delta_\lambda^{i\omega}$, $\omega \in \mathbb{R}$,
is a bounded operator from $L^p((0,\infty ),\mathbb{B})$ into itself, for some (equivalently, for every) $1<p<\infty$.
This is a Bessel version of \cite[Theorem, p. 402]{Gue}.

In the following theorem we establish a Banach valued version of \cite[Theorem 1]{Me1} for the Bessel operator.

If $m\in L^\infty (0,\infty )$ we define, for every $n\in \mathbb{N}$,
$$m_n(t,y)
    =(ty)^ne^{-ty/2}m(y^2),\quad t,y \in (0,\infty ),$$
and $\mathcal{M}_n(t,u)$, $t\in (0,\infty )$, $u\in\mathbb{R}$, represents the Mellin transform of $m_n$ with respect to the variable $y$, that is,
$$\mathcal{M}_n(t,u)
    =\int_0^\infty m_n(t,y)y^{-iu-1}dy,\quad u\in \mathbb{R} \text{ and } t>0.$$

\begin{Th}\label{Th1.5}
    Let $\mathbb{B}$ be a UMD Banach space, $\lambda >0$ and $m \in L^\infty(0,\infty)$.
    Suppose that for some $1<p<\infty$ and $n\in \mathbb{N}$ the following property holds
    \begin{equation}\label{M2}
        \int_\R \sup _{t>0}|\mathcal{M}_n(t,u)|\|\Delta_\lambda ^{iu/2}\|_{L^p((0,\infty ),\mathbb{B})\rightarrow L^p((0,\infty ),\mathbb{B})}du<\infty .
    \end{equation}
    Then, $m(\Delta _\lambda )$ can be extended from $\mathcal{S}_\lambda (0,\infty )\otimes \mathbb{B}$ to $L^p((0,\infty ),\mathbb{B})$
    as a bounded operator from $L^p((0,\infty ),\mathbb{B})$ into itself.
\end{Th}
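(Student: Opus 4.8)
The plan is to follow Meda's scheme from \cite{Me1}, but carried out in the Bochner space $L^p((0,\infty),\mathbb{B})$ and with the square function estimates of Theorem~\ref{boundedness} playing the role that Stein's classical $g$-function inequality plays in the scalar case. The starting point is the subordination-type identity that expresses the multiplier $m(\Delta_\lambda)$ through the Poisson semigroup and its fractional derivatives. Fix $n\in\mathbb{N}$ as in the hypothesis. Since $m_n(t,y)=(ty)^n e^{-ty/2}m(y^2)$, the Mellin inversion formula in the variable $y$ gives
\begin{equation}\label{MellinInv}
    m_n(t,y)=\frac{1}{2\pi}\int_\R \mathcal{M}_n(t,u)\, y^{iu}\,du,\quad t,y\in(0,\infty),
\end{equation}
and hence, writing $y\rightsquigarrow \sqrt{\Delta_\lambda}$ via the Hankel functional calculus,
$$(t\sqrt{\Delta_\lambda})^n e^{-t\sqrt{\Delta_\lambda}/2}\,m(\Delta_\lambda)
    =\frac{1}{2\pi}\int_\R \mathcal{M}_n(t,u)\,\Delta_\lambda^{iu/2}\,du,\quad t>0.$$
The operator on the left is, up to the constant $(-2)^n$ and after the obvious rescaling $t\mapsto 2t$, nothing but $t^n\partial_t^n P_t^\lambda$ composed with $m(\Delta_\lambda)$; that is, it is $G_{P,\mathbb{B}}^{\lambda,n}\big(m(\Delta_\lambda)f\big)$ evaluated along the semigroup at time $2t$. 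The idea is therefore to recover $m(\Delta_\lambda)f$ from this family by means of the lower bound in Theorem~\ref{boundedness} with $\beta=n$, and to control the family itself by the upper bound in Theorem~\ref{boundedness} applied to $f$ together with \eqref{M2}.

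Concretely, the steps are as follows. First, I would check that $m(\Delta_\lambda)$ is well defined on $\mathcal{S}_\lambda(0,\infty)\otimes\mathbb{B}$ and that all the manipulations below are legitimate on this dense class, using that $h_\lambda$ is an isomorphism of $\mathcal{S}_\lambda(0,\infty)$ and that each $\Delta_\lambda^{iu/2}$ acts boundedly on it. Second, for $f\in\mathcal{S}_\lambda(0,\infty)\otimes\mathbb{B}$, apply the left-hand inequality of \eqref{7.1} in Theorem~\ref{boundedness} with $\beta=n$ to the function $m(\Delta_\lambda)f$:
$$\|m(\Delta_\lambda)f\|_{L^p((0,\infty),\mathbb{B})}
    \leq C\big\|t^n\partial_t^n P_t^\lambda\big(m(\Delta_\lambda)f\big)\big\|_{L^p((0,\infty),\gamma(H,\mathbb{B}))}.$$
Third, substitute the Mellin representation: pointwise in $x$, the $\gamma(H,\mathbb{B})$-element $t\mapsto t^n\partial_t^n P_t^\lambda(m(\Delta_\lambda)f)(x)$ equals a constant times $\int_\R \mathcal{M}_n(t/2,u)\,(\Delta_\lambda^{iu/2}f)(x)\,du$. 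Using the ideal property of $\gamma$-radonifying operators — namely that pointwise multiplication by a bounded scalar function $t\mapsto \mathcal{M}_n(t/2,u)$ is bounded on $\gamma(H,\mathbb{B})$ with norm $\sup_{t>0}|\mathcal{M}_n(t,u)|$ — together with the Minkowski-type (triangle) inequality for the $\gamma$-norm under the integral $\int_\R\cdots\,du$, I obtain
$$\big\|t^n\partial_t^n P_t^\lambda\big(m(\Delta_\lambda)f\big)(x)\big\|_{\gamma(H,\mathbb{B})}
    \leq C\int_\R \sup_{t>0}|\mathcal{M}_n(t,u)|\;\big\|g(x,\cdot)\big\|_{\gamma(H,\mathbb{B})}\,du,$$
where $g(x,t)=t^n\partial_t^n P_t^\lambda(\Delta_\lambda^{iu/2}f)(x)$ is the $G$-function of $\Delta_\lambda^{iu/2}f$; here I keep the $\gamma$-norm of a fixed normalized profile so that the $t$-dependence is absorbed correctly. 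Fourth, take $L^p((0,\infty),dx)$ norms, use Minkowski's integral inequality in $x$ against $\int_\R du$, and apply the right-hand inequality of \eqref{7.1} to $\Delta_\lambda^{iu/2}f$ to get
$$\big\|G_{P,\mathbb{B}}^{\lambda,n}(\Delta_\lambda^{iu/2}f)\big\|_{L^p((0,\infty),\gamma(H,\mathbb{B}))}
    \leq C\,\|\Delta_\lambda^{iu/2}f\|_{L^p((0,\infty),\mathbb{B})}
    \leq C\,\|\Delta_\lambda^{iu/2}\|_{L^p\to L^p}\,\|f\|_{L^p((0,\infty),\mathbb{B})}.$$
Combining, $\|m(\Delta_\lambda)f\|_{L^p((0,\infty),\mathbb{B})}\leq C\big(\int_\R\sup_{t>0}|\mathcal{M}_n(t,u)|\,\|\Delta_\lambda^{iu/2}\|_{L^p\to L^p}\,du\big)\|f\|_{L^p((0,\infty),\mathbb{B})}$, which is finite by \eqref{M2}. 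A density argument then extends $m(\Delta_\lambda)$ to all of $L^p((0,\infty),\mathbb{B})$.

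The main obstacle is the third step: making rigorous the passage from the functional-calculus identity \eqref{MellinInv} to a genuine, fibrewise $\gamma(H,\mathbb{B})$-valued identity, and the interchange of the vector-valued integral $\int_\R \cdots du$ with the $\gamma$-norm. One must verify that the scalar kernel $\mathcal{M}_n(t,u)$ really does act as a pointwise multiplier on the Hilbert space $H=L^2((0,\infty),dt/t)$ after the change of variables, that the resulting operator-valued function of $u$ is strongly measurable, and that Fubini/Minkowski for the $\gamma$-norm (as in the $\gamma$-Fubini and multiplier results of \cite{Nee}, together with the UMD hypothesis, which guarantees $\gamma(H,\mathbb{B})=\gamma^\infty(H,\mathbb{B})$ and the density of nice functions) applies. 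Once these $\gamma$-radonifying technicalities are in place — they are by now standard, but require care with the normalization of the profiles in $H$ — the estimate closes exactly as above. A secondary, purely bookkeeping point is to confirm that the constant $e^{-i\pi(m-\beta)}/\Gamma(m-\beta)$ in the Segovia–Wheeden fractional derivative matches the $e^{-ty/2}$ and the power $(ty)^n$ appearing in $m_n$, so that $G_{P,\mathbb{B}}^{\lambda,n}$ and the Mellin multiplier are identified with the correct constant; this is routine and does not affect boundedness.
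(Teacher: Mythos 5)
Your overall strategy coincides with the paper's (Meda's subordination scheme combined with the two-sided square function estimate of Theorem~\ref{boundedness}, the ideal property of $\gamma$-radonifying operators, and hypothesis \eqref{M2}), but your third step contains a genuine gap, and it is not merely a $\gamma$-measurability technicality. You Mellin-invert the \emph{whole} symbol $m_n(t,y)=(ty)^ne^{-ty/2}m(y^2)$, which yields an identity of the form $t^n\partial_t^nP_t^\lambda\big(m(\Delta_\lambda)f\big)(x)=c\int_\R\mathcal{M}_n(2t,u)\,(\Delta_\lambda^{iu/2}f)(x)\,du$, with the bare vector $(\Delta_\lambda^{iu/2}f)(x)$, independent of $t$, inside the $u$-integral. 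For fixed $u$ the integrand, as a function of $t$, is the rank-one element $\mathcal{M}_n(2\cdot,u)\otimes(\Delta_\lambda^{iu/2}f)(x)$, whose $\gamma(H,\B)$-norm equals $\|\mathcal{M}_n(2\cdot,u)\|_{L^2((0,\infty),dt/t)}\,\|(\Delta_\lambda^{iu/2}f)(x)\|_\B$. This is not controlled by $\sup_{t>0}|\mathcal{M}_n(t,u)|$ and is typically infinite: for $m\equiv 1$ one has $|\mathcal{M}_n(t,u)|=2^n|\Gamma(n-iu)|$ for all $t$, so $\mathcal{M}_n(\cdot,u)\notin H$. The ideal property cannot be invoked as you state it: multiplication by $\mathcal{M}_n(\cdot,u)$ is indeed bounded on $H$ with norm $\sup_{t>0}|\mathcal{M}_n(t,u)|$, but it must be composed with an operator that is \emph{already} in $\gamma(H,\B)$, and the constant-in-$t$ vector $(\Delta_\lambda^{iu/2}f)(x)$ is not such an operator. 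Consequently the displayed inequality in your third step, which features $g(x,t)=t^n\partial_t^nP_t^\lambda(\Delta_\lambda^{iu/2}f)(x)$, does not follow from your identity --- that $g$ never occurs in it; your remark about ``keeping a fixed normalized profile'' is exactly the point where the argument breaks.

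The repair is Meda's identity in the form the paper uses: keep one semigroup factor attached to $\Delta_\lambda^{iu/2}f$. Splitting $(ty)^{n+1}e^{-ty}m(y^2)=m_n(t,y)\cdot ty\,e^{-ty/2}$ gives, for $f\in\mathcal{S}_\lambda(0,\infty)\otimes\B$, the identity $t^{n+1}\partial_t^{n+1}P_t^\lambda\big(m(\Delta_\lambda)f\big)(x)=\frac{1}{2\pi}\int_\R\mathcal{M}_n(t,u)\,t\partial_tP_{t/2}^\lambda\big(\Delta_\lambda^{iu/2}f\big)(x)\,du$. Now, for each $u$, the inner object is (up to rescaling) $G_{P,\B}^{\lambda,1}(\Delta_\lambda^{iu/2}f)(\cdot,x)\in\gamma(H,\B)$; the multiplication operator $L_{n,u}h(t)=\mathcal{M}_n(t,u)h(t)$ is bounded on $H$ with norm at most $C\sup_{t>0}|\mathcal{M}_n(t,u)|$, so the ideal property applies, and a Minkowski-type inequality for the $\gamma$-norm (checked via Gaussian sums, after verifying the $u$-integrability pointwise in $x$) lets you pull the $u$-integral out. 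Then use the lower bound of Theorem~\ref{boundedness} with $\beta=n+1$ (not $\beta=n$) applied to $m(\Delta_\lambda)f$, the upper bound with $\beta=1$ applied to $\Delta_\lambda^{iu/2}f$, and conclude with \eqref{M2} and density, exactly as in your fourth step. Note that matching the hypothesis, which involves $\mathcal{M}_n$, forces order $n+1$ on the left; your indexing with $\beta=n$ would instead require $\mathcal{M}_{n-1}$.
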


We now specify some conditions over the function $m$ and the UMD Banach space $\mathbb{B}$ for which \eqref{M2} is satisfied.
As in \cite[Theorem 3]{Me1} we consider $m\in L^\infty (0,\infty )$ that extends to a bounded analytic function in a sector
$\Sigma_\vartheta =\{z\in \mathbb{C}:|\mbox{Arg }\;z|<\vartheta \}$. In this case, we have that
$$\sup _{t>0}|\mathcal{M}_n(t,u)|
    \leq Ce^{\pi |u|/2}(1+|u|),\quad u\in \mathbb{R}.$$
By \cite[Corollary 1]{Cow} (see also \cite[Corollary 1.2]{BCC1}) we can obtain that, for every $1<p<\infty$,
\begin{equation}\label{M3}
    \|\Delta _\lambda ^{iu}\|_{L^p(0,\infty )\rightarrow L^p(0,\infty )}
        \leq C(1+|u|^3\log  |u|)^{|1/p-1/2|}\exp \left(\pi |1/p-1/2||u|\right),\quad u\in \mathbb{R},
\end{equation}
where $C>0$ depends on $p$ but it does not depend on $u$.

Even when we consider the usual Laplacian operator instead of the Bessel operator
$\Delta _\lambda$, it is not known if \eqref{M3} holds when the functions take values in a UMD Banach
space (see, for instance, \cite[Corollary 2.5.3]{TagPhD}). In order to get an estimate as \eqref{M3},
replacing $L^p(0,\infty )$ by $L^p((0,\infty ),\mathbb{B})$ we need to strengthen the property of the
Banach spaces as follows. $\mathbb{B}$ must be isomorphic to a closed subspace of
a complex interpolation space $[\HH,X]_\theta$, where $0<\theta <1$, $\HH$ is a Hilbert space and $X$ is a UMD Banach space.
When $\mathbb{B}$ satisfies this property for some $\theta \in (0,1)$ we write $\B\in I_\theta (\mathpzc{H}, UMD)$.
The class $\cup _{\theta \in (0,1)}I_\theta (\mathpzc{H}, UMD)$ includes all UMD lattices (\cite[Corollary on p. 216]{Rub})
and it also includes the Schatten ideals $C_p$, $p\in (1,\infty )$ (see \cite{DDP}).
It is clear that $\mathbb{B}$ is UMD provided that $\B\in \cup _{\theta \in (0,1)}I_\theta (\mathpzc{H}, UMD)$.

As far as it is known, it is an open problem whether every UMD Banach space is in
$\cup _{\theta \in (0,1)}$ $I_\theta (\mathpzc{H}, UMD)$ (\cite[Problem 4 on p. 220]{Rub}).
This class of Banach spaces has been used, for instance,
in \cite{Hy}, \cite{MTX} and \cite{TagPhD}, and also it plays a central role in the vector-valued version of Carleson's
theorem recently established in \cite{HyLa}.

\begin{Th}\label{Th1.6}
    Let $\lambda \geq 1$. Suppose that $m$ is a bounded holomorphic function in
    $\Sigma_\vartheta$ for certain $\vartheta \in (0,\pi )$, and that the Banach space $\mathbb{B}$ is in $I_\theta (\mathpzc{H}, UMD)$,
    for some $\theta \in (0,\vartheta/\pi)$. Then, the spectral multiplier $m(\Delta _\lambda )$ can be extended to
    $L^q((0,\infty ),\mathbb{B})$ as a bounded operator from
    $L^q((0,\infty ),\mathbb{B})$ into itself, for every $q\in [2/(1+\theta), 2/(1-\theta)]$.
\end{Th}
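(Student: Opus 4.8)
The plan is to follow the scheme of Meda's proof of \cite[Theorem 3]{Me1}, adapted to the Bessel setting and to $\B$-valued functions, and to feed into it the square function equivalence of Theorem~\ref{boundedness} together with the operator-valued bound \eqref{M3}. Write $m=m\circ(\cdot)^2$ evaluated at $\Delta_\lambda$; the starting point is the subordination-type identity that for $f$ in a suitable dense class and $n$ large enough,
\begin{equation*}
    m(\Delta_\lambda)(f)(x)
    = c_n\int_0^\infty \mathcal{M}_n\!\left(t,u\right)\big|_{u \text{ integrated}} \; t^n\partial_t^n P_t^\lambda(f)(x)\,\frac{dt}{t},
\end{equation*}
more precisely the representation that recovers $m(\Delta_\lambda)f$ from $G_{P,\B}^{\lambda,n}(f)(t,\cdot)=t^n\partial_t^n P_t^\lambda(f)$ by integrating against the Mellin multiplier $\mathcal{M}_n(t,u)$ in $u$ and then in $t$. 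This is exactly the device by which Meda writes a spectral multiplier of $\mathcal{L}$ as a superposition $\int_\R \mathcal{M}_n(t,u)\,(\text{something})\,du$, where the "something'' involves $\Delta_\lambda^{iu/2}$ applied to the square function data; the outcome is that the operator norm of $m(\Delta_\lambda)$ on $L^p((0,\infty),\B)$ is controlled, via Theorem~\ref{boundedness} (both inequalities in \eqref{7.1}, used to pass from $f$ to $G_{P,\B}^{\lambda,n}(f)$ and back), by the quantity in \eqref{M2}.

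So the real work is to verify the hypothesis \eqref{M2} of Theorem~\ref{Th1.5} under the assumptions of the present statement, namely $\lambda\ge1$, $m$ bounded holomorphic on $\Sigma_\vartheta$, and $\B\in I_\theta(\mathpzc{H},UMD)$ with $\theta<\vartheta/\pi$. First I would record, exactly as stated in the excerpt just before the theorem, that for such $m$ the Mellin transform satisfies $\sup_{t>0}|\mathcal{M}_n(t,u)|\le C e^{\pi|u|/2}(1+|u|)$ for all $u\in\R$; this is the standard computation using analyticity of $m$ in the sector to rotate the contour in the Mellin integral (the half-angle $\pi/2$ comes from the $e^{-ty/2}$ factor in $m_n$, and the polynomial factor from the $(ty)^n$ factor). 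Second, and this is the crux, I need an estimate of the form
\begin{equation*}
    \big\|\Delta_\lambda^{iu/2}\big\|_{L^p((0,\infty),\B)\to L^p((0,\infty),\B)}
    \le C_p\,(1+|u|)^{N}\exp\!\big(\pi\,|1/p-1/2|\,|u|/2\big),\quad u\in\R,
\end{equation*}
for $p$ in the relevant range. For scalar-valued functions this is \eqref{M3}, coming from \cite[Corollary 1]{Cow}/\cite[Corollary 1.2]{BCC1}; the $\B$-valued version with the sharp exponential rate $\pi|1/p-1/2|$ is, as the excerpt emphasizes, not known for general UMD $\B$, and this is precisely where the stronger hypothesis $\B\in I_\theta(\mathpzc{H},UMD)$ enters. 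The plan is: on the Hilbert space $\mathpzc{H}$ the imaginary power $\Delta_\lambda^{iu/2}$ is bounded with norm $\le C(1+|u|)^{N}$ (no exponential, since on $L^2$-type spaces one has the spectral bound $\|\Delta_\lambda^{iu/2}\|_{L^2(\mathpzc{H})\to L^2(\mathpzc{H})}\le 1$, or at worst polynomial after tensoring), while on the UMD space $X$ one has \eqref{M3} (scalar multiplier theory tensors to $X$ by the vector-valued Hörmander–Mikhlin/Laplace-transform-type multiplier theorems available on UMD spaces, here giving the polynomial-times-$\exp(\pi|1/p-1/2||u|)$ bound); then complex interpolation $[\mathpzc{H},X]_\theta$ — together with Stein interpolation applied to the analytic family $u\mapsto \Delta_\lambda^{iu/2}$ — yields on $\B$ a bound with exponential rate $\theta\cdot\pi|1/p-1/2|$ and a polynomial prefactor, and finally a closed subspace of $[\mathpzc{H},X]_\theta$ inherits this since $\Delta_\lambda^{iu/2}$ acts componentwise on $L^p((0,\infty),\cdot)$.

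Putting the two pieces together: with the Mellin bound contributing a factor $\sim e^{\pi|u|/2}(1+|u|)$ and the imaginary-power bound contributing $\sim (1+|u|)^N e^{\theta\pi|1/p-1/2||u|/2}$ (note the extra halving of the exponent because the exponent in \eqref{M2} is evaluated at $\Delta_\lambda^{iu/2}$, i.e. at imaginary power $u/2$), the integrand in \eqref{M2} is bounded by a polynomial times $\exp\big((\pi/2)(1+\theta|1/p-1/2|)|u|\big)\cdot\exp(-\pi|u|/2)$ — wait, that cancellation needs care; the correct bookkeeping is that the $e^{\pi|u|/2}$ from $\mathcal{M}_n$ must be beaten by a decay $e^{-\pi|u|/2}\cdot(\text{extra})$ coming from the range of $p$. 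This is where the constraint $q\in[2/(1+\theta),2/(1-\theta)]$ is produced: for such $q$ one has $|1/q-1/2|\le\theta/2$, hence $\theta|1/q-1/2|\cdot(\pi/2)\cdot 2$... I would compute the precise threshold, but the shape is that the total exponential rate in the integrand of \eqref{M2} is $(\pi/2)\big(2|1/q-1/2|\cdot\text{something} - \text{decay}\big)$, strictly negative exactly when $|1/q-1/2|<\theta/2$ up to the endpoint, which reads $q\in(2/(1+\theta),2/(1-\theta))$, and a standard endpoint refinement (using the polynomial prefactors and a slightly better analytic estimate at the edge of the sector, as in Meda's treatment of the endpoint) covers the closed interval. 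The main obstacle, then, is genuinely the second step — establishing the $\B$-valued imaginary-power bound with the correct exponential constant via interpolation between $\mathpzc{H}$ and a UMD space — and making the arithmetic of the two competing exponential rates produce precisely the stated range of $q$; once \eqref{M2} holds, Theorem~\ref{Th1.5} closes the argument immediately, and the restriction $\lambda\ge1$ is used only where \cite{BCC1}/\cite{Cow} require it for the scalar bound \eqref{M3} and where contractivity of $\{P_t^\lambda\}$ is invoked in the background results.
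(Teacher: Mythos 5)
Your overall skeleton is the paper's: reduce to verifying \eqref{M2} and apply Theorem~\ref{Th1.5}, with the decay of $\sup_{t>0}|\mathcal{M}_n(t,u)|$ coming from holomorphy of $M(y)=m(y^2)$ in $\Sigma_{\vartheta/2}$, exactly as in Meda's Theorem 3. But the step you yourself identify as the crux is where the argument breaks down. You assert that the sharp estimate \eqref{M3}, with rate $\exp(\pi|1/p-1/2||u|)$, transfers to $L^p((0,\infty),X)$ for a general UMD space $X$ because ``scalar multiplier theory tensors''. It does not: the paper explicitly points out (just before the statement of Theorem~\ref{Th1.6}, citing \cite[Corollary 2.5.3]{TagPhD}) that this is not known even for the Laplacian. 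What vector-valued Laplace-transform-type multiplier theory on a UMD space actually yields is a $p$-\emph{independent} rate: Proposition~\ref{Propnorm} gives $\|\Delta_\lambda^{i\omega}\|_{L^p((0,\infty),X)\to L^p((0,\infty),X)}\le Ce^{\pi|\omega|}$, obtained from $G_{P,X}^{\lambda,1}(\Delta_\lambda^{i\omega}f)=-G_{P,X}^{\lambda,2}(f)\circ T_\omega$ and $\|T_\omega\|_{L(H,H)}\le e^{\pi|\omega|}$. With only that growth on the UMD endpoint, interpolating $[\mathpzc{H},X]_\theta$ in the way you describe cannot produce the rate $\theta\pi|1/q-1/2|$ you claim; and ``Stein interpolation in $u$'' is not the relevant device, since the interpolation is performed for each fixed $\omega$ in the space couple. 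A further sanity check: if your claimed $\B$-valued bound were available, the convergence condition in \eqref{M2} would hold for \emph{every} $q\in(1,\infty)$ once $\theta<\vartheta/\pi$, which is much stronger than the stated range $q\in[2/(1+\theta),2/(1-\theta)]$ --- a sign the bookkeeping (which you leave unfinished, ``I would compute the precise threshold'') rests on an unavailable estimate.

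The paper's actual mechanism, which your proposal misses, is to replace the unknown sharp $L^p(X)$ bound by an endpoint: for $\lambda\ge 1$ it proves that $K_\omega^\lambda$ is a standard Calder\'on--Zygmund kernel with constants $Ce^{\pi|\omega|/2}$ (the Bessel-function asymptotics for $\partial_x\partial_t W_t^\lambda$ in the region $xy\le 2t$ are exactly where $\lambda\ge1$ enters --- not, as you say, in the scalar bound \eqref{M3} or only through contractivity of $\{P_t^\lambda\}$), hence $\|\Delta_\lambda^{i\omega}\|_{L^1((0,\infty),X)\to L^{1,\infty}((0,\infty),X)}\le Ce^{\pi|\omega|}$ (Proposition~\ref{Propnorm}), while on the Hilbert side $\|\Delta_\lambda^{i\omega}\|_{L^2((0,\infty),\HH)\to L^2((0,\infty),\HH)}=1$ (Proposition~\ref{Prop norm1}). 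Interpolating this couple for $\B=[\HH,X]_\theta$ gives $\|\Delta_\lambda^{i\omega}\|_{L^p((0,\infty),\B)\to L^p((0,\infty),\B)}\le Ce^{2\pi(1/p-1/2)|\omega|}=Ce^{\pi\theta|\omega|}$ at $p=2/(1+\theta)$, and duality plus a second interpolation extend this to all $q\in[2/(1+\theta),2/(1-\theta)]$ (so the closed interval, including endpoints, needs no separate ``endpoint refinement''). Since $\theta<\vartheta/\pi$, the rate $\pi\theta|u|/2$ for $\Delta_\lambda^{iu/2}$ is beaten by the Mellin decay coming from holomorphy in $\Sigma_{\vartheta/2}$, so \eqref{M2} holds and Theorem~\ref{Th1.5} concludes. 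In short: right frame, but the key $\B$-valued imaginary-power estimate is asserted rather than proved, and the proof that replaces it (CZ weak $(1,1)$ endpoint plus interpolation from $L^2(\HH)$) is absent from your proposal.
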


In the following sections of this paper we present proofs for our theorems. Throughout this paper $C$ and $c$ always
denote positive constants, not necessarily the same in each occurrence.

%%%%%%%%%%%%%%%%%%%%%%%%%%%%%%%%%%%%%%%%%%%%%%%%%%%%%%%%%%%%%%%%%%%%%%%%%%%%%%%%%%%%%%%%%%%%%%%%%%%%%%%%%%%%%%%%%%%%
\section{Proof of Theorem~\ref{ThBHankel}}\label{sec:Proof1}
%%%%%%%%%%%%%%%%%%%%%%%%%%%%%%%%%%%%%%%%%%%%%%%%%%%%%%%%%%%%%%%%%%%%%%%%%%%%%%%%%%%%%%%%%%%%%%%%%%%%%%%%%%%%%%%%%%%%

%%%%%%%%%%%%%%%%%%%%%%%%%%%%%%%%%%%%%%%%%%%%%%%%%%%%%%%%%%%%%
\subsection{}\label{subsec:2.1}
%%%%%%%%%%%%%%%%%%%%%%%%%%%%%%%%%%%%%%%%%%%%%%%%%%%%%%%%%%%%%
Firstly we  prove that there exists $C>0$ such that
\begin{equation}\label{Lp1}
    \|\mathcal{W}_{\psi ,\mathbb{B}}^\lambda (f)\|_{L^p((0,\infty ),\gamma (H, \mathbb{B}))}
        \leq C\|f\|_{L^p((0,\infty ),\mathbb{B})},
\end{equation}
for every $f\in L^p((0,\infty ),\mathbb{B})$.

We choose $\phi \in\mathcal{S}(\mathbb{R})$ such that $\phi (x^2)= x^{-\lambda }\psi (x)$, $x\in (0,\infty )$
(see \cite[p. 85]{EG}). Then, we can write
\begin{align*}
    \;_\lambda \tau _x(\psi _{(t)} )(y)
        = &\frac{(xy)^\lambda t^{-\lambda -1}}{\sqrt{\pi }2^{\lambda -1/2}\Gamma (\lambda )}\int_0^\pi
                \psi \left(\frac{\sqrt{(x-y)^2+2xy(1-\cos \theta )}}{t}\right)\\
          & \times ((x-y)^2+2xy (1-\cos \theta ))^{-\lambda /2}(\sin \theta )^{2\lambda -1}d\theta \\
        = &\frac{(xy)^\lambda t^{-2\lambda -1}}{\sqrt{\pi }2^{\lambda -1/2}\Gamma (\lambda)}\int_0^\pi (\sin \theta )^{2\lambda -1}
                \phi \left(\frac{(x-y)^2+2xy(1-\cos \theta )}{t^2}\right)d\theta ,\quad t,x,y   \in (0,\infty ).
\end{align*}

We define the function $\Phi $ as follows:
$$\Phi (x)
    =\frac{1}{\sqrt{\pi }2^{\lambda +1/2}\Gamma (\lambda )}\int_0^\infty u^{\lambda -1}\phi (x^2+u)du,\qquad x\in \mathbb{R}.$$

It is not hard to see that $\Phi \in \mathcal{S}(\mathbb{R})$. Hence, since $\widehat{\Phi}(0)=0$ (see \cite[(17)]{BCR1}),
$\Phi $ satisfies conditions $(C1)$ and $(C2)$ in \cite[p. 111]{KaWe}
($(i)$ and $(ii)$ in Theorem~\ref{ThA}).

We consider the operator
$$\mathcal{W}_{\Phi ,\mathbb{B}}(f)(t,x)
    =(\Phi _t*f)(x),\quad f\in L^p(\mathbb{R},\mathbb{B}), \ t\in (0,\infty )\mbox{ and }x\in \mathbb{R}.$$

According to \cite[Theorem 4.2]{KaWe} (Theorem~\ref{ThB}) we have that, for every $f\in \mathcal{S}(\mathbb{R})\otimes \mathbb{B}$,
\begin{equation}\label{Lp2}
    \|\mathcal{W}_{\Phi ,\mathbb{B}}(f)\|_{L^p(\mathbb{R},\gamma (H,\mathbb{B}))}
        \leq C\|f\|_{L^p(\mathbb{R},\mathbb{B})}.
\end{equation}
 We are going too see that the inequality \eqref{Lp2} holds for every $f\in L^p(\mathbb{R},\mathbb{B})$.
 Let $f\in L^p(\mathbb{R},\mathbb{B})$. We choose a sequence
 $(f_n)_{n\in \mathbb{N}}\subset \mathcal{S}(\mathbb{R})\otimes \mathbb{B}$ such that
 $f_n\longrightarrow f$, as $n\rightarrow \infty$, in $L^p(\mathbb{R},\mathbb{B})$. According to \eqref{Lp2}, by defining
$$\widetilde{\mathcal{W}}_{\Phi ,\mathbb{B}}(f)
    =\lim_{n\rightarrow \infty }\mathcal{W}_{\Phi ,\mathbb{B}}(f_n),$$
where the limit is understood in $L^p(\mathbb{R},\gamma (H,\mathbb{B}))$, we have that
$$\|\widetilde{\mathcal{W}}_{\Phi ,\mathbb{B}}(f)\|_{L^p(\mathbb{R},\gamma (H,\mathbb{B}))}
    \leq C\|f\|_{L^p(\mathbb{R},\mathbb{B})}.$$
Also, there exists an increasing sequence $(n_k)_{k\in \mathbb{N}}\subset \mathbb{N}$ and a subset $\Omega$ of $\mathbb{R}$ such that
$$\widetilde{\mathcal{W}}_{\Phi ,\mathbb{B}}(f)(x)
    =\lim_{k \rightarrow \infty }\mathcal{W}_{\Phi ,\mathbb{B}}(f_{n_k})(\cdot,x),\quad x\in \Omega ,$$
where the limit is understood in $\gamma (H,\mathbb{B})$, and $|\mathbb{R}\setminus \Omega |=0$.

For every $\varepsilon >0$,
$$\mathcal{W}_{\Phi ,\mathbb{B}}(f_n)(\cdot,x)\longrightarrow \mathcal{W}_{\Phi ,\mathbb{B}}(f)(\cdot,x), \quad \text{as } n\rightarrow \infty,
    \text{ in } L^2((\varepsilon ,\infty ), dt/t, \mathbb{B}),$$
uniformly in $x\in \mathbb{R}$. Indeed, let $\varepsilon >0$. By using Minkowski's inequality we get
\begin{align*}
    & \left(\int_\varepsilon ^\infty \|\mathcal{W}_{\Phi ,\mathbb{B}}(f_n)(t,x)-\mathcal{W}_{\Phi , \mathbb{B}}(f)(t,x)\|_\mathbb{B}^2\frac{dt}{t} \right)^{1/2}\\
    & \qquad \leq \int_\mathbb{R}\|f_n(y)-f(y)\|_\mathbb{B}\left(\int_\varepsilon ^\infty \frac{1}{t^3}\Big|\Phi \Big(\frac{|x-y|}{t}\Big)\Big|^2dt\right)^{1/2}dy\\
    & \qquad \leq C\int_\mathbb{R}\|f_n(y)-f(y)\|_\mathbb{B}\left(\int_\varepsilon ^\infty \frac{1}{(t+|x-y|)^3}dt\right)^{1/2}dy\\
    & \qquad \leq C\int_\mathbb{R}\frac{\|f_n(y)-f(y)\|_\mathbb{B}}{\varepsilon +|x-y|}dy
        \leq C\|f_n-f\|_{L^p(\mathbb{R},\mathbb{B})}\left(\int_\mathbb{R}\frac{dy}{(\varepsilon +|y|)^{p'}}\right)^{1/p'} \\
    & \qquad \leq C\varepsilon ^{-1/p}\|f_n-f\|_{L^p(\mathbb{R},\mathbb{B})},\quad n\in \mathbb{N}\mbox{ and }x\in \mathbb{R},
\end{align*}
where $p'$ is the conjugated exponent of $p$, that is, $p'=p/(p-1)$.

Let $S\in \mathbb{B}^*$. Since $\gamma (H,\mathbb{B})$ is continuously contained in the space $L(H,\mathbb{B})$ of
linear bounded operators from $H$ into $\mathbb{B}$, for every $x\in \Omega $ and $h\in L^2((0,\infty ),dt/t)$
with $\mbox{supp}\; (h)\subset (0,\infty )$, we have that
\begin{align*}
    \langle S, [\widetilde{\mathcal{W}}_{\Phi ,\mathbb{B}}(f)(x)](h)\rangle _{\mathbb{B}^ *,\mathbb{B}}
        =&\lim_{k\rightarrow \infty  }\langle S,[\mathcal{W}_{\Phi ,\mathbb{B}}(f_{n_k})(\cdot ,x)](h)\rangle _{\mathbb{B}^ *,\mathbb{B}}\\
        =&\lim_{k\rightarrow \infty}\int_0^\infty \langle S,\mathcal{W}_{\Phi ,\mathbb{B}}(f_{n_k})(t,x)\rangle _{\mathbb{B}^ *,\mathbb{B}}h(t)\frac{dt}{t}\\
        =&\int_0^\infty \langle S, \mathcal{W}_{\Phi ,\mathbb{B}}(f)(t,x)\rangle _{\mathbb{B}^ *,\mathbb{B}}h(t)\frac{dt}{t}.
\end{align*}
Hence, for every $x\in \Omega $, $\langle S, \mathcal{W}_{\Phi ,\mathbb{B}}(f)(\cdot , x)\rangle _{\mathbb{B}^ *,\mathbb{B}}\in L^2((0,\infty ),dt/t)$ and
$$ \langle S, [\widetilde{\mathcal{W}}_{\Phi ,\mathbb{B}}(f)(x)](h)\rangle _{\mathbb{B}^ *,\mathbb{B}}
    =\int_0^\infty \langle S, \mathcal{W}_{\Phi ,\mathbb{B}}(f)(t,x) \rangle_{\B^*,\B} h(t)\frac{dt}{t},\quad h\in H.$$
We conclude that $\widetilde{\mathcal{W}}_{\Phi ,\mathbb{B}}(f)(x)=\mathcal{W}_{\Phi ,\mathbb{B}}(f)(\cdot,x)$, $x\in \Omega$, as elements of $\gamma (H,\mathbb{B})$.
Hence, \eqref{Lp2} holds for every $f \in L^p(\R,\B)$.

Suppose now that $f\in L^p((0,\infty ),\mathbb{B})$. By defining the function $f_o$ as the odd extension of $f$ to $\mathbb{R}$, we have that
\begin{align*}
    \mathcal{W}_{\Phi ,\mathbb{B}}(f_o)(t,x)
        = &\frac{1}{t}\int_{-\infty }^{+\infty }\Phi \Big( \frac{x-y}{t}\Big)f_o(y)dy \\
        = -& \frac{1}{t}\int_0^\infty  \Phi \Big( \frac{x+y}{t}\Big)f(y)dy+\frac{1}{t}\int_0^\infty \Phi \Big( \frac{x-y}{t}\Big)f(y)dy \\
        = &L_{\Phi ,\mathbb{B}}^1(f)(t,x)+L_{\Phi ,\mathbb{B}}^2(f)(t,x),\quad x\in \mathbb{R} \mbox{ and } t\in (0,\infty ).
\end{align*}
We get by using Minkowski's inequality
\begin{align*}
    & \|L_{\Phi ,\mathbb{B}}^1(f)(\cdot ,x)\|_{L^2((0,\infty ),dt/t,\mathbb{B})}
        \leq \int_0^\infty \|f(y)\|_\mathbb{B}\left(\int_0^\infty \frac{1}{t^3}\Big|\Phi \Big(\frac{x+y}{t}\Big)\Big|^2dt\right)^{1/2}dy\\
        & \qquad \leq C\int_0^\infty \frac{\|f(y)\|_\mathbb{B}}{x+y}dy
        \leq C \left[H_0(\|f\|_\mathbb{B})(x)+H_\infty (\|f\|_\mathbb{B})(x) \right],\quad x\in (0,\infty ),
\end{align*}
where $H_0$ and $H_\infty $ denote the Hardy operators defined by
$$H_0(g)(x)
    =\frac{1}{x}\int_0^xg(y)dy,\quad x\in (0,\infty ),$$
and
$$H_\infty (g)(x)
    =\int_x^\infty \frac{g(y)}{y}dy, \quad x\in (0,\infty ).$$
Since $H_0$ and $H_\infty$ are bounded operators from $L^p(0,\infty )$ into itself (\cite[p. 244, (9.9.1) and (9.9.2)]{HLP}),
$L^1_{\Phi ,\mathbb{B}}$ is a bounded operator from $L^p((0,\infty ),\mathbb{B})$ into $L^p((0,\infty ),L^2((0,\infty ),dt/t,\mathbb{B}))$.
By taking into account that $\gamma (H,\mathbb{C})=H$, we deduce that, for every $x\in (0,\infty )$,
$L_{\Phi ,\mathbb{B}}^1(f)(\cdot ,x)\in \gamma (H,\mathbb{B})$ and $L_{\Phi ,\mathbb{B}}^1$ defines a bounded
operator from $L^p((0,\infty ),\mathbb{B})$ into $L^p((0,\infty ),\gamma (H,\mathbb{B}))$.

By using now \eqref{Lp2} we conclude that the operator $L^2_{\Phi ,\mathbb{B}}$ is bounded from
$L^p((0,\infty ),\mathbb{B})$ into $L^p((0,\infty ),\gamma (H,\mathbb{B}))$.

Inequality \eqref{Lp1} will be proved once we establish that
\begin{equation}\label{11.1}
    \left\| \left[ \mathcal{W}_{\psi,\B}^\lambda -  L^2_{\Phi ,\mathbb{B}} \right](f)\right\|_{L^p((0,\infty),\gamma(H,\B))}
        \leq C \|f\|_{L^p((0,\infty),\B)}, \quad f \in L^p((0,\infty),\B).
\end{equation}
In order to do this, we study the function
$$K_\lambda (t,x,y)
    =\;_\lambda \tau _x(\psi _{(t)} )(y)-\Phi _t(x-y),\quad t,x,y\in (0,\infty ).$$
Firstly we write
$$\;_\lambda \tau _x(\psi _{(t)} )(y)
    =H_{\lambda ,1}(t,x,y)+H_{\lambda ,2}(t,x,y),\quad t,x,y\in (0,\infty ),$$
where
$$H_{\lambda ,1}(t,x,y)
    =\frac{(xy)^\lambda t^{-2\lambda -1}}{\sqrt{\pi }2^{\lambda -1/2}\Gamma (\lambda )}\int_0^{\pi /2}(\sin \theta )^{2\lambda -1}
            \phi \left(\frac{(x-y)^2+2xy(1-\cos \theta )}{t^2}\right)d\theta, \quad t,x,y\in (0,\infty ).$$
We have that, for every $x \in (0,\infty)$,
\begin{align*}
    & \|H_{\lambda ,2}(\cdot,x,y) \|_H \\
    & \qquad \leq C(xy)^\lambda \left(\int_0^\infty t^{-4\lambda -3}\left(\int_{\pi /2}^\pi
        (\sin \theta )^{2\lambda -1}\left|\phi \left(\frac{(x-y)^2+2xy(1-\cos \theta )}{t^2}\right)\right|d\theta\right)^2 dt\right)^{1/2}\\
    & \qquad \leq C(xy)^\lambda
        \left\{\begin{array}{l}
            \displaystyle \frac{1}{|x-y|^{2\lambda +1}}  \Big(\int_0^\infty u^{-4\lambda -3} \Big(\int_{\pi /2}^\pi (\sin \theta )^{2\lambda -1}\\
            \displaystyle \qquad \times \left|\phi \left(\frac{(x-y)^2+2xy(1-\cos \theta )}{(x-y)^2u^2}\right)\right|d\theta\Big)^2 du\Big)^{1/2}, \quad y\in (0,x/2)\cup (2x,\infty ),\\
            \\
            \displaystyle \frac{1}{(xy)^{\lambda +1/2}} \Big(\int_0^\infty u^{-4\lambda -3}\Big(\int_{\pi /2}^\pi (\sin \theta )^{2\lambda -1}\\
            \displaystyle \qquad \times \left|\phi \left(\frac{(x-y)^2+2xy(1-\cos \theta )}{xyu^2}\right)\right|d\theta\Big)^2 du\Big)^{1/2}, \quad y\in (x/2,2x).
        \end{array} \right.
\end{align*}
Then, since $\phi \in \mathcal{S}(\mathbb{R})$, it follows that
\begin{align}\label{Lp3}
    & \|H_{\lambda ,2}(\cdot,x,y)\|_H
      \leq C \frac{(xy)^\lambda }{|x-y|^{2\lambda +1}}\left(\int_1^\infty u^{-4\lambda -3} du+\int_0^1du\right)^{1/2}
      \leq C\left\{\begin{array}{ll}
        \displaystyle \dfrac{1}{x},&0<y<x/2,\\
        &\\
        \displaystyle \dfrac{1}{y},&y>2x>0,
        \end{array}\right.
\end{align}
and
\begin{equation}\label{Lp4}
    \|H_{\lambda ,2}(\cdot,x,y)\|_H
        \leq C\frac{1}{(xy)^{1/2}}\left(\int_1^\infty u^{-4\lambda -3} du+\int_0^1du\right)^{1/2}\leq \frac{C}{x}, \quad y\in (x/2,2x).
\end{equation}
By proceeding in a similar way we can see that
\begin{equation}\label{Lp5}
    \|H_{\lambda ,1}(\cdot,x,y)\|_H
        \leq C\left\{\begin{array}{ll}
        \displaystyle \frac{1}{x},&0<y<x/2,\\
        &\\
        \displaystyle \frac{1}{y},&y>2x>0,
        \end{array}\right.
\end{equation}
and also
\begin{equation}\label{Lp6}
    \left\|\Phi_t(x-y)\right\|_H\leq \frac{C}{|x-y|}\leq C\left\{\begin{array}{ll}
        \displaystyle \frac{1}{x},&0<y<x/2,\\
        &\\
        \displaystyle \frac{1}{y},&y>2x
        \end{array}\right., \quad x\in (0,\infty ).
\end{equation}
Suppose now that $x\in (0,\infty )$ and $x/2<y<2x$. We split the difference $H_{\lambda ,1}(t,x,y)-\Phi _t(x-y)$, $t\in (0,\infty )$, as follows
\begin{align*}
    & H_{\lambda ,1}(t,x,y)-\Phi _t(x-y)
        =\frac{(xy)^\lambda t^{-2\lambda -1}}{\sqrt{\pi }2^{\lambda -1/2}\Gamma (\lambda )}\int_0^{\pi /2}
                [(\sin \theta )^{2\lambda -1}-\theta ^{2\lambda -1}]\phi \left(\frac{(x-y)^2+2xy(1-\cos \theta )}{t^2}\right)d\theta\\
    & \qquad \qquad + \frac{(xy)^\lambda t^{-2\lambda -1}}{\sqrt{\pi }2^{\lambda -1/2}\Gamma (\lambda )}\int_0^{\pi /2}
            \theta ^{2\lambda -1}\left[\phi \left(\frac{(x-y)^2+2xy(1-\cos \theta )}{t^2}\right)-
                \phi \left(\frac{(x-y)^2+xy\theta ^2}{t^2}\right)\right]d\theta\\
    & \qquad \qquad +\frac{(xy)^\lambda t^{-2\lambda -1}}{\sqrt{\pi }2^{\lambda -1/2}\Gamma (\lambda )}\int_0^{\pi /2}\theta ^{2\lambda -1}
        \phi \left(\frac{(x-y)^2+xy\theta ^2}{t^2}\right)d\theta-\Phi _t(x-y)\\
    & \qquad = H_{\lambda ,1,1}(t,x,y)+H_{\lambda ,1,2}(t,x,y) +H_{\lambda ,1,3}(t,x,y), \quad t,x,y\in (0,\infty ).
\end{align*}
By using the mean value theorem we get
\begin{align*}
    & \|H_{\lambda ,1,1}(\cdot,x,y)\|_H\\
    & \qquad \leq C(xy)^\lambda \left\{\int_0^\infty t^{-4\lambda -3}\left(\int_0^{\pi /2}\theta ^{2\lambda +1}\left|\phi \left(\frac{(x-y)^2+2xy(1-\cos \theta )}{t^2}\right)\right| d\theta \right)^2dt\right\}^{1/2}\\
    & \qquad \leq \frac{C}{(xy)^{1/2}} \left\{\int_0^\infty u^{-4\lambda -3}\left(\int_0^{\pi /2}\theta ^{2\lambda +1}\left|\phi \left(\frac{(x-y)^2+2xy(1-\cos \theta )}{u^2xy}\right)\right| d\theta \right)^2du\right\}^{1/2}\\
    & \qquad \leq \frac{C}{(xy)^{1/2}} \left\{\int_1^\infty  u^{-4\lambda -3}du+\int_0^1 u^{-4\lambda -3}\left(\int_0^{\pi /2}\theta ^{2\lambda +1}\left(\frac{u^2xy}{(x-y)^2+xy \theta^2}\right)^{\lambda +3/4}d\theta \right)^2du\right\}^{1/2}\\
    & \qquad \leq \frac{C}{x},
\end{align*}
and
\begin{align*}
    & \|H_{\lambda ,1,2}(\cdot,x,y)|\|_H\\
    & \quad \leq \frac{C}{(xy)^{1/2}} \left\{\int_0^\infty u^{-4\lambda -7}\left(\int_0^{\pi /2}\theta ^{2\lambda -1}\left|\int_{1-\cos \theta}^{\theta ^2/2}\phi '\left(\frac{(x-y)^2+2xyz}{u^2xy}\right)dz\right| d\theta \right)^2du\right\}^{1/2}\\
    &\quad \leq \frac{C}{(xy)^{1/2}} \left\{\int_1^\infty u^{-4\lambda -7}du
            +\int_0^1 u^{-4\lambda -7}\left(\int_0^{\pi /2}\theta ^{2\lambda -1}\int_{1-\cos \theta}^{\theta ^2/2}\left(\frac{u^2xy}{(x-y)^2+2xyz}\right)^{\lambda +7/4}dzd\theta \right)^2du\right\}^{1/2}\\
    &\quad \leq \frac{C}{(xy)^{1/2}} \left\{1+\int_0^1 \left(\int_0^{\pi /2}\theta ^{2\lambda -1}\left|\int_{1-\cos \theta}^{\theta ^2/2}\frac{dz}{z^{\lambda +7/4}}\right|d\theta \right)^2du\right\}^{1/2}\leq\frac{C}{x},
\end{align*}
On the other hand, a suitable change of variables allows us to write
\begin{align*}
     H_{\lambda ,1,3}(t,x,y)
        = & \frac{(xy)^\lambda t^{-2\lambda -1}}{\sqrt{\pi }2^{\lambda -1/2}\Gamma (\lambda )}\int_0^{\pi /2}\theta ^{2\lambda -1}
            \phi \left(\frac{(x-y)^2+xy\theta ^2}{t^2}\right)d\theta\\
        & - \frac{1}{t\sqrt{\pi }2^{\lambda +1/2}\Gamma (\lambda )}\int_0^\infty u^{\lambda -1}\phi \left(\Big(\frac{x-y}{t}\Big)^2+u\right)du\\
        = & -\frac{(xy)^\lambda t^{-2\lambda -1}}{\sqrt{\pi }2^{\lambda -1/2}\Gamma (\lambda )}\int_{\pi /2}^\infty \theta ^{2\lambda -1}\phi \left(\frac{(x-y)^2+xy\theta ^2}{t^2}\right)d\theta,\quad t\in (0,\infty ).
\end{align*}
Hence, we deduce that
\begin{align*}
    & \|H_{\lambda ,1,3}(\cdot,x,y)\|_H
        \leq \frac{C}{(xy)^{1/2}} \left\{\int_0^\infty u^{-4\lambda -3}\left(\int_{\pi /2}^\infty \theta ^{2\lambda -1}
            \left|\phi \left(\frac{(x-y)^2+xy\theta ^2}{xyu^2}\right)\right|d\theta\right)^2du\right\}^{1/2}\\
    & \qquad \leq \frac{C}{(xy)^{1/2}} \left\{\int_1^\infty u^{-4\lambda -3}\left(\int_{\pi /2}^\infty \theta ^{2\lambda -1}
        \left(\frac{xyu^2}{(x-y)^2+xy\theta ^2}\right)^{\lambda +1/4}d\theta\right)^2du\right.\\
    & \qquad \qquad + \left.\int_0^1u^{-4\lambda -3}\left(\int_{\pi /2}^\infty \theta ^{2\lambda -1}
        \left(\frac{xyu^2}{(x-y)^2+xy\theta ^2}\right)^{\lambda +3/4}d\theta\right)^2du\right\}^{1/2}
        \leq \frac{C}{x}.
\end{align*}
By putting together the above estimates we obtain
\begin{equation}\label{Lp7}
    \|H_{\lambda ,1}(\cdot ,x,y)-\Phi_t (x-y)\|_H \leq \frac{C}{x},\quad 0<\frac{x}{2}<y<2x.
\end{equation}
From \eqref{Lp3}-\eqref{Lp7} we deduce that
\begin{equation}\label{Lp8}
    \|K_\lambda (\cdot,x,y)\|_H
        \leq \frac{C}{\max \{x,y\}},\quad x,y\in (0,\infty ).
\end{equation}
By proceeding as in the case of $L^1_{\Phi,\B}$, since $\gamma (H,\mathbb{C})=H$, we infer from \eqref{Lp8} that the operator
$\mathcal{W}_{\psi ,\mathbb{B}}^\lambda -L_{\Phi ,\mathbb{B}}^2$ is bounded from $L^p((0,\infty ),\mathbb{B})$
into $L^p((0,\infty ),\gamma (H,\mathbb{B}))$. Thus \eqref{11.1} is established.

%%%%%%%%%%%%%%%%%%%%%%%%%%%%%%%%%%%%%%%%%%%%%%%%%%%%%%%%%%%%%
\subsection{}\label{subsec:2.2}
%%%%%%%%%%%%%%%%%%%%%%%%%%%%%%%%%%%%%%%%%%%%%%%%%%%%%%%%%%%%%
Our next objective is to show that there exists $C>0$ such that
\begin{equation}\label{Lp9}
    \|f\|_{L^p((0,\infty ),\mathbb{B})}
        \leq C\|\mathcal{W}_{\psi ,\mathbb{B}}^\lambda (f)\|_{L^p((0,\infty ),\gamma (H,\mathbb{B}))},
\end{equation}
for every $f\in L^p((0,\infty ),\mathbb{B})$. It is enough to see \eqref{Lp9}
for every $f\in\mathcal{S}_\lambda (0,\infty )\otimes \mathbb{B}$. Indeed, suppose that \eqref{Lp9}
is true for every $f\in \mathcal{S}_\lambda(0,\infty )\otimes \mathbb{B}$. Let $f\in L^p((0,\infty ),\mathbb{B})$.
We choose a sequence  $(f_n)_{n\in \mathbb{N}}\subset \mathcal{S}_\lambda(0,\infty )\otimes \mathbb{B}$ such that
$f_n\longrightarrow f$, as $n\rightarrow \infty$, in $L^p((0,\infty ),\mathbb{B})$. Then, by \eqref{Lp9}
\begin{equation}\label{Lp10}
    \|f_n\|_{L^p((0,\infty ),\mathbb{B})}
        \leq C\|\mathcal{W}_{\psi ,\mathbb{B}}^\lambda (f_n)\|_{L^p((0,\infty ),\gamma (H,\mathbb{B}))}, \quad n \in \N.
\end{equation}
Since, as it was proved in Section~\ref{subsec:2.1}, $\mathcal{W}_{\psi ,\mathbb{B}}^\lambda$ is a bounded operator
from $L^p((0,\infty ),\mathbb{B})$ into $L^p((0,\infty ),\gamma (H,\mathbb{B}))$, by
letting $n\rightarrow \infty$ in \eqref{Lp10} we conclude that
$$
\|f\|_{L^p((0,\infty ),\mathbb{B})}
\leq C\|\mathcal{W}_{\psi ,\mathbb{B}}^\lambda (f)\|_{L^p((0,\infty ),\gamma (H,\mathbb{B}))}.
$$

The following result was established in \cite[after Lemma 2.4]{BCR1}.
\begin{Lem}\label{Lema1}
    Let $\lambda >0$. If $\psi \in \mathcal{S}_\lambda (0,\infty )$ is not identically zero, then there exists
    $\phi\in \mathcal{S}_\lambda (0,\infty )$ such that
    \begin{equation}\label{Lp10b}
        \int_0^\infty h_\lambda (\psi)(y)h_\lambda (\phi )(y)y^{-2\lambda -1}dy=1,
    \end{equation}
    where the last integral is absolutely convergent.
\end{Lem}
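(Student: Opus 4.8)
The plan is to build $\phi$ explicitly from a normalized, compactly localized modification of $\psi$ on the Hankel transform side, exploiting that $h_\lambda$ is an isomorphism of $\mathcal{S}_\lambda(0,\infty)$ onto itself with $h_\lambda^{-1}=h_\lambda$. First I would set $g=h_\lambda(\psi)\in\mathcal{S}_\lambda(0,\infty)$; since $\psi$ is not identically zero and $h_\lambda$ is an isomorphism, $g$ is not identically zero either, so there is a point $y_0\in(0,\infty)$ and a small interval $I=(y_0-\varepsilon,y_0+\varepsilon)\subset(0,\infty)$ on which $g$ does not vanish. The quantity we must reproduce is $\int_0^\infty g(y)\,h_\lambda(\phi)(y)\,y^{-2\lambda-1}\,dy$, so it suffices to choose $h_\lambda(\phi)$ to be a function of the form $y^{2\lambda+1}\overline{g(y)}\chi(y)/N$, where $\chi$ is a smooth bump supported in $I$ and $N$ is the normalizing constant $N=\int_0^\infty |g(y)|^2\,\chi(y)\,y^{2\lambda+1}\,y^{-2\lambda-1}\,dy=\int_I|g(y)|^2\chi(y)\,dy$, which is strictly positive because $g$ is continuous and nonvanishing on $\operatorname{supp}\chi$. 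With this choice the integral in \eqref{Lp10b} equals $1$ by construction, and the integrand is supported in the compact set $\overline{I}\subset(0,\infty)$, hence the integral is absolutely convergent.

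The one point that requires care is verifying that $\phi$ so defined actually lies in $\mathcal{S}_\lambda(0,\infty)$. I would argue that the function $\Psi(y):=y^{2\lambda+1}\overline{g(y)}\chi(y)/N$ belongs to $\mathcal{S}_\lambda(0,\infty)$: it is smooth, compactly supported inside $(0,\infty)$ (so all seminorms $\eta^\lambda_{m,k}$ are finite — boundedness at infinity is trivial since $\Psi$ vanishes for large $y$, and near $0$ there is no issue since $\operatorname{supp}\Psi$ stays away from $0$). Here one should note that $\Psi$ has the factor $y^{2\lambda+1}$ which, combined with the $y^{-\lambda}$ appearing inside the operator $(\frac1y\frac{d}{dx})^k(y^{-\lambda}\,\cdot\,)$ in the definition of $\eta^\lambda_{m,k}$, still produces a smooth compactly supported function of $y$ on $(0,\infty)$; so $\Psi\in\mathcal{S}_\lambda(0,\infty)$. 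Then I would set $\phi=h_\lambda(\Psi)$; since $h_\lambda$ maps $\mathcal{S}_\lambda(0,\infty)$ into itself, $\phi\in\mathcal{S}_\lambda(0,\infty)$, and because $h_\lambda^{-1}=h_\lambda$ on $\mathcal{S}_\lambda(0,\infty)$ we get $h_\lambda(\phi)=h_\lambda(h_\lambda(\Psi))=\Psi$, which is exactly the function we wanted $h_\lambda(\phi)$ to be. Substituting back, $\int_0^\infty h_\lambda(\psi)(y)\,h_\lambda(\phi)(y)\,y^{-2\lambda-1}\,dy=\int_0^\infty g(y)\,\Psi(y)\,y^{-2\lambda-1}\,dy=1$.

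The main obstacle, such as it is, is purely the bookkeeping in the previous paragraph: making sure the weight $y^{-2\lambda-1}$ in \eqref{Lp10b} is absorbed correctly so that the normalizing integral is a genuine $L^2$-type quantity on $I$ (hence positive and finite), and checking that multiplication by $y^{2\lambda+1}$ and by a smooth compactly supported cutoff keeps one inside $\mathcal{S}_\lambda(0,\infty)$ despite the slightly unusual seminorms $\eta^\lambda_{m,k}$ built around the operator $x^{-\lambda}$. Everything else is formal, using only that $h_\lambda$ is an involutive isomorphism of $\mathcal{S}_\lambda(0,\infty)$. (Alternatively, if one prefers to avoid the cutoff, one may instead reference the argument in \cite[after Lemma 2.4]{BCR1} directly, where essentially this construction is carried out; the above is simply an explicit rendering of it.)
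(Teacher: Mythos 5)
Your construction is correct: taking $h_\lambda(\phi)=y^{2\lambda+1}\,\overline{h_\lambda(\psi)(y)}\,\chi(y)/N$ with a nonnegative smooth bump $\chi$ supported in an interval where $h_\lambda(\psi)$ does not vanish, observing that $C_c^\infty(0,\infty)\subset \mathcal{S}_\lambda(0,\infty)$ and that $h_\lambda$ is an involutive isomorphism of $\mathcal{S}_\lambda(0,\infty)$, does give \eqref{Lp10b} with an absolutely convergent integral. The paper offers no proof of its own, only the reference to the construction after Lemma 2.4 in \cite{BCR1}, which is essentially this same normalization argument, so your proposal matches the intended approach.
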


In order to see \eqref{Lp9} we need to show the next result.
\begin{Lem}\label{Lema2}
    Let $\lambda >0$. Suppose that $\psi ,\phi \in \mathcal{S}_\lambda (0,\infty )$ satisfy \eqref{Lp10b},
    being the integral absolutely convergent. If $f,g\in \mathcal{S}_\lambda (0,\infty )$, then,
    \begin{equation}\label{Lp11}
        \int_0^\infty f(x)g(x)dx
            =\int_0^\infty \int_0^\infty (f\#_\lambda \psi _{(t)} )(y)(g\#_\lambda \phi_{(t)} )(y)\frac{dydt}{t}.
    \end{equation}
\end{Lem}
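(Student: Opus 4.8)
The identity \eqref{Lp11} is a Hankel-convolution version of the Calderón reproducing formula, and the natural route is to move everything to the Hankel-transform side, where $\#_\lambda$ becomes (essentially) multiplication via the interchange formula \eqref{6.1}. First I would record how the dilation $\psi_{(t)}$ transforms under $h_\lambda$: a direct change of variables in the definition of $h_\lambda$ gives $h_\lambda(\psi_{(t)})(y) = t^{-\lambda} h_\lambda(\psi)(ty)$ for $t,y\in(0,\infty)$ (the weight $t^{\lambda+1}$ in $\psi_{(t)}$ is chosen precisely so that this clean homogeneity holds). Since $\psi,\phi\in\mathcal{S}_\lambda(0,\infty)$ and $h_\lambda$ is an isomorphism of $\mathcal{S}_\lambda(0,\infty)$, all Hankel transforms appearing are again in $\mathcal{S}_\lambda(0,\infty)$, so every integral below is absolutely convergent and Fubini is legitimate.

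Next I would compute the inner $y$-integral on the right-hand side of \eqref{Lp11} using Parseval's identity for the Hankel transform on $L^2(0,\infty)$, namely $\int_0^\infty u(y)v(y)\,dy = \int_0^\infty h_\lambda(u)(y)h_\lambda(v)(y)\,dy$. Applying this with $u = f\#_\lambda\psi_{(t)}$ and $v = g\#_\lambda\phi_{(t)}$, and then \eqref{6.1} to rewrite $h_\lambda(f\#_\lambda\psi_{(t)}) = y^{-\lambda}h_\lambda(f)\,h_\lambda(\psi_{(t)})$ and similarly for $g$, the inner integral becomes
\begin{equation*}
    \int_0^\infty (f\#_\lambda\psi_{(t)})(y)(g\#_\lambda\phi_{(t)})(y)\,dy
    = \int_0^\infty y^{-2\lambda} h_\lambda(f)(y)\,h_\lambda(g)(y)\,h_\lambda(\psi_{(t)})(y)\,h_\lambda(\phi_{(t)})(y)\,dy.
\end{equation*}
Substituting the homogeneity relation $h_\lambda(\psi_{(t)})(y)=t^{-\lambda}h_\lambda(\psi)(ty)$ and likewise for $\phi$, the right-hand side of \eqref{Lp11} becomes a double integral over $(y,t)$ of $t^{-2\lambda-1}y^{-2\lambda}h_\lambda(f)(y)h_\lambda(g)(y)h_\lambda(\psi)(ty)h_\lambda(\phi)(ty)$. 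Now I would apply Fubini (justified by the rapid decay of Hankel transforms of $\mathcal{S}_\lambda$ functions) and, for fixed $y$, perform the change of variable $s=ty$ in the $t$-integral; the Jacobian converts $t^{-2\lambda-1}\,dt$ together with the factor $y^{-2\lambda}$ into $s^{-2\lambda-1}\,ds$, and the $y$-dependence in $h_\lambda(\psi),h_\lambda(\phi)$ disappears. The $t$-integral thus factors out as $\int_0^\infty h_\lambda(\psi)(s)h_\lambda(\phi)(s)s^{-2\lambda-1}\,ds$, which equals $1$ by hypothesis \eqref{Lp10b}. What survives is $\int_0^\infty h_\lambda(f)(y)h_\lambda(g)(y)\,dy$, which by Parseval's identity again equals $\int_0^\infty f(x)g(x)\,dx$, giving \eqref{Lp11}.

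The only genuinely delicate point is the bookkeeping that makes Fubini and the change of variables rigorous: one must check that $f\#_\lambda\psi_{(t)}$ and $g\#_\lambda\phi_{(t)}$ lie in $L^2(0,\infty)$ for each $t$ (so Parseval applies), and that the resulting integrand over $(y,t)\in(0,\infty)^2$ is absolutely integrable. For the latter I would split the $t$-integral at $t=1$: near $t=\infty$ the decay of $h_\lambda(\psi)(ty)h_\lambda(\phi)(ty)$ of arbitrary polynomial order controls the factor $t^{-2\lambda-1}$, while near $t=0$ one uses that $h_\lambda(\psi)(s)$ and $h_\lambda(\phi)(s)$ vanish to high order as $s\to0$ — indeed, elements of $\mathcal{S}_\lambda(0,\infty)$ and their Hankel transforms behave like $x^\lambda$ times a smooth even function near the origin, so the product $h_\lambda(\psi)(ty)h_\lambda(\phi)(ty)$ is $O((ty)^{2\lambda})$, which beats $t^{-2\lambda-1}$ after pairing with the integrable $y$-factor $h_\lambda(f)h_\lambda(g)$. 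Once integrability is in hand, the rest is the routine manipulation sketched above. (Note that although the statement uses the convolution normalization associated to $L^1((0,\infty),x^\lambda dx)$, everything here takes place inside $\mathcal{S}_\lambda(0,\infty)$, where all the products and convolutions are well defined and the interchange formula \eqref{6.1} holds.)
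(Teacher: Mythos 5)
Your main computation coincides with the paper's proof: Parseval for $h_\lambda$ together with the interchange formula \eqref{6.1} and the dilation identity $h_\lambda(\psi_{(t)})(y)=t^{-\lambda}h_\lambda(\psi)(ty)$ reduce the inner $y$-integral to $\int_0^\infty h_\lambda(f)(y)h_\lambda(g)(y)(ty)^{-2\lambda}h_\lambda(\psi)(ty)h_\lambda(\phi)(ty)\,dy$, and then interchanging the order of integration, substituting $s=ty$, invoking \eqref{Lp10b} and applying Parseval once more gives \eqref{Lp11}. That skeleton is exactly the paper's.

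The gap is at the very point you flag as delicate: your integrability argument near $t=0$ does not close. With the bound $|h_\lambda(\psi)(ty)h_\lambda(\phi)(ty)|\leq C(ty)^{2\lambda}$, the integrand $t^{-2\lambda-1}y^{-2\lambda}|h_\lambda(f)(y)h_\lambda(g)(y)h_\lambda(\psi)(ty)h_\lambda(\phi)(ty)|$ is only controlled by $Ct^{-1}|h_\lambda(f)(y)h_\lambda(g)(y)|$, and $\int_0^\delta t^{-1}dt$ diverges; the factor $(ty)^{2\lambda}$ exactly cancels $t^{-2\lambda}$ but does not ``beat'' the remaining $1/t$. Moreover, the premise that $h_\lambda(\psi)$, $h_\lambda(\phi)$ vanish to high order at the origin is not granted by the hypotheses of this lemma: for a general element of $\mathcal{S}_\lambda(0,\infty)$ one has $h_\lambda(\psi)(s)\sim c\,s^\lambda$ with $c$ a multiple of $\int_0^\infty x^\lambda\psi(x)dx$, and the cancellation $\int_0^\infty x^\lambda\psi(x)dx=0$ is an assumption of Theorem \ref{ThBHankel}, not of this lemma (and nothing of the sort is assumed for $\phi$). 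The repair is simpler than the estimate you attempted: on the transform side apply Tonelli directly, since after the change of variables $s=ty$ the double integral of the absolute values factors as $\left(\int_0^\infty |h_\lambda(f)(y)h_\lambda(g)(y)|\,dy\right)\left(\int_0^\infty |h_\lambda(\psi)(s)h_\lambda(\phi)(s)|\,s^{-2\lambda-1}ds\right)$, and the second factor is finite precisely because \eqref{Lp10b} is assumed to be absolutely convergent. Note also that the paper secures the convergence of the space-side double integral in \eqref{Lp11} by a different device, namely H\"older's inequality combined with the scalar case of the bound \eqref{Lp1} already established in Section~\ref{subsec:2.1}; some such justification is needed if one wants the right-hand side of \eqref{Lp11} to be an absolutely convergent double integral rather than merely an iterated one.
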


\begin{proof}
    Let $f,g\in \mathcal{S}_\lambda (0,\infty )$.
    Note firstly that the integral in the right hand side of \eqref{Lp11} is absolutely convergent. Indeed, according to \eqref{Lp1} we get
    \begin{align*}
        \int_0^\infty \int_0^\infty |(f\#_\lambda \psi _{(t)} )(y)||(g\#_\lambda \phi _{(t)} )(y)|\frac{dydt}{t}
            &\leq \|\mathcal{W}_{\psi ,\mathbb{C}}^\lambda (f)\|_{L^p((0,\infty),H )} \|\mathcal{W}_{\phi ,\mathbb{C}}^\lambda (g)\|_{L^{p'}((0,\infty), H )}\\
            & \leq C\|f\|_{L^p(0,\infty )}\|g\|_{L^{p'}(0,\infty )}.
    \end{align*}

    Plancherel equality and the interchange formula for Hankel transforms \eqref{6.1} lead to
    \begin{align*}
        \int_0^\infty (f\#_\lambda \psi _{(t)} )(y)(g\#_\lambda \phi _{(t)} )(y)dy
            & =\int_0^\infty h_\lambda (f\#_\lambda \psi _{(t)} )(y)h_\lambda (g\#_\lambda \phi _{(t)} )(y)dy\\
            & =\int_0^\infty h_\lambda (f)(y)h_\lambda (g)(y)(ty)^{-2\lambda }h_  \lambda (\psi)(ty)h_\lambda (\phi)(ty)dy,\quad t\in (0,\infty ).
    \end{align*}
    Hence, it follows that
    \begin{align*}
        & \int_0^\infty \int_0^\infty (f\#_\lambda \psi _{(t)} )(y)(g\#_\lambda \phi _{(t)} )(y)\frac{dydt}{t} \\
        & \qquad = \int_0^\infty \int_0^\infty h_\lambda (f)(y)h_\lambda (g)(y)(ty)^{-2\lambda }h_\lambda (\psi)(ty)h_\lambda (\phi)(ty)\frac{dydt}{t}\\
        & \qquad = \int_0^\infty h_\lambda (f)(y)h_\lambda (g)(y)\int_0^\infty h_\lambda (\psi)(ty)h_\lambda (\phi)(ty)(ty)^{-2\lambda } \frac{dtdy}{t}
            = \int_0^\infty h_\lambda (f)(y)h_\lambda (g)(y)dy\\
        & \qquad =\int_0^\infty f(x)g(x)dx.
    \end{align*}
\end{proof}
An immediate consequence of Lemma \ref{Lema2} is the following.
\begin{Lem}\label{Lema3}
    Let $\B$ be a Banach space and $\lambda >0$. Suppose that $\psi ,\phi \in \mathcal{S}_\lambda (0,\infty )$ satisfy \eqref{Lp10b},
    being the integral absolutely convergent.
    If $f\in \mathcal{S}_\lambda (0,\infty )\otimes \mathbb{B}$ and $g\in \mathcal{S}_\lambda (0,\infty )\otimes \mathbb{B}^*$,
    then,
    $$\int_0^\infty \langle g(x),f(x)\rangle _{\mathbb{B}^*,\mathbb{B}}dx
        =\int_0^\infty \int_0^\infty  \langle (g\#_\lambda \phi _{(t)} )(x),(f\#_\lambda \psi _{(t)} )(x)\rangle _{\mathbb{B}^*,\mathbb{B}}\frac{dxdt}{t}.$$
\end{Lem}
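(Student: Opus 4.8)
The plan is to reduce the identity to Lemma~\ref{Lema2} by pure bilinearity, since $f$ and $g$ are finite sums of elementary tensors. Write $f=\sum_{i=1}^m f_i\otimes b_i$ with $f_i\in \mathcal{S}_\lambda (0,\infty )$ and $b_i\in \mathbb{B}$, and $g=\sum_{j=1}^k g_j\otimes S_j$ with $g_j\in \mathcal{S}_\lambda (0,\infty )$ and $S_j\in \mathbb{B}^*$. The Hankel convolution $\#_\lambda$ is applied only to the scalar factor of each tensor, so $(f\#_\lambda \psi_{(t)})(x)=\sum_{i=1}^m (f_i\#_\lambda \psi_{(t)})(x)\,b_i$ and $(g\#_\lambda \phi_{(t)})(x)=\sum_{j=1}^k (g_j\#_\lambda \phi_{(t)})(x)\,S_j$, for $t,x\in (0,\infty )$.

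First I would expand the left-hand side: putting $c_{ij}=\langle S_j,b_i\rangle_{\mathbb{B}^*,\mathbb{B}}$, the pairing gives $\langle g(x),f(x)\rangle_{\mathbb{B}^*,\mathbb{B}}=\sum_{i,j}c_{ij}\,g_j(x)f_i(x)$, hence $\int_0^\infty \langle g(x),f(x)\rangle_{\mathbb{B}^*,\mathbb{B}}\,dx=\sum_{i,j}c_{ij}\int_0^\infty f_i(x)g_j(x)\,dx$. Next I would expand the right-hand side in the same way, obtaining $\sum_{i,j}c_{ij}\int_0^\infty\int_0^\infty (f_i\#_\lambda \psi_{(t)})(x)(g_j\#_\lambda \phi_{(t)})(x)\,\frac{dx\,dt}{t}$; interchanging the finite sum with the integrals is harmless, and the absolute convergence of each inner double integral is exactly what was verified at the beginning of the proof of Lemma~\ref{Lema2}. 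Since $\psi,\phi$ satisfy \eqref{Lp10b} by hypothesis, applying Lemma~\ref{Lema2} to each pair $(f_i,g_j)$ turns every summand of the right-hand side into $\int_0^\infty f_i(x)g_j(x)\,dx$, and the two expressions coincide.

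There is essentially no obstacle in this argument; it is genuinely an immediate consequence of Lemma~\ref{Lema2}. The only points deserving a word of care are that $\#_\lambda$ acts coordinatewise on a tensor (so it commutes with the vector-valued structure and with the bilinear pairing), and that all interchanges of the finite sums with the integrals are legitimate, which is guaranteed by the integrability already established in Lemma~\ref{Lema2}. Neither the UMD hypothesis nor $\gamma$-radonifying operators play any role at this step.
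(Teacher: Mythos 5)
Your proof is correct and matches the paper's intent: the paper simply states Lemma~\ref{Lema3} as an immediate consequence of Lemma~\ref{Lema2}, and your bilinear expansion into elementary tensors, with the absolute convergence from Lemma~\ref{Lema2} justifying the interchanges, is exactly the argument being invoked.
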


Let $f\in \mathcal{S}_\lambda (0,\infty )\otimes \mathbb{B}$. Since $\mathcal{S}_\lambda (0,\infty )\otimes \mathbb{B}^*$ is dense in
$L^{p'}((0,\infty ),\mathbb{B}^*)$, according to \cite[Lemma 2.3]{GLY} we have that
$$\|f\|_{L^p((0,\infty ),\mathbb{B})}
    =\sup_{\substack{g\in \mathcal{S}_\lambda (0,\infty )\otimes \mathbb{B}^* \\ \|g\|_{L^{p'}((0,\infty ),\mathbb{B}^*)}\leq 1}}
        \left|\int_0^\infty \langle g(x),f(x)\rangle _{\mathbb{B}^*,\mathbb{B}}dx\right|.$$
By Lemma \ref{Lema1} we choose $\psi, \phi\in \mathcal{S}_\lambda (0,\infty )$ such that \eqref{Lp10b}
holds, being the integral absolutely convergent. Since $\B^*$ is UMD, it was proved in Section~\ref{subsec:2.1}
that the operator $\mathcal{W}_{\phi ,\mathbb{B}^*}^\lambda $ is bounded from $L^{p'}((0,\infty ),\mathbb{B}^*)$
into $L^{p'}((0,\infty ),\gamma (H, \mathbb{B}^*))$. According to Lemma \ref{Lema3} and \cite[Proposition 2.2]{HW} we get,
for every $g\in \mathcal{S}_\lambda (0,\infty )\otimes \mathbb{B}^*$,
\begin{align*}
    \left|\int_0^\infty \langle g(x),f(x)\rangle _{\mathbb{B}^*,\mathbb{B}}dx\right|
        = &\left|\int_0^\infty \int_0^\infty  \langle (g\#_\lambda \phi _{(t)} )(x),(f\#_\lambda \psi _{(t)} )(x)\rangle _{\mathbb{B}^*,\mathbb{B}}\frac{dxdt}{t}\right|\\
        \leq & \int_0^\infty \int_0^\infty  |\langle (g\#_\lambda \phi _{(t)} )(x),(f\#_\lambda \psi _{(t)} )(x)\rangle _{\mathbb{B}^*,\mathbb{B}}|\frac{dxdt}{t}\\
        \leq &\int_0^\infty \|\mathcal{W}_{\phi ,\mathbb{B}^*}^\lambda (g)(\cdot, x)\|_{\gamma (H,\mathbb{B}^*)}\|\mathcal{W}_{\psi ,\mathbb{B}}^\lambda (f)(\cdot, x)\|_{\gamma (H,\mathbb{B})}dx\\
        \leq &\|\mathcal{W}_{\phi ,\mathbb{B}^*}^\lambda (g)\|_{L^{p'}((0,\infty ), \gamma (H,\mathbb{B}^*))}\|\mathcal{W}_{\psi ,\mathbb{B}}^\lambda (f)\|_{L^p((0,\infty ), \gamma (H,\mathbb{B}))}\\
        \leq &C\|g\|_{L^{p'}((0,\infty ),\mathbb{B}^*)}\|\mathcal{W}_{\psi ,\mathbb{B}}^\lambda (f)\|_{L^p((0,\infty ), \gamma (H,\mathbb{B}))}.
\end{align*}
Hence,
$$\|f\|_{L^p((0,\infty ),\mathbb{B})}
    \leq C\|\mathcal{W}_{\psi ,\mathbb{B}}^\lambda (f)\|_{L^p((0,\infty ), \gamma (H,\mathbb{B}))}.$$
Thus the proof of Theorem \ref{ThBHankel} is finished.

%%%%%%%%%%%%%%%%%%%%%%%%%%%%%%%%%%%%%%%%%%%%%%%%%%%%%%%%%%%%%%%%%%%%%%%%%%%%%%%%%%%%%%%%%%%%%%%%%%%%%%%%%%%%%%%%%%%%
\section{Proof of Theorem~\ref{boundedness}}\label{sec:Proof2}
%%%%%%%%%%%%%%%%%%%%%%%%%%%%%%%%%%%%%%%%%%%%%%%%%%%%%%%%%%%%%%%%%%%%%%%%%%%%%%%%%%%%%%%%%%%%%%%%%%%%%%%%%%%%%%%%%%%%

%%%%%%%%%%%%%%%%%%%%%%%%%%%%%%%%%%%%%%%%%%%%%%%%%%%%%%%%%%%%%
\subsection{}\label{subsec:3.1}
%%%%%%%%%%%%%%%%%%%%%%%%%%%%%%%%%%%%%%%%%%%%%%%%%%%%%%%%%%%%%

In this section we prove that
\begin{equation}\label{16.1}
    \| G_{P,\B}^{\lambda,\beta}(f) \|_{L^p((0,\infty),\gamma(H,\B))}
        \leq C \|f\|_{L^p((0,\infty),\B)}, \quad f \in L^p((0,\infty),\B),
\end{equation}
for some $C>0$ independent of $f$.

We define the $g$-operator associated with the classical Poisson semigroup on $\mathbb{R}$ as follows
$$G_{P,\mathbb{B}}^\beta(f)(t,x)
    =t^\beta \partial _t^\beta \PP_t(f)(x),\quad x\in \mathbb{R} \mbox{ and } t\in (0,\infty ),$$
for every $f\in L^p(\mathbb{R},\mathbb{B})$.

By \cite[Proposition 1]{BCCFR1} there exists $C>0$ such that
$$\|G_{P,\mathbb{B}}^\beta (f)\|_{L^p(\mathbb{R}, \gamma (H,\mathbb{B}))}
    \leq C \|f\|_{L^p(\mathbb{R},\mathbb{B})},\quad f\in \mathcal{S} (\mathbb{R})\otimes \mathbb{B}.$$
The arguments developed in the proof of Theorem \ref{ThBHankel} allow us to show that
\begin{equation}\label{Lp12}
    \|G_{P,\mathbb{B}}^\beta (f)\|_{L^p(\mathbb{R}, \gamma (H,\mathbb{B}))}\leq \|f\|_{L^p(\mathbb{R},\mathbb{B})},\quad f\in L^p (\mathbb{R}, \mathbb{B}).
\end{equation}

In \cite[Lemma 1]{BCCFR1} it was established that
$$t^\beta \partial _t^\beta \PP_t(z)
    =\sum_{k=0}^{(m+1)/2}\frac{c_k}{t}\varphi ^k \left(\frac{z}{t}\right),\quad z\in \mathbb{R} \mbox{ and } t\in (0,\infty ),$$
where $m\in \mathbb{N}$ is such that $m-1\leq \beta <m$, and, for every $k\in \mathbb{N}$, $0\leq k\leq (m+1)/2$, $c_k\in \mathbb{C}$ and
$$\varphi ^k(z)
    =\int_0^\infty \frac{(1+v)^{m+1-2k}v^{m-\beta -1}}{((1+v)^2+z^2)^{m-k+1}}dv,\quad z\in \mathbb{R}.$$
By proceeding as in the proof of \cite[Lemma 1]{BCCFR1} we can obtain the analogous identity in the Bessel setting
\begin{equation}\label{16.2}
    t^\beta \partial _t^\beta P_t^\lambda (x,y)
        =\sum_{k=0}^{(m+1)/2}\frac{b_k^\lambda }{t^{2\lambda +1}}(xy)^\lambda \int_0^\pi (\sin \theta )^{2\lambda -1}\varphi^{\lambda ,k}\left(\frac{\sqrt{(x-y)^2+2xy(1-\cos \theta )}}{t}\right)d\theta ,
\end{equation}
where $m\in \mathbb{N}$ is such that $m-1\leq \beta <m$, and, for every $k\in \mathbb{N}$, $0\leq k\leq (m+1)/2$,
$$\varphi ^{\lambda ,k}(z)
    =\int_0^\infty \frac{(1+v)^{m+1-2k}v^{m-\beta -1}}{((1+v)^2+z^2)^{\lambda +m-k+1}}dv,\quad z\in (0,\infty ),$$
and
$$b_k^\lambda = \frac{2 \lambda(\lambda+1) \cdots (\lambda+m-k)}{(m-k)!} c_k.$$

Let $k\in \mathbb{N}$, $0\leq k\leq (m+1)/2$. We define, for every $f\in L^p(\mathbb{R},\mathbb{B})$,
$$\PP_{k}(f)(t,x)
    =\int_\mathbb{R}\varphi _t^k(x-y)f(y)dy,\quad t\in (0,\infty )\mbox{ and }x\in \mathbb{R}.$$
Let $f\in L^p((0,\infty),\mathbb{B})$. If $f_o$ denotes the odd extension of $f$ to $\mathbb{R}$, we write
\begin{align*}
    \PP_{k}(f_o)(t,x)
        = &\int_0^\infty \varphi _t^k(x-y)f(y)dy-\int_0^\infty \varphi _t^k(x+y)f(y)dy\\
        = & \PP_{k,1}(f)(t,x)- \PP_{k,2}(f)(t,x),\quad t,x\in (0,\infty ).
\end{align*}
We have that
\begin{align}\label{Lp13}
    & \|\varphi_t ^k(x+y)\|_H
        \leq \int_0^\infty (1+v)^{m+1-2k}v^{m-\beta -1} \nonumber \\
    & \qquad \times \left(\int_0^\infty \frac{1}{t^3}\frac{t^{4(m-k+1)}}{((1+v^2)t^2+(x+y)^2)^{2(m-k+1)}}dt\right)^{1/2}dv
        \leq  \frac{C}{x+y},\quad x,y\in (0,\infty ).
\end{align}
Since $\gamma (H,\mathbb{C})=H$,  we deduce that
\begin{align*}
    \|\PP_{k,2}(f)(\cdot,x)\|_{\gamma (H,\mathbb{B})}
        \leq &\int_0^\infty \|f(y)\|_\mathbb{B}\left\|\frac{1}{t}\varphi ^k\Big(\frac{x+y}{t}\Big)\right\|_Hdt
        \leq C\int_0^\infty \frac{\|f(y)\|_\mathbb{B}}{x+y}dy\\
        \leq & C  \left[H_0(\|f\|_\mathbb{B})(x)+H_\infty (\|f\|_\mathbb{B})(x)\right],\quad x\in (0,\infty ).
\end{align*}
Thus, according to \cite[p. 244, (9.9.1) and (9.9.2)]{HLP}, $\PP_{k,2}$ is a bounded operator from
$L^p((0,\infty ),\mathbb{B})$ into $L^p((0,\infty ),\gamma (H,\mathbb{B}))$.

We define, for every $f\in L^p((0,\infty ),\mathbb{B})$,
$$G_{P,\mathbb{B}}^{\beta,-}(f)(t,x)
    =\int_0^\infty t^\beta \partial _t^\beta \PP_t(x+y)f(y)dy,\quad t,x\in (0,\infty ).$$
Since $G_{P,\mathbb{B}}^{\beta, -}=\sum_{k=0}^{(m+1)/2}c_k \PP_{k,2}$,
we conclude that  $G_{P,\mathbb{B}}^{\beta, -}$ is a bounded operator from
$L^p((0,\infty ),\mathbb{B})$ into $L^p((0,\infty ),\gamma (H,\mathbb{B}))$.
Then, according to \eqref{Lp12}, if for every $f\in L^p((0,\infty ),\mathbb{B})$, we define
$$G_{P,\mathbb{B}}^{\beta,+}(f)(t,x)
    =\int_0^\infty t^\beta \partial _t^\beta \PP_t(x-y)f(y)dy,\quad t,x\in (0,\infty ),$$
the operator $G_{P,\mathbb{B}}^{\beta ,+}$ is also bounded from $L^p((0,\infty ),\mathbb{B})$ into $L^p((0,\infty ),\gamma (H,\mathbb{B}))$.

In order to prove \eqref{16.1} it is
enough to show that the difference $G_{P,\mathbb{B}}^{\lambda ,\beta}-G_{P,\mathbb{B}}^{\beta ,+}$
is bounded from $L^p((0,\infty ),\mathbb{B})$ into $L^p((0,\infty ),\gamma (H,\mathbb{B}))$.

By proceeding as in \eqref{Lp13} we get, for every $x \in (0,\infty)$,
\begin{equation}\label{Lp14}
    \|t^\beta \partial _t^\beta \PP_t(x-y)\|_H\leq \frac{C}{|x-y|}\leq C\left\{\begin{array}{ll}
                    \displaystyle \frac{1}{x},&\displaystyle 0<y<\frac{x}{2},\\
                    &\\
                    \displaystyle \frac{1}{y},&\displaystyle y>2x.
                    \end{array}
    \right.
\end{equation}
We split $P_t^\lambda (x,y)$, $t,x,y\in (0,\infty )$, as follows
\begin{align*}
    P_t^\lambda (x,y)
        =&\frac{2\lambda (xy)^\lambda t}{\pi }\int_0^{\pi /2}\frac{(\sin \theta )^{2\lambda -1}}{((x-y)^2+t^2+2xy(1-\cos \theta ))^{\lambda +1}}d\theta \\
         & +\frac{2\lambda (xy)^\lambda t}{\pi }\int_{\pi /2}^\pi \frac{(\sin \theta )^{2\lambda -1}}{((x-y)^2+t^2+2xy(1-\cos \theta ))^{\lambda +1}}d\theta\\
        =&P_t^{\lambda ,1}(x,y)+P_t^{\lambda ,2}(x,y).
\end{align*}
From \eqref{16.2} we have that
$$\|t^\beta \partial _t^\beta P_t^{\lambda ,2}(x,y)\|_H
    \leq C(xy)^\lambda \int_{\pi /2}^\pi (\sin \theta )^{2\lambda -1}\sum_{k=0}^{(m+1)/2}\left\|\frac{1}{t^{2\lambda +1}}\varphi ^{\lambda ,k}\left(\frac{\sqrt{(x-y)^2+2xy(1-\cos \theta )}}{t}\right)\right\|_Hd\theta ,$$
and, for every $k\in \mathbb{N}$, $0\leq k\leq (m+1)/2$,
\begin{align*}
    & \left\|\frac{1}{t^{2\lambda +1}}\varphi ^{\lambda ,k}\left(\frac{\sqrt{(x-y)^2+2xy(1-\cos \theta )}}{t}\right)\right\|_H \\
    & \qquad \leq C\int_0^\infty (1+v)^{m+1-2k}v^{m-\beta -1}\left(\int_0^\infty \frac{t^{4(\lambda +1+m-k)-4\lambda -3}}{((1+v^2)t^2+(x+y)^2)^{2(\lambda +m-k+1)}}dt\right)^{1/2}dv\\
    & \qquad \leq \frac{C}{(x+y)^{2\lambda +1}}\int_0^\infty \frac{v^{m-\beta -1}}{(1+v)^m}dv\left(\int_0^\infty \frac{u^{4(m-k)+1}}{(1+u)^{4(\lambda +m-k+1)}}du\right)^{1/2} \\
    & \qquad \leq \frac{C}{(x+y)^{2\lambda +1}},\quad x,y\in (0,\infty )\mbox{ and }\theta \in (\pi/2,\pi ).
\end{align*}
Hence,
\begin{equation}\label{Lp15}
    \|t^\beta \partial _t^\beta P_t^{\lambda ,2}(x,y)\|_H\leq C\frac{(xy)^\lambda}{(x+y)^{2\lambda +1}}\leq \frac{C}{x+y},\quad x,y\in (0,\infty ).
\end{equation}
Similar manipulations lead to
\begin{equation}\label{Lp16}
    \|t^\beta \partial _t^\beta P_t^{\lambda ,1}(x,y)\|_H\leq \frac{C}{|x-y|}\leq C\left\{\begin{array}{ll}
                    \displaystyle \frac{1}{x},&\displaystyle 0<y<x/2,\\
                    &\\
                    \displaystyle \frac{1}{y},&\displaystyle y>2x>0.\\
                    \end{array}
    \right.
\end{equation}
 We decompose $t^\beta \partial _t^\beta P_t^{\lambda ,1}(x,y)$ as follows:
\begin{align*}
    & t^\beta \partial _t^\beta P_t^{\lambda ,1}(x,y)
        =\sum_{k=0}^{(m+1)/2}\frac{b_k^\lambda }{t^{2\lambda +1}}(xy)^\lambda \left\{\int_0^{\pi /2}[(\sin \theta)^{2\lambda -1}-\theta ^{2\lambda -1}]\varphi ^{\lambda ,k}\left(\frac{\sqrt{(x-y)^2+2xy(1-\cos \theta )}}{t}\right)d\theta \right.\\
    & \quad \quad + \int_0^{\pi /2}\theta ^{2\lambda -1}\left[\varphi ^{\lambda ,k}\left(\frac{\sqrt{(x-y)^2+2xy(1-\cos \theta )}}{t}\right)- \varphi ^{\lambda ,k}\left(\frac{\sqrt{(x-y)^2+xy\theta ^2}}{t}\right)\right]d\theta \\
    & \quad \quad -\left. \int_{\pi /2}^\infty \theta ^{2\lambda -1}\varphi ^{\lambda ,k}\left(\frac{\sqrt{(x-y)^2+xy\theta ^2}}{t}\right)d\theta +\int_0^\infty \theta ^{2\lambda -1}\varphi ^{\lambda ,k}\left(\frac{\sqrt{(x-y)^2+xy\theta ^2}}{t}\right)d\theta \right\}\\
    & \quad =\sum_{k=0}^{(m+1)/2}b_k^\lambda [R_1^{\lambda ,k}(t,x,y)+R_2^{\lambda ,k}(t,x,y)+R_3^{\lambda ,k}(t,x,y)+R_4^{\lambda ,k}(t,x,y)],\quad t,x,y\in (0,\infty ).
\end{align*}
Let $k \in \N$, $0 \leq k \leq (m+1)/2$.
By using the mean value theorem we obtain
\begin{align}\label{Lp17}
    \|R_1^{\lambda ,k}(\cdot,x,y)\|_H
        \leq & C(xy)^\lambda \int_0^{\pi /2}\theta ^{2\lambda +1}\int_0^\infty (1+v)^{m+1-2k}v^{m-\beta -1}\nonumber\\
        &\times \left(\int_0^\infty \frac{t^{4(\lambda +1+m-k)-4\lambda-3}}{((1+v)^2t^2+(x-y)^2+xy\theta ^2)^{2(\lambda +m-k+1)}}dt\right)^{1/2}dvd\theta \nonumber\\
        \leq & C(xy)^\lambda \int_0^{\pi /2}\frac{\theta ^{2\lambda +1}}{((x-y)^2+xy\theta ^2)^{\lambda +1/2}}d\theta \leq \frac{C}{x},\quad 0<\frac{x}{2}<y<2x,
\end{align}
and
\begin{align}\label{Lp18}
    & \|R_2^{\lambda ,k}(\cdot,x,y)\|_H
        \leq C(xy)^\lambda \int_0^{\pi /2}\theta ^{2\lambda -1}\int_0^\infty (1+v)^{m+1-2k}v^{m-\beta -1} \Big\{\int_0^\infty t^{4(m-k)+1}\nonumber\\
    & \qquad \qquad \times  \Big|\frac{1}{((1+v)^2t^2+(x-y)^2+2xy(1-\cos \theta ))^{\lambda +m-k+1}} \nonumber\\
    & \qquad \qquad - \frac{1}{((1+v)^2t^2+(x-y)^2+xy\theta ^2)^{\lambda +m-k+1}}\Big|^2dt \Big\}^{1/2}dvd\theta \nonumber\\
    & \qquad \leq  C(xy)^\lambda  \int_0^{\pi /2}\theta ^{2\lambda -1}\int_0^\infty (1+v)^{m+1-2k}v^{m-\beta -1} \nonumber\\
    & \qquad \qquad \times \left\{\int_0^\infty t^{4(m-k)+1}\left(\frac{xy\theta ^4}{((1+v)^2t^2+(x-y)^2+xy\theta ^2)^{\lambda +m-k+2}}\right)^2dt\right\}^{1/2}dvd\theta \nonumber\\
    & \qquad \leq  C(xy)^{\lambda+1} \int_0^{\pi /2}\frac{\theta ^{2\lambda +3}}{((x-y)^2+xy\theta ^2)^{\lambda +3/2}}d\theta
        \leq  \frac{C}{x},\quad 0<\frac{x}{2}<y<2x.
\end{align}
We have also that
\begin{align}\label{Lp19}
    \|R_3^{\lambda ,k}(\cdot,x,y)\|_H
        \leq & C(xy)^\lambda \int_{\pi /2}^\infty \theta ^{2\lambda -1}\int_0^\infty (1+v)^{m+1-2k}v^{m-\beta -1}\nonumber\\
             &\times \left(\int_0^\infty \frac{t^{4(m-k)+1}}{((1+v)^2t^2+(x-y)^2+xy\theta ^2)^{2(\lambda +m-k+1)}}dt\right)^{1/2}dvd\theta \nonumber\\
        \leq&C(xy)^\lambda \int_{\pi /2}^\infty \frac{\theta ^{2\lambda -1}}{((x-y)^2+xy\theta ^2)^{\lambda +1/2}}d\theta
        \leq \frac{C}{x},\quad 0<\frac{x}{2}<y<2x.
\end{align}
Finally, we get that
\begin{align*}
    & \int_0^\infty \theta^{2\lambda -1}\varphi ^{\lambda ,k}\left(\frac{\sqrt{(x-y)^2+xy\theta ^2}}{t}\right)d\theta \\
    & \qquad = \int_0^\infty \theta ^{2\lambda -1}\int_0^\infty \frac{(1+v)^{m+1-2k}v^{m-\beta -1}}{((1+v)^2+[(x-y)^2+xy \theta^2]/t^2)^{\lambda +m-k+1}}dvd\theta \\
    & \qquad = t^{2(\lambda +m-k+1)}\int_0^\infty (1+v)^{m+1-2k}v^{m-\beta -1}\int_0^\infty \frac{\theta ^{2\lambda -1}}{((1+v)^2t^2+(x-y)^2+xy\theta ^2)^{\lambda +m-k+1}}d\theta dv\\
    & \qquad = \frac{t^{2(\lambda +m-k+1)}}{(xy)^{\lambda }}\int_0^\infty \frac{(1+v)^{m+1-2k}v^{m-\beta -1}}{((1+v)^2t^2+(x-y)^2)^{m-k+1}}dv \int_0^\infty \frac{u^{2\lambda -1}}{(1+u^2)^{\lambda +m-k+1}}du\\
    & \qquad = \frac{(m-k)!}{2 \lambda (\lambda+1) \cdots (\lambda+m-k)}\frac{t^{2\lambda}}{(xy)^\lambda}\varphi ^k\Big(\frac{x-y}{t}\Big).
\end{align*}
Then,
\begin{equation}\label{Lp20}
    \sum_{k=0}^{(m+1)/2}b_k^\lambda R_4^{\lambda ,k}(t,x,y)
        =t^\beta \partial _t^ \beta \PP_t(x-y),\quad t,x,y\in (0,\infty ).
\end{equation}
By putting together \eqref{Lp14}-\eqref{Lp20} we conclude that $G_{P,\mathbb{B}}^{\lambda ,\beta}-G_{P,\mathbb{B}}^{\beta ,+}$
is bounded from $L^p((0,\infty ),\mathbb{B})$ into $L^p((0,\infty ),\gamma (H,\mathbb{B}))$, and hence
$G_{P,\mathbb{B}}^{\lambda ,\beta}$ is bounded from $L^p((0,\infty ),\mathbb{B})$ into $L^p((0,\infty ),\gamma (H,\mathbb{B}))$.

%%%%%%%%%%%%%%%%%%%%%%%%%%%%%%%%%%%%%%%%%%%%%%%%%%%%%%%%%%%%%
\subsection{}\label{subsec:3.2}
%%%%%%%%%%%%%%%%%%%%%%%%%%%%%%%%%%%%%%%%%%%%%%%%%%%%%%%%%%%%%

We are going to show that there exists $C>0$ such that, for every $f\in L^p((0,\infty ),\mathbb{B})$,
\begin{equation}\label{Lp21}
    \|f\|_{L^p((0,\infty ),\mathbb{B})}\leq C\|G_{P,\mathbb{B}}^{\lambda ,\beta }(f)\|_{L^p((0,\infty ),\gamma (H,\mathbb{B}))}.
\end{equation}
Since $G_{P,\mathbb{B}}^{\lambda ,\beta}$ is bounded from $L^p((0,\infty ),\mathbb{B})$ into
$L^p((0,\infty ),\gamma (H,\mathbb{B}))$ and $\mathcal{S}_\lambda (0,\infty )\otimes \mathbb{B}$ is
a dense subspace of $L^p((0,\infty ),\mathbb{B})$, \eqref{Lp21} holds for every $f\in L^p((0,\infty ),\mathbb{B})$
whenever it is true for every $f\in \mathcal{S}_\lambda (0,\infty )\otimes \mathbb{B}$.

By proceeding as in Section~\ref{subsec:2.2}, the inequality in \eqref{Lp21} can be proved as a
consequence of a polarization identity involving the operator $G_{P,\mathbb{B}}^{\lambda ,\beta}$.
To show this equality we need previously to establish the following.

\begin{Lem}\label{Lem4}
    Let $\lambda$, $\beta>0$. Then, for every $f \in \mathcal{S}_\lambda(0,\infty)$,
    $$h_\lambda\left( t^\beta \partial_t^\beta P_t^\lambda f \right)(x)
        = e^{i\pi \beta }(tx)^\beta e^{-xt} h_\lambda(f)(x), \quad t,x \in (0,\infty).$$
\end{Lem}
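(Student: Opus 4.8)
The plan is to transfer the identity to the Hankel transform side, where $P_t^\lambda$ acts as multiplication by $e^{-tx}$ and the Segovia--Wheeden $\beta$-derivative becomes elementary.

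\textbf{First step: the transform of the Poisson semigroup.} I would record that, for $f\in\mathcal{S}_\lambda(0,\infty)$,
$$h_\lambda(P_u^\lambda f)(x)=e^{-ux}h_\lambda(f)(x),\qquad u,x\in(0,\infty).$$
Indeed, $f$ and $K_{(u)}^\lambda$ lie in $L^1((0,\infty),x^\lambda dx)$, so the interchange formula \eqref{6.1} gives $h_\lambda(P_u^\lambda f)=x^{-\lambda}h_\lambda(K_{(u)}^\lambda)h_\lambda(f)$; a change of variables in the Hankel integral yields the dilation rule $h_\lambda(\psi_{(u)})(x)=u^{-\lambda}h_\lambda(\psi)(ux)$, and combining it with the classical evaluation $h_\lambda(K^\lambda)(x)=x^\lambda e^{-x}$ one obtains $h_\lambda(K_{(u)}^\lambda)(x)=x^\lambda e^{-ux}$, hence the displayed identity. (Equivalently, this is just the statement that $\{P_t^\lambda\}_{t>0}=\{e^{-t\sqrt{\Delta_\lambda}}\}_{t>0}$, with $\Delta_\lambda=h_\lambda(x^2h_\lambda(\cdot))$ on $\mathcal{S}_\lambda(0,\infty)$.)

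\textbf{Second step: the fractional derivative on the transform side.} Fix $m\in\mathbb{N}$ with $m-1\le\beta<m$ and apply $h_\lambda$ to
$$\partial_t^\beta P_t^\lambda f=\frac{e^{-i\pi(m-\beta)}}{\Gamma(m-\beta)}\int_0^\infty s^{m-\beta-1}\,\partial_t^m P_{t+s}^\lambda f\,ds.$$
From the first step, $P_u^\lambda f=h_\lambda\bigl(e^{-ux}h_\lambda(f)\bigr)$, so $u\mapsto P_u^\lambda f$ is smooth as an $L^2(0,\infty)$-valued map with $\partial_u^m P_u^\lambda f=h_\lambda\bigl((-x)^m e^{-ux}h_\lambda(f)\bigr)$ and $\|\partial_t^m P_{t+s}^\lambda f\|_2=\|x^m e^{-(t+s)x}h_\lambda(f)\|_2$; since this is integrable against $s^{m-\beta-1}$ near $s=0$ (because $m-\beta>0$) and at $s=\infty$ (exponential decay), the integral above converges as a Bochner integral in $L^2(0,\infty)$. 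As $h_\lambda$ is a bounded linear operator on $L^2(0,\infty)$, it passes inside, and using $\int_0^\infty s^{m-\beta-1}e^{-sx}\,ds=\Gamma(m-\beta)x^{-(m-\beta)}$ together with $e^{-i\pi(m-\beta)}(-1)^m=e^{i\pi\beta}$,
$$h_\lambda\bigl(\partial_t^\beta P_t^\lambda f\bigr)(x)=\frac{e^{-i\pi(m-\beta)}(-1)^m}{\Gamma(m-\beta)}\,x^m e^{-tx}h_\lambda(f)(x)\,\Gamma(m-\beta)\,x^{-(m-\beta)}=e^{i\pi\beta}x^\beta e^{-tx}h_\lambda(f)(x).$$
Multiplying by $t^\beta$ gives the asserted formula, once one notes that the pointwise Segovia--Wheeden integral defining $\partial_t^\beta P_t^\lambda f$ agrees a.e.\ with the $L^2$-valued integral used here — immediate since $P_u^\lambda f$ is jointly smooth in $(x,u)$ with good decay.

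The one point that needs care is the interchange of $h_\lambda$ with the $s$-integral (and, behind it, with the $t$-derivatives); it is routine once one has the $L^2$ (or pointwise) size estimates for $\partial_t^m P_t^\lambda(x,y)$, of the kind underlying the representation \eqref{16.2}. Everything else is an explicit computation.
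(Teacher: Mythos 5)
Your proposal is correct and is essentially the paper's route: both arguments pass to the Hankel transform side, where $P_t^\lambda$ acts as multiplication by $e^{-tx}$, evaluate the Segovia--Wheeden derivative of $e^{-tx}$ explicitly (picking up $e^{i\pi\beta}x^\beta$), and reduce the whole matter to justifying the interchange of $h_\lambda$ with $\partial_t^\beta$. The differences are in the technical wrapping. You derive $h_\lambda(P_t^\lambda f)=e^{-tx}h_\lambda(f)$ from the interchange formula \eqref{6.1}, the dilation rule and $h_\lambda(K^\lambda)(x)=x^\lambda e^{-x}$, while the paper quotes \cite{EMOT2}; and you justify the interchange by viewing $u\mapsto P_u^\lambda f$ as a smooth $L^2(0,\infty)$-valued map, so that the $s$-integral is a Bochner integral in $L^2$ and the bounded operator $h_\lambda$ passes inside, whereas the paper argues pointwise: it bounds $\partial_t^m P_{t+s}^\lambda(x,y)$ through the explicit formula behind \eqref{16.2} (from \cite{GLLNU}), checks that $\partial_t^\beta P_t^\lambda f\in L^1(0,\infty)$, and differentiates under the Hankel integral using the boundedness of $\sqrt{z}\,J_{\lambda-1/2}(z)$. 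Your $L^2$ argument is adequate for how the lemma is used (Lemma~\ref{Lem5} goes through Plancherel), with two small points to tidy: first, the decay of $\|x^m e^{-(t+s)x}h_\lambda(f)\|_{L^2(0,\infty)}$ as $s\to\infty$ is polynomial, of order $s^{-m-\lambda-1/2}$, not exponential (the $x$-integration reaches down to $0$, where $e^{-sx}$ is not small); this still makes $s^{m-\beta-1}\|\partial_t^m P_{t+s}^\lambda f\|_{L^2}$ integrable at infinity, so the conclusion stands. Second, your route gives the identity a priori only for a.e.\ $x$, and the identification of the pointwise Segovia--Wheeden integral with the $L^2$-valued one, which you flag as routine, is precisely what the paper's pointwise kernel estimates are there to supply; spelling out the domination (as the paper does) or invoking continuity of both sides in $x$ closes that gap.
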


\begin{proof}
    Let $f \in \mathcal{S}_\lambda(0,\infty)$. We have that (see \cite[\S 8.5 (19)]{EMOT2})
    $$h_\lambda(P_t^\lambda f)(x)
        = e^{-xt} h_\lambda(f)(x), \quad t,x \in (0,\infty).$$
    We choose $m \in \N$ such that $m-1 \leq \beta < m$. It is not hard to see that
    $\partial_t^\beta e^{-xt}=e^{i\pi \beta}x^\beta e^{-xt}$, $t,x \in (0,\infty)$. Then,
    $$\partial_t^\beta h_\lambda \left( P_t^\lambda f \right)(x)
        = e^{i\pi \beta }x^\beta e^{-xt} h_\lambda(f)(x), \quad t,x \in (0,\infty).$$
    According to \cite[(4.6)]{GLLNU} we can write, for every $t,x,y \in (0,\infty)$ and $\theta \in (0,\pi)$,
    \begin{align*}
        & \partial_t^m \left[ \frac{t}{\left[ (x-y)^2 + 2xy(1-\cos \theta) + t^2\right]^{\lambda+1}} \right]
            = - \frac{1}{2\lambda} \partial_t^{m+1} \left[ \frac{1}{\left[ (x-y)^2 + 2xy(1-\cos \theta) + t^2\right]^{\lambda}} \right] \\
        & \qquad = \frac{1}{2}\sum_{k =0}^{(m+1)/2} (-1)^{m-k} E_{m+1,k} t^{m+1-2k}
                        \frac{(\lambda+1)(\lambda+2) \cdot \dots \cdot (\lambda+m-k)}{\left[ (x-y)^2 + 2xy(1-\cos \theta) + t^2\right]^{\lambda+m-k+1}},
    \end{align*}
    where
    $$E_{m+1,k}
        = \frac{2^{m+1-2k}(m+1)!}{k! (m+1-2k)!}, \quad 0 \leq k \leq \frac{m+1}{2}.$$
    Hence, $\partial_t^m \left[ t/\left[ (x-y)^2 + 2xy(1-\cos \theta) + t^2\right]^{\lambda+1}\right]$ is continuous in
    $(t,x,y,\theta) \in (0,\infty)^3 \times (0,\pi)$. Moreover, for each $t,x,y \in (0,\infty)$ and $\theta \in (0,\pi)$,
    \begin{align*}
        & \left|\partial_t^m \left[ \frac{t}{\left[ (x-y)^2 + 2xy(1-\cos \theta) + t^2\right]^{\lambda+1}} \right] \right|
            \leq \frac{C}{\left[(x-y)^2 +t^2\right]^{\lambda+(m+1)/2}}.
    \end{align*}
    Then,
    $$\left| \partial_t^m P_{t+s}^\lambda(f)(x) \right|
        \leq C \int_0^\infty |f(y) |\frac{(xy)^\lambda}{\left[(x-y)^2 +(t+s)^2\right]^{\lambda+(m+1)/2}} dy, \quad t,x \in (0,\infty),$$
    and $\partial_t^\beta P_t^\lambda (f) \in L^1(0,\infty)$, $t>0$.
    Since the function $\sqrt{z} J_\nu(z)$ is bounded on $(0,\infty)$
    when $\nu>-1/2$, the derivation under the integral sing is justified and we get
    $$h_\lambda\left( \partial_t^\beta P_t^\lambda f \right)(x)
        = \partial_t^\beta h_\lambda\left(P_t^\lambda(f)\right)(x)
        = e^{i\pi \beta }x^\beta e^{-xt} h_\lambda(f)(x), \quad t,x \in (0,\infty).$$
\end{proof}

\begin{Lem}\label{Lem5}
    Let $\B$ be a UMD Banach space and $\lambda$, $\beta>0$. If $f \in \mathcal{S}_\lambda(0,\infty) \otimes \B$ and
    $g \in \mathcal{S}_\lambda(0,\infty) \otimes \B^*$, then
    \begin{equation}\label{Lp22}
        \int_0^\infty \langle g(x), f(x)\rangle_{\B^*,\B} dx
            = \frac{e^{i2\pi \beta}2^{2\beta}}{\G(2\beta)} \int_0^\infty \int_0^\infty \langle t^\beta \partial_t^\beta P_t^\lambda(g)(x) , t^\beta \partial_t^\beta P_t^\lambda(f)(x) \rangle_{\B^*,\B} \frac{dtdx}{t}.
    \end{equation}
\end{Lem}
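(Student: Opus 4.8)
The plan is to pass to the Hankel transform side, where by Lemma~\ref{Lem4} the operator $t^\beta\partial_t^\beta P_t^\lambda$ acts as multiplication by $e^{i\pi\beta}(tx)^\beta e^{-xt}$, and then to integrate the variable $t$ out by means of an elementary Gamma integral. Since the pairing $\langle\cdot,\cdot\rangle_{\B^*,\B}$ is bilinear and $P_t^\lambda$ acts coordinatewise on $\mathcal{S}_\lambda(0,\infty)\otimes\B$, it suffices to treat the case $\B=\mathbb{C}$, that is, to prove that for all $a,b\in\mathcal{S}_\lambda(0,\infty)$
$$\int_0^\infty a(x)b(x)\,dx=\frac{e^{i2\pi\beta}2^{2\beta}}{\G(2\beta)}\int_0^\infty\int_0^\infty\big(t^\beta\partial_t^\beta P_t^\lambda b\big)(x)\big(t^\beta\partial_t^\beta P_t^\lambda a\big)(x)\,\frac{dt\,dx}{t};$$
writing $f=\sum_ja_j\otimes u_j$ and $g=\sum_kb_k\otimes v_k$, with $a_j,b_k\in\mathcal{S}_\lambda(0,\infty)$, $u_j\in\B$ and $v_k\in\B^*$, the general identity \eqref{Lp22} then follows by forming the corresponding linear combinations with scalar coefficients $\langle v_k,u_j\rangle_{\B^*,\B}$.

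To establish the scalar identity I would first record that, by Lemma~\ref{Lem4} together with the $L^1$-bounds obtained in its proof, $t^\beta\partial_t^\beta P_t^\lambda a\in L^1(0,\infty)\cap L^2(0,\infty)$ for each $t>0$, with Hankel transform $e^{i\pi\beta}(tx)^\beta e^{-xt}h_\lambda(a)(x)$, and similarly for $b$. Applying Plancherel's identity for the Hankel transform, $\int_0^\infty u(x)v(x)\,dx=\int_0^\infty h_\lambda(u)(x)h_\lambda(v)(x)\,dx$ for $u,v\in L^2(0,\infty)$ (which holds without conjugation because the kernel of $h_\lambda$ is real and $h_\lambda^{-1}=h_\lambda$), to the inner integral in $x$, and using Lemma~\ref{Lem4} for $a$ and $b$, that inner integral equals $e^{i2\pi\beta}t^{2\beta}\int_0^\infty x^{2\beta}e^{-2xt}h_\lambda(b)(x)h_\lambda(a)(x)\,dx$. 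I would then integrate against $dt/t$ and interchange the two integrations: this is justified by Fubini's theorem because, performing the $t$-integral first, $\int_0^\infty t^{2\beta-1}e^{-2xt}\,dt=\G(2\beta)(2x)^{-2\beta}$, so the integrand of the resulting double integral reduces to $\G(2\beta)2^{-2\beta}|h_\lambda(b)(x)h_\lambda(a)(x)|$, which is integrable on $(0,\infty)$ by the rapid decay of $h_\lambda(a),h_\lambda(b)\in\mathcal{S}_\lambda(0,\infty)$. The factor $x^{2\beta}$ cancels against $(2x)^{-2\beta}$, a last application of Plancherel's identity replaces the remaining $x$-integral by $\int_0^\infty a(x)b(x)\,dx$, and after rearranging one recovers the displayed scalar identity, the normalizing constant being accounted for by the square of the factor $e^{i\pi\beta}$ from Lemma~\ref{Lem4} and by $\G(2\beta)2^{-2\beta}$ from the Gamma integral.

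The argument is short, and the only delicate points are the integrability verifications underlying the two uses of Plancherel's identity and the interchange of integrations; these rely solely on Lemma~\ref{Lem4} (in particular on $t^\beta\partial_t^\beta P_t^\lambda a\in L^1\cap L^2$ for each $t$) and on the Schwartz-type decay of $h_\lambda(a)$ and $h_\lambda(b)$. No properties of $\B$ beyond the availability of the dense test spaces $\mathcal{S}_\lambda(0,\infty)\otimes\B\subset L^p((0,\infty),\B)$ and $\mathcal{S}_\lambda(0,\infty)\otimes\B^*\subset L^{p'}((0,\infty),\B^*)$ are used here; the UMD hypothesis on $\B$ intervenes only afterwards, when \eqref{Lp22} is combined with an $L^p$–$L^{p'}$ duality argument as in Section~\ref{subsec:2.2} in order to deduce \eqref{Lp21}.
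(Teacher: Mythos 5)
Your argument is correct and follows essentially the same route as the paper's proof: reduce to the scalar case, apply Hankel--Plancherel together with Lemma~\ref{Lem4}, interchange the integrations, and evaluate $\int_0^\infty t^{2\beta-1}e^{-2xt}\,dt=\Gamma(2\beta)(2x)^{-2\beta}$; your Tonelli justification of the interchange (via the rapid decay of $h_\lambda(a),h_\lambda(b)$) simply replaces the paper's appeal to the $L^p$-boundedness of $G_{P,\mathbb{C}}^{\lambda,\beta}$ from Section~\ref{subsec:3.1} and is equally valid. Note only that the computation actually gives the double integral as $e^{i2\pi\beta}\Gamma(2\beta)2^{-2\beta}\int_0^\infty f(x)g(x)\,dx$, so solving for $\int fg$ produces the phase $e^{-i2\pi\beta}$ rather than the $e^{i2\pi\beta}$ displayed in \eqref{Lp22}; this is a typo-level discrepancy already present in the paper's own proof and is immaterial for the subsequent use of the lemma.
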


\begin{proof}
    It is enough to show \eqref{Lp22} when $f,g \in \mathcal{S}_\lambda(0,\infty)$ and $\B=\C$. Let $f,g \in \mathcal{S}_\lambda(0,\infty)$.

    Since $\mathbb{C}$ is a UMD Banach space, as it was proved in Section~\ref{subsec:3.1}, the operator $G_{P,\mathbb{C}}^{\lambda,\beta}$
    is bounded from $L^p(0,\infty)$ into $L^p((0,\infty),H)$, $1<p<\infty$. Hence, the integral in the right hand side of \eqref{Lp22}
    is absolutely convergent.

    As $h_\lambda$ is an isometry in $L^2(0,\infty)$ (\cite[p. 214 and Theorem 129]{Ti}), Lemma~\ref{Lem4} implies
    that $t^\beta \partial_t^\beta P_t^\lambda(f) \in L^2(0,\infty)$ and $t^\beta \partial_t^\beta P_t^\lambda(g) \in L^2(0,\infty)$,
    for every $t>0$. Plancherel equality for Hankel transforms and Lemma~\ref{Lem4} lead to
    \begin{align*}
        & \int_0^\infty \int_0^\infty  t^\beta \partial_t^\beta P_t^\lambda(f)(x) t^\beta \partial_t^\beta P_t^\lambda(g)(x)\frac{dtdx}{t}
            = \int_0^\infty \int_0^\infty  t^\beta \partial_t^\beta P_t^\lambda(f)(x) t^\beta \partial_t^\beta P_t^\lambda(g)(x)\frac{dxdt}{t} \\
        & \qquad = e^{i2\pi\beta} \int_0^\infty \int_0^\infty  (tx)^{2\beta} e^{-2xt} h_\lambda(f)(x) h_\lambda(g)(x) \frac{dxdt}{t} \\
        & \qquad = e^{i2\pi\beta} \int_0^\infty h_\lambda(f)(x) h_\lambda(g)(x) \int_0^\infty  (tx)^{2\beta} e^{-2xt}  \frac{dtdx}{t} \\
        & \qquad = e^{i2\pi\beta}\frac{\G(2\beta)}{2^{-2\beta}} \int_0^\infty h_\lambda(f)(x) h_\lambda(g)(x)dx
            = e^{i2\pi\beta} \frac{\G(2\beta)}{2^{-2\beta}}  \int_0^\infty f(x) g(x)dx.
    \end{align*}
\end{proof}

By using now Lemma~\ref{Lem5}, the arguments developed in Section~\ref{subsec:2.2}
allow us to show that \eqref{Lp21} holds, for every $f \in L^p((0,\infty),\B)$.

Thus the proof of Theorem~\ref{boundedness} is completed.

%\newpage
%%%%%%%%%%%%%%%%%%%%%%%%%%%%%%%%%%%%%%%%%%%%%%%%%%%%%%%%%%%%%%%%%%%%%%%%%%%%%%%%%%%%%%%%%%%%%%%%%%%%%%%%%%%%%%%%%%%%
\section{Proof of Theorem~\ref{boundedness2}}\label{sec:Proof3}
%%%%%%%%%%%%%%%%%%%%%%%%%%%%%%%%%%%%%%%%%%%%%%%%%%%%%%%%%%%%%%%%%%%%%%%%%%%%%%%%%%%%%%%%%%%%%%%%%%%%%%%%%%%%%%%%%%%%

The Riesz transform $R_\lambda$ associated with the Bessel operator $\Delta_\lambda$ is the principal value integral operator
defined, for every $f \in L^p(0,\infty)$, by
$$R_\lambda(f)(x)
    = \lim_{\varepsilon \to 0^+} \int_{0, \ |x-y|>\varepsilon}^\infty R_\lambda(x,y) f(y)dy, \quad \text{a.e.  } x \in (0,\infty),$$
where
$$R_\lambda(x,y)
    = \int_0^\infty D_\lambda P_t^\lambda(x,y)dt, \quad x,y \in (0,\infty), \ x \neq y,$$
and $D_\lambda=x^\lambda \frac{d}{dx} x^{-\lambda}$. Main properties of Riesz transform $R_\lambda$ can be encountered in \cite{BBFMT}.
We denote by $R_\lambda^*$ the "adjoint" operator of $R_\lambda$ defined, for every $f \in L^p(0,\infty)$, by
$$R_\lambda^*(f)(x)
    = \lim_{\varepsilon \to 0^+} \int_{0, \ |x-y|>\varepsilon}^\infty R_\lambda(y,x) f(y)dy, \quad \text{a.e.  } x \in (0,\infty).$$
Riesz transforms $R_\lambda$ and $R_\lambda^*$ are bounded from $L^p(0,\infty)$ into itself. Moreover, since $\B$ is a UMD Banach space,
by defining $R_\lambda$ and $R_\lambda^*$ on $L^p(0,\infty)\otimes \B$ in the natural way, $R_\lambda$ and $R_\lambda^*$ can be extended to
$L^p((0,\infty),\B)$ as bounded operators on  $L^p((0,\infty),\B)$ into itself (\cite[Theorem 2.1]{BFMT}).

We define, for every $f \in L^p(0,\infty)$, the function $\QQ_t^\lambda(f)$ by
$$\QQ_t^\lambda(f)(x)
    = \int_0^\infty \QQ_t^\lambda(x,y)f(y)dy, \quad t,x \in (0,\infty),$$
where
$$\QQ_t^\lambda(x,y)
    = \frac{2\lambda(xy)^\lambda}{\pi} \int_0^\pi \frac{(x-y\cos \theta)(\sin \theta)^{2\lambda-1}}{(x^2+y^2+t^2-2xy \cos \theta)^{\lambda+1}} d\theta, \quad t,x,y \in (0,\infty).$$
The following Cauchy-Riemann equations hold
$$D_\lambda P_t^\lambda(f)= \partial_t \QQ_t^\lambda(f), \qquad
  D_\lambda^* \QQ_t^\lambda(f) = \partial_t P_t^\lambda(f), \quad t>0.$$
These relations motivate that $\QQ_t^\lambda(f)$ is called $\Delta_\lambda$-conjugated to the Poisson integral $P_t^\lambda(f)$.

The adjoint $\Delta_\lambda$-conjugated $\Q_t^\lambda(f)$ of $f \in L^p(0,\infty)$ is defined by
$$\Q_t^\lambda(f)(x)
    = \int_0^\infty \QQ_t^\lambda(y,x)f(y)dy, \quad t,x \in (0,\infty).$$
We have that
$$D_\lambda^* P_t^{\lambda+1}(f)= \partial_t \Q_t^\lambda(f), \qquad
  D_\lambda \Q_t^\lambda(f) = \partial_t P_t^{\lambda+1}(f), \quad t>0.$$
By using Hankel transform (see \cite[(16.5)]{MS}) we can see that, for every $f \in \mathcal{S}_\lambda(0,\infty)$,
$$P_t^\lambda(R_\lambda^* f)
    = \Q_t^\lambda(f), \quad t>0.$$
Then, for every $f \in \mathcal{S}_\lambda(0,\infty)$,
\begin{equation}\label{Lp23}
    \partial_t P_t^\lambda(R_\lambda^* f)
        = D_\lambda^* P_t^{\lambda+1}(f), \quad t>0.
\end{equation}
Equality \eqref{Lp23} also holds for every $f \in \mathcal{S}_\lambda(0,\infty) \otimes \B$. Then,
\begin{equation}\label{Lp24}
    \mathcal{G}_{P,\B}^{\lambda}(f)
        = G_{P,\B}^{\lambda,1}(R_\lambda^* f), \quad f \in \mathcal{S}_\lambda(0,\infty) \otimes \B.
\end{equation}

Since $R_\lambda^*$ can be extended to $L^p((0,\infty),\B)$ as a bounded operator from $L^p((0,\infty),\B)$
into itself, Theorem~\ref{boundedness} implies that the operator $\mathcal{G}_{P,\B}^{\lambda}$
can be extended from $\mathcal{S}_\lambda(0,\infty) \otimes \B$ as a bounded operator from $L^p((0,\infty),\B)$ into
$L^p((0,\infty),\gamma(H,\B))$.
We denote this extension by $\widetilde{\mathcal{G}}_{P,\B}^{\lambda}$.

We define
$$\mathcal{G}_{P,\B}^{\lambda}(t,x,y)
    = t D_\lambda^* P_t^{\lambda+1}(x,y), \quad t,x,y \in (0,\infty).$$
We have that
\begin{align*}
    \mathcal{G}_{P,\B}^{\lambda}(t,x,y)
        = & - \frac{2(\lambda+1)}{\pi} t^2 y^{\lambda+1} x^{-\lambda}
                \partial_x \left( x^{2\lambda+1} \int_0^\pi \frac{(\sin \theta)^{2\lambda+1}}{\left[ (x-y)^2 + t^2 + 2xy(1-\cos \theta) \right]^{\lambda+2}} d\theta \right) \\
        = & - \frac{2(\lambda+1)(2\lambda+1)}{\pi} t^2 x^{\lambda} y^{\lambda+1}
                \int_0^\pi \frac{(\sin \theta)^{2\lambda+1}}{\left[ (x-y)^2 + t^2 + 2xy(1-\cos \theta) \right]^{\lambda+2}} d\theta \\
          & + \frac{4(\lambda+1)(\lambda+2)}{\pi} t^2 (xy)^{\lambda+1}
                \int_0^\pi \frac{[(x-y)+y(1-\cos \theta)](\sin \theta)^{2\lambda+1}}{\left[ (x-y)^2 + t^2 + 2xy(1-\cos \theta) \right]^{\lambda+3}} d\theta,
          \quad t,x,y \in (0,\infty).
\end{align*}
Then,
\begin{align*}
    \left| \mathcal{G}_{P,\B}^{\lambda}(t,x,y) \right|
        \leq &  C \sqrt{t} \Big\{ x^\lambda y^{\lambda+1} \left(\int_0^{\pi/2} + \int_{\pi/2}^\pi \right) \frac{(\sin \theta)^{2\lambda+1}}{\left[ (x-y)^2 + t^2 + 2xy(1-\cos \theta) \right]^{\lambda+5/4}} d\theta \\
             & + (xy)^{\lambda+1} \left(\int_0^{\pi/2} + \int_{\pi/2}^\pi \right) \frac{(\sin \theta)^{2\lambda+1}}{\left[ (x-y)^2 + t^2 + 2xy(1-\cos \theta) \right]^{\lambda+7/4}} d\theta \Big\} \\
           = & \mathcal{G}_{P,\B}^{\lambda,1,1}(t,x,y) + \mathcal{G}_{P,\B}^{\lambda,1,2}(t,x,y)
                + \mathcal{G}_{P,\B}^{\lambda,2,1}(t,x,y) + \mathcal{G}_{P,\B}^{\lambda,2,2}(t,x,y) ,\quad t,x,y \in (0,\infty).
\end{align*}

Let $\varepsilon >0$. Since, for every $x,y \in (0,\infty)$ and $\theta \in (0,\pi/2)$,
$$\left( \int_\varepsilon^\infty \frac{dt}{\left[ (x-y)^2 + t^2 + 2xy(1-\cos \theta) \right]^{2\lambda+5/2}} \right)^{1/2}
    \leq \frac{C}{(|x-y|+\varepsilon+\sqrt{xy}\theta)^{2\lambda+2}},$$
and
$$\left( \int_\varepsilon^\infty \frac{dt}{\left[ (x-y)^2 + t^2 + 2xy(1-\cos \theta) \right]^{2\lambda+7/2}} \right)^{1/2}
    \leq \frac{C}{(|x-y|+\varepsilon+\sqrt{xy}\theta)^{2\lambda+3}},$$
we obtain
\begin{align*}
    &\Big\| \mathcal{G}_{P,\B}^{\lambda,1,1}(\cdot,x,y) \Big\|_{L^2\left((\varepsilon,\infty),dt/t\right)}
        + \Big\| \mathcal{G}_{P,\B}^{\lambda,2,1}(\cdot,x,y) \Big\|_{L^2\left((\varepsilon,\infty),dt/t\right)} \\
    & \qquad \leq  C \left( x^\lambda y^{\lambda+1} \int_0^{\pi/2} \frac{\theta^{2\lambda+1}}{(|x-y|+\varepsilon+\sqrt{xy}\theta)^{2\lambda+2}} d\theta
     + (xy)^{\lambda+1} \int_0^{\pi/2} \frac{\theta^{2\lambda+1}}{(|x-y|+\varepsilon+\sqrt{xy}\theta)^{2\lambda+3}} d\theta \right) \\
    & \qquad \leq  C \left( \frac{y}{(|x-y|+\varepsilon)^2} + \frac{xy}{(|x-y|+\varepsilon)^3} \right)
        \leq  C \left\{\begin{array}{ll}
                            1/(x+\varepsilon), & 0<y<x/2, \\
                            y/\varepsilon^2+y^2/\varepsilon^3, & x/2<y<2x, \\
                            1/(y+\varepsilon), & y>2x>0.
                        \end{array} \right.
\end{align*}
Analogously,
\begin{align*}
    &\Big\| \mathcal{G}_{P,\B}^{\lambda,1,2}(\cdot,x,y) \Big\|_{L^2\left((\varepsilon,\infty),dt/t\right)}
        + \Big\| \mathcal{G}_{P,\B}^{\lambda,2,2}(\cdot,x,y) \Big\|_{L^2\left((\varepsilon,\infty),dt/t\right)} \\
    & \qquad \leq  C \left( x^\lambda y^{\lambda+1} \int_{\pi/2}^\pi \frac{(\sin \theta)^{2\lambda+1}}{(x+y+\varepsilon)^{2\lambda+2}} d\theta
     + (xy)^{\lambda+1} \int_{\pi/2}^\pi \frac{(\sin \theta)^{2\lambda+1}}{(x+y+\varepsilon)^{2\lambda+3}} d\theta \right) \\
    & \qquad \leq  \frac{C}{x+y+\varepsilon},\quad x,y \in (0,\infty).
\end{align*}
Hence, for every $x \in (0,\infty)$,
$\left\| \mathcal{G}_{P,\B}^{\lambda}(\cdot,x,y) \right\|_{L^2\left((\varepsilon,\infty),dt/t\right)} \in L^{p'}(0,\infty)$.

By proceeding now as in Section~\ref{subsec:2.1} we conclude that
$$\mathcal{G}_{P,\B}^{\lambda}(f)
    = \widetilde{\mathcal{G}}_{P,\B}^{\lambda}(f), \quad f \in L^p((0,\infty), \B).$$
and the proof of Theorem~\ref{boundedness2} is completed.

%\newpage
%%%%%%%%%%%%%%%%%%%%%%%%%%%%%%%%%%%%%%%%%%%%%%%%%%%%%%%%%%%%%%%%%%%%%%%%%%%%%%%%%%%%%%%%%%%%%%%%%%%%%%%%%%%%%%%%%%%%
\section{Proof of Theorem~\ref{Th1.4}}\label{sec:Proof4}
%%%%%%%%%%%%%%%%%%%%%%%%%%%%%%%%%%%%%%%%%%%%%%%%%%%%%%%%%%%%%%%%%%%%%%%%%%%%%%%%%%%%%%%%%%%%%%%%%%%%%%%%%%%%%%%%%%%%

%%%%%%%%%%%%%%%%%%%%%%%%%%%%%%%%%%%%%%%%%%%%%%%%%%%%%%%%%%%%%
\subsection{Proof of $(i) \Rightarrow (ii)$ and $(i) \Rightarrow (iii)$ }\label{subsec:4.1}
%%%%%%%%%%%%%%%%%%%%%%%%%%%%%%%%%%%%%%%%%%%%%%%%%%%%%%%%%%%%%

In Theorems~\ref{boundedness} and \ref{boundedness2} it was proved that if $\B$ is a UMD Banach space,
then \eqref{A2}, \eqref{A3} and \eqref{A1} are satisfied, for every $1<p<\infty$.

%%%%%%%%%%%%%%%%%%%%%%%%%%%%%%%%%%%%%%%%%%%%%%%%%%%%%%%%%%%%%
\subsection{Proof of $(ii) \Rightarrow (i)$}\label{subsec:4.2}
%%%%%%%%%%%%%%%%%%%%%%%%%%%%%%%%%%%%%%%%%%%%%%%%%%%%%%%%%%%%%

Let $1<p<\infty$. Suppose that \eqref{A2} and \eqref{A3} hold. Let $f \in \mathcal{S}_\lambda(0,\infty)\otimes \B$. Since $R_\lambda^*$
is bounded from $L^p(0,\infty)$ into itself (\cite[Theorem 4.2]{BBFMT}),
$R_\lambda^*f \in L^p(0,\infty)\otimes \B$. According to \eqref{Lp24} we obtain
\begin{align*}
    \|R_\lambda^* f\|_{L^p((0,\infty),\B)}
        \leq & C \|G_{P,\B}^{\lambda,1}(R_\lambda^* f)\|_{L^p((0,\infty),\gamma(H,\B))}
        =      C \|\mathcal{G}_{P,\B}^{\lambda}(f)\|_{L^p((0,\infty),\gamma(H,\B))} \\
        \leq & C \| f \|_{L^p((0,\infty),\B).}
\end{align*}
Since $\mathcal{S}_\lambda(0,\infty) \otimes \B$ is dense in $L^p((0,\infty),\B)$, $R_\lambda^*$ can be extended to
$L^p((0,\infty),\B)$ as a bounded operator from $L^p((0,\infty),\B)$ into itself. By using
\cite[Theorem 2.1]{BFMT} we deduce that $\B$ is UMD.

%%%%%%%%%%%%%%%%%%%%%%%%%%%%%%%%%%%%%%%%%%%%%%%%%%%%%%%%%%%%%
\subsection{Proof of $(iii) \Rightarrow (i)$}\label{subsec:4.3}
%%%%%%%%%%%%%%%%%%%%%%%%%%%%%%%%%%%%%%%%%%%%%%%%%%%%%%%%%%%%%

Assume now that \eqref{A1} holds. In order to show that $\B$ is UMD we prove previously a characterization of UMD Banach
spaces involving $L^p$-boundedness properties of the imaginary powers $\Delta_\lambda^{i\omega}$,
$\omega \in \mathbb{R} \setminus \{0\}$, of the Bessel operator $\Delta_\lambda$.

Let $\omega \in \mathbb{R} \setminus \{0\}$. The $i \omega$-power $\Delta_\lambda^{i\omega}$ of $\Delta_\lambda$
is the Hankel multiplier defined by
\begin{equation}\label{23.1}
    \Delta_\lambda^{i\omega} f
        = h_\lambda \left( y^{2i \omega} h_\lambda(f)\right), \quad f \in L^2(0,\infty).
\end{equation}
Since $h_\lambda$ is an isometry in $L^2(0,\infty)$, the operator $\Delta_\lambda^{i\omega}$ is bounded from
$L^2(0,\infty)$ into itself. Moreover
$$y^{2i \omega}
    = y^2 \int_0^\infty e^{-y^2 u} \frac{u^{-i \omega}}{\G(1-i \omega)} du, \quad y \in (0,\infty),$$
and hence $\Delta_\lambda^{i\omega}$ is a Hankel multiplier of Laplace transform type. This type of Hankel multipliers
were studied in \cite{BCC1} and \cite{BMR}. Proceeding as in \cite[Theorem 1.2]{BCC1}, for every
$f \in C_c^\infty(0,\infty)$, we have that
\begin{equation}\label{Lp25}
    \Delta_\lambda^{i\omega} f (x)
        = \lim_{\varepsilon \to 0^+} \left( \alpha(\varepsilon)f(x) - \int_{0, \ |x-y|>\varepsilon}^\infty K^\lambda_\omega(x,y)f(y)dy\right),
        \quad \text{a.e.  } x \in (0,\infty),
\end{equation}
where
$$K^\lambda_\omega(x,y)
    = \int_0^\infty \frac{t^{-i\omega}}{\G(1-i \omega)} \partial_t W_t^\lambda(x,y)dt, \quad x,y \in (0,\infty), \ x \neq y,$$
and $W_t^\lambda(x,y)$ is the Bessel heat kernel
$$W_t^\lambda(x,y)
    = \frac{1}{\sqrt{2t}} \left( \frac{xy}{2t}\right)^{1/2} I_{\lambda-1/2}\left( \frac{xy}{2t} \right) e^{-(x^2+y^2)/4t}, \quad t,x,y \in (0,\infty).$$
Here $\alpha$ denotes a bounded function on $(0,\infty)$ and $I_\nu$ is the modified Bessel function of the first kind and order $\nu$.
By \cite[Theorem 1.2]{BMR}, $\Delta_\lambda^{i\omega}f$ can be
extended to $L^p(0,\infty)$ as a bounded operator from  $L^p(0,\infty)$ into itself. Moreover, as in \cite[Theorem 1.4]{BCC1},
we can see that this extension, that we will continue denoting by $\Delta_\lambda^{i\omega}$, is given by the limit in \eqref{Lp25}
for every $f \in L^p(0,\infty)$. The operator $\Delta_\lambda^{i\omega}$, is defined on $L^p(0,\infty) \otimes \B$ in the usual way.

The following result is a Bessel version of \cite[Theorem p. 402]{Gue}.

\begin{Prop}\label{ImagBess}
    Let $X$ be a Banach space and $\lambda>0$. Then, $X$ is UMD if and only if, for some (equivalently, for every) $1<q<\infty$,
    the operator $\Delta^{i\omega}_\lambda$, $\omega \in \mathbb{R}\setminus \{0\}$, can be extended from
    $L^q(0,\infty) \otimes X$ to $L^q((0,\infty),X)$ as a bounded operator from $L^q((0,\infty),X)$ into itself.
\end{Prop}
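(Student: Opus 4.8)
The plan is to transfer the question to the classical imaginary power $\mathcal{T}_\omega := (-d^2/dx^2)^{i\omega}$ acting on $L^q(\R,X)$ and then invoke \cite[Theorem p.\ 402]{Gue}, which asserts that $X$ is UMD if and only if $\mathcal{T}_\omega$, $\omega \in \R\setminus\{0\}$, extends from $L^q(\R)\otimes X$ to $L^q(\R,X)$ as a bounded operator for some (equivalently, for every) $1<q<\infty$. The transference is effected through a comparison of singular kernels. Recall from \eqref{Lp25} that, for $f \in C_c^\infty(0,\infty)$, $\Delta_\lambda^{i\omega}f$ is given by a principal value integral with kernel $K^\lambda_\omega(x,y) = \int_0^\infty \frac{t^{-i\omega}}{\G(1-i\omega)}\partial_t W_t^\lambda(x,y)\,dt$ plus a term $\alpha(\varepsilon)f(x)$. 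On the line, $\mathcal{T}_\omega$ is the Fourier multiplier with symbol $|\xi|^{2i\omega}$ and admits the completely analogous representation, with kernel $k_\omega(x-y)$, where $k_\omega(u) = \int_0^\infty \frac{t^{-i\omega}}{\G(1-i\omega)}\partial_t \W_t(u)\,du = c_\omega|u|^{-1-2i\omega}$ ($\W_t$ the one-dimensional heat kernel), plus a term $\beta(\varepsilon)f(x)$.

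First I would prove the kernel comparison
$$\Big| K^\lambda_\omega(x,y) - c_\omega |x-y|^{-1-2i\omega}\Big| \le \frac{C}{\max\{x,y\}}, \quad x,y \in (0,\infty),\ x\neq y.$$
This is obtained by inserting the defining formula for $W_t^\lambda(x,y)$ and using the behaviour of the modified Bessel function $I_{\lambda-1/2}$ in the two regimes $xy/t \to 0^+$ and $xy/t \to \infty$ — the same analysis performed for Laplace transform type Hankel multipliers in \cite{BCC1} and \cite{BMR}, and in the spirit of the local/global splittings of Sections~\ref{sec:Proof1} and \ref{sec:Proof2}. Denoting by $\mathcal{T}_\omega^+$ the integral operator on $(0,\infty)$ with kernel $c_\omega|x-y|^{-1-2i\omega}$ (in the principal value sense, with the same normalizing term $\beta(\varepsilon)$), the difference $\Delta_\lambda^{i\omega} - \mathcal{T}_\omega^+$ then has the form $f \mapsto c_0 f + \widetilde T f$, where $c_0 = \lim_{\varepsilon\to0^+}(\alpha(\varepsilon)-\beta(\varepsilon))$ (this limit exists by the $L^2$-boundedness of $\Delta_\lambda^{i\omega}$ and $\mathcal{T}_\omega$ together with the estimate above) and $\widetilde T$ has an absolutely convergent kernel dominated by $C/\max\{x,y\}$. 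Since
$$\int_0^\infty \frac{\|f(y)\|_X}{\max\{x,y\}}\,dy \le H_0(\|f\|_X)(x) + H_\infty(\|f\|_X)(x),$$
and $H_0, H_\infty$ are bounded on $L^q(0,\infty)$ (\cite[p.\ 244, (9.9.1) and (9.9.2)]{HLP}), the operator $\widetilde T$, being dominated by a positive operator, extends to a bounded operator on $L^q((0,\infty),X)$ for \emph{every} Banach space $X$ and every $1<q<\infty$. Hence $\Delta_\lambda^{i\omega}$ is bounded on $L^q((0,\infty),X)$ if and only if $\mathcal{T}_\omega^+$ is.

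It remains to relate $\mathcal{T}_\omega^+$ to $\mathcal{T}_\omega$. If $\mathcal{T}_\omega$ is bounded on $L^q(\R,X)$, then for $f \in L^q((0,\infty),X)$, extending $f$ by zero to $\tilde f$ on $\R$ one has $\mathcal{T}_\omega^+ f = (\mathcal{T}_\omega \tilde f)|_{(0,\infty)}$ (the truncated integrals and the $\beta(\varepsilon)$ terms literally coincide for $x>0$ when $\supp \tilde f \subset (0,\infty)$), so $\mathcal{T}_\omega^+$ is bounded on $L^q((0,\infty),X)$. Conversely, assume $\mathcal{T}_\omega^+$ is bounded on $L^q((0,\infty),X)$ and let $g \in C_c^\infty(\R)\otimes X$ with $\supp g \subset [-M,M]$. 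For $N > 2M$ the translate $g_N := g(\cdot - N)$ is supported in $(0,\infty)$, and by translation invariance of $\mathcal{T}_\omega$ we get $\mathcal{T}_\omega^+ g_N(x) = (\mathcal{T}_\omega g)(x - N)$ for $x \in (0,\infty)$; therefore
$$\|\mathcal{T}_\omega g\|_{L^q((-N,\infty),X)} = \|\mathcal{T}_\omega^+ g_N\|_{L^q((0,\infty),X)} \le C\|g_N\|_{L^q((0,\infty),X)} = C\|g\|_{L^q(\R,X)},$$
and letting $N \to \infty$ gives $\|\mathcal{T}_\omega g\|_{L^q(\R,X)} \le C\|g\|_{L^q(\R,X)}$. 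By density, $\mathcal{T}_\omega$ is bounded on $L^q(\R,X)$. Combining the two paragraphs, $\Delta_\lambda^{i\omega}$ extends boundedly to $L^q((0,\infty),X)$ if and only if $\mathcal{T}_\omega$ extends boundedly to $L^q(\R,X)$, and by \cite[Theorem p.\ 402]{Gue} this happens for some (equivalently, for every) $1<q<\infty$ precisely when $X$ is UMD.

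The soft ingredients — the Hardy-type boundedness of the error operator $\widetilde T$ and the two transference steps — are routine. The technical heart, and the main obstacle, is the kernel comparison estimate above: it requires uniform control of $\int_0^\infty \frac{t^{-i\omega}}{\G(1-i\omega)}\partial_t W_t^\lambda(x,y)\,dt$ through both asymptotic regimes of $I_{\lambda-1/2}$, as well as the bookkeeping of the constant $c_0$ coming from the mismatch of the principal-value normalizations. In practice I would quote the kernel estimates of \cite{BCC1} and \cite{BMR} (or reproduce their local/global decomposition) rather than redo this computation.
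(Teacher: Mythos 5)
Your argument is correct and follows the same overall strategy as the paper: transfer $\Delta_\lambda^{i\omega}$ to the classical imaginary power through a kernel comparison whose error is dominated by the Hardy operators $H_0$, $H_\infty$, and then invoke \cite[Theorem p. 402]{Gue}. The differences are in two places. First, the paper compares $K_\omega^\lambda$ with the heat-kernel representation $K_\omega(x,y)=-\int_0^\infty \frac{t^{-i\omega}}{\Gamma(1-i\omega)}\partial_t\W_t(x-y)\,dt$ taken from \cite{BCFR}, whose principal-value normalization $\alpha(\varepsilon)$ is \emph{literally the same} function as in \eqref{Lp25}; this makes your constant $c_0=\lim(\alpha(\varepsilon)-\beta(\varepsilon))$ identically zero and removes the extra bookkeeping you rightly flag (your way of handling $c_0$ is workable, just less clean). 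The diagonal estimate you need, $|K_\omega(x,y)-K_\omega^\lambda(x,y)|\leq C(xy)^{-1/2}$ for $x/2<y<2x$, is exactly \eqref{Lp32}, which the paper proves from the small- and large-argument asymptotics of $I_{\lambda-1/2}$ (\eqref{Lp28}--\eqref{Lp29}) rather than quoting \cite{BCC1}, \cite{BMR} -- the computation you sketch is the one carried out there, so outsourcing it is legitimate. Second, and this is the genuinely different step, in the direction ``$\Delta_\lambda^{i\omega}$ bounded on the half-line $\Rightarrow$ classical power bounded on $\R$'' the paper decomposes $f\in L^q(\R)\otimes X$ into $f_+$, $f_-$, writes the identity \eqref{Lp34} with the operators $T_{\omega,1}$, $T_{\omega,2}$, and needs the reflected-kernel bound $|K_\omega(x,-y)|\leq C/(x+y)$ of \eqref{Lp26} to absorb the cross terms into Hardy operators; you instead translate compactly supported test functions far into $(0,\infty)$ and use translation invariance of the classical multiplier, which is softer, avoids \eqref{Lp26} and the four-term splitting entirely, and is a perfectly valid (arguably slicker) alternative, at the price of working only with the translation-invariant model operator rather than directly with arbitrary $f$ on $\R$. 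No gaps.
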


\begin{proof}
    According to \cite[Theorem p. 402]{Gue}, $X$ is UMD if, and only if, for every $\omega \in \mathbb{R}\setminus\{0\}$  and
    for some (equivalently, for every) $1<q<\infty$, the $i\omega$-power $\left( - \frac{d^2}{dx^2}\right)^{i \omega}$
    of the operator $- \frac{d^2}{dx^2}$ can be extended from $L^q(\mathbb{R}) \otimes X$ to $L^q(\mathbb{R},X)$ as a
    bounded operator from $L^q(\mathbb{R},X)$ into itself.

    We recall that (see \cite[Appendix]{BCFR} for a proof) for every $f \in L^q(\mathbb{R})$, $1<q<\infty$, and
    $\omega \in \mathbb{R}\setminus \{0\}$,
    $$\left( - \frac{d^2}{dx^2}\right)^{i \omega} f(x)
        = \lim_{\varepsilon \to 0^+} \left( \alpha(\varepsilon)f(x) - \int_{\ |x-y|>\varepsilon} K_\omega(x,y)f(y)dy\right),
        \quad \text{a.e.  } x \in \mathbb{R},$$
    where
    $$K_\omega (x,y)
        = -\int_0^\infty \frac{t^{-i\omega}}{\G(1-i \omega)} \partial_t \W_t(x-y)dt, \quad x,y \in \mathbb{R}, \ x \neq y,$$
    and $\W_t(z)$ denotes the classical heat kernel \eqref{2.1}.
    Here $\alpha$ represents the same function that appears in \eqref{Lp25}. The operator
    $\left( - \frac{d^2}{dx^2}\right)^{i \omega}$, $\omega \in \mathbb{R}\setminus\{0\}$, is defined on $L^q(\mathbb{R}) \otimes X$,
    $1<q<\infty$, in the natural way.

    Let $\omega \in \R \setminus \{0\}$. We are going to obtain some estimates for the kernels $K^\lambda_\omega(x,y)$
    and $K_\omega(x,y)$, $x,y \in (0,\infty )$, that will allow us to get our characterization of the UMD spaces by using
    imaginary powers of Bessel operators.

    Note firstly that
    \begin{equation}\label{Lp26}
        |K_\omega(x,-y)|
            \leq C \int_0^\infty \left| \partial_t \W_t(x+y) \right| dt
            \leq C \int_0^\infty \frac{e^{-c(x+y)^2/t}}{t^{3/2}} dt
            \leq \frac{C}{x+y}, \quad x,y \in (0,\infty).
    \end{equation}
    In a similar way we obtain, for every $x \in (0,\infty)$,
    \begin{equation}\label{Lp27}
        |K_\omega(x,y)|
            \leq C \left\{ \begin{array}{ll}
                                1/x, & 0<y<x/2,\\
                                1/y, & y>2x.
                           \end{array} \right.
    \end{equation}
    According to \cite[pp. 108 and 123]{Leb} if $\nu>-1$, we have that
    \begin{equation}\label{Lp28}
        I_\nu(z) \sim \frac{z^\nu}{2^\nu \G(\nu+1)}, \quad \text{as } z \to 0^+,
    \end{equation}
    and
    \begin{equation}\label{Lp29}
        \sqrt{z} I_\nu(z)
            = \frac{e^z}{\sqrt{2\pi}} \left( \sum_{r=0}^n \frac{(-1)^r [\nu,r]}{(2z)^r} + \mathcal{O}\left(\frac{1}{z^{n+1}}\right)\right),
    \end{equation}
    where $[\nu,0]=1$ and
    $$[\nu,r]=\frac{(4\nu^2-1)(4\nu^2-3^2) \cdot \dots \cdot (4\nu^2-(2r-1)^2)}{2^{2r}\G(r+1)}, \quad r=1,2, \dots$$
    Since (see \cite[p. 110]{Leb}),
    \begin{equation}\label{24.1}
        \frac{d}{dz}\left( z^{-\nu} I_\nu(z)\right)
            = z^{-\nu} I_{\nu+1}(z), \quad z \in (0,\infty), \ \nu>-1,
    \end{equation}
    it follows that, for every $t,x,y \in (0,\infty)$,
    \begin{align*}
        & \partial_t \left[ W_t^\lambda(x,y) - \W_t(x-y) \right]
                = \partial_t \left[ \W_t(x-y) \left\{ \sqrt{2\pi} \left( \frac{xy}{2t} \right)^{\nu+1/2} \left( \frac{xy}{2t} \right)^{-\nu} I_\nu\left( \frac{xy}{2t}\right)e^{-xy/2t}-1\right\} \right] \\
        & \qquad = \partial_t \W_t(x-y) \left\{ \sqrt{2\pi} \left( \frac{xy}{2t} \right)^{1/2} I_\nu\left( \frac{xy}{2t}\right)e^{-xy/2t}-1\right\} \\
        & \qquad \qquad - \sqrt{2\pi} \W_t(x-y) \Big\{ (\nu+1/2) \left( \frac{xy}{2t} \right)^{\nu-1/2} \frac{xy}{2t^2} \left( \frac{xy}{2t} \right)^{-\nu} I_\nu\left( \frac{xy}{2t}\right) \\
        & \qquad \qquad + \left( \frac{xy}{2t} \right)^{\nu+1/2} \frac{xy}{2t^2} \left( \frac{xy}{2t} \right)^{-\nu} I_{\nu+1}\left( \frac{xy}{2t}\right)
                        - \frac{xy}{2t^2} \left( \frac{xy}{2t} \right)^{1/2} I_{\nu}\left( \frac{xy}{2t}\right) \Big\} e^{-xy/2t} \\
        & \qquad = \partial_t \W_t(x-y) \left\{ \sqrt{2\pi} \left( \frac{xy}{2t} \right)^{1/2} I_\nu\left( \frac{xy}{2t}\right)e^{-xy/2t}-1\right\} \\
        & \qquad \qquad - \sqrt{2\pi} \W_t(x-y) \frac{xy}{2t^2} e^{-xy/2t} \\
        & \qquad \qquad \times \left\{  (\nu+1/2) \frac{2t}{xy} \left( \frac{xy}{2t} \right)^{1/2} I_{\nu}\left( \frac{xy}{2t}\right)
                                       + \left( \frac{xy}{2t} \right)^{1/2} I_{\nu+1}\left( \frac{xy}{2t}\right)
                                       - \left( \frac{xy}{2t} \right)^{1/2} I_{\nu}\left( \frac{xy}{2t}\right) \right\},
    \end{align*}
    being $\nu=\lambda-1/2$.

    From \eqref{Lp28}, we deduce
    \begin{equation}\label{Lp30}
        \left| \partial_t \left[ W_t^\lambda(x,y)-\W_t(x-y) \right] \right|
            \leq C \frac{e^{-c(x-y)^2/t}}{t^{3/2}}, \quad t,x,y \in (0,\infty) \text{ and } xy \leq 2t,
    \end{equation}
    and by using \eqref{Lp29},
    \begin{equation}\label{Lp31}
        \left| \partial_t \left[ W_t^\lambda(x,y)-\W_t(x-y) \right] \right|
            \leq C \frac{e^{-c(x-y)^2/t}}{t^{1/2}xy}, \quad t,x,y \in (0,\infty) \text{ and } xy \geq 2t.
    \end{equation}
    Combining \eqref{Lp30} and \eqref{Lp31} we obtain
    \begin{align}\label{Lp32}
         \left| K_\omega(x,y)-K_\omega^\lambda(x,y) \right|
            \leq &C \int_0^\infty \left| \partial_t \left[ W_t^\lambda(x,y) - \W_t(x-y) \right] \right| dt \nonumber \\
            \leq& C \left( \int_0^{xy/2} \frac{e^{-c(x-y)^2/t}}{t^{1/2}xy} dt + \int_{xy/2}^\infty \frac{e^{-c(x^2+y^2)/t}}{t^{3/2}} dt \right) \nonumber\\
            \leq& \frac{C}{(xy)^{1/2}} \leq \frac{C}{x}, \quad x/2 < y < 2x, \quad x \in (0,\infty).
    \end{align}
    Moreover, \eqref{Lp30} and \eqref{Lp31} imply that, for each $x \in (0,\infty)$,
    \begin{align}\label{Lp33}
        \left| K_\omega^\lambda(x,y) \right|
            \leq C \int_0^\infty \frac{e^{-c(x-y)^2/t}}{t^{3/2}}dt
            \leq \frac{C}{|x-y|}
            \leq C \left\{
                        \begin{array}{ll}
                            1/x, & 0<y<x/2, \\
                            1/y, & y>2x.
                        \end{array}\right.
    \end{align}
    Suppose that $X$ is UMD and $1<q<\infty$. Let $f \in L^q(0,\infty) \otimes X$. We define the function $\tilde{f}$ by
    $$\tilde{f}(x)
        = \left\{
                \begin{array}{ll}
                    0, & x \leq 0, \\
                    f(x), & x>0.
                \end{array}\right.$$
    Thus, $\tilde{f} \in L^q(\R)\otimes X$.
    We have that
    $$\left( - \frac{d^2}{dx^2} \right)^{i \omega} \tilde{f}(x)
        = \lim_{\varepsilon \to 0^+} \left( f(x)\alpha(\varepsilon) - \int_{0, |x-y|>\varepsilon}^\infty K_\omega(x,y)f(y)dy \right), \quad \text{a.e.  } x \in (0,\infty),$$
    and
    $$\Delta_\lambda^{i \omega} f(x)
        = \lim_{\varepsilon \to 0^+} \left( f(x)\alpha(\varepsilon) - \int_{0, |x-y|>\varepsilon}^\infty K_\omega^\lambda(x,y)f(y)dy \right), \quad \text{a.e.  } x \in (0,\infty).$$
    Then, \eqref{Lp27}, \eqref{Lp32}, \eqref{Lp33} lead to
    \begin{align*}
        & \Big\| \left( - \frac{d^2}{dx^2} \right)^{i \omega} \tilde{f}(x) - \Delta_\lambda^{i \omega} f(x) \Big\|_X
            \leq {\underset{\varepsilon \to 0^+}{\overline{\lim}}} \int_{0, |x-y|>\varepsilon}^\infty \left| K_\omega(x,y) - K_\omega^\lambda(x,y) \right| \|f(y)\|_X dy \\
        & \qquad \leq C \left[ H_0(\|f\|_X)(x) + H_\infty(\|f\|_X)(x)  \right], \quad \text{a.e.  } x \in (0,\infty).
    \end{align*}
    Hence, according to \cite[p. 244, (9.9.1) and (9.9.2)]{HLP}, there exists $C>0$ such that
    $$\Big\| \left( - \frac{d^2}{dx^2} \right)^{i \omega} \tilde{f} - \Delta_\lambda^{i \omega} f \Big\|_{L^q((0,\infty),X)}
        \leq C \|f\|_{L^q((0,\infty),X)}, \quad f \in L^q(0,\infty)\otimes X.$$
    Moreover, by \cite[Theorem p. 402]{Gue}, we also have
    $$\Big\| \left( - \frac{d^2}{dx^2} \right)^{i \omega} \tilde{f} \Big\|_{L^q((0,\infty),X)}
        \leq C \|f\|_{L^q((0,\infty),X)}, \quad f \in L^q(0,\infty)\otimes X.$$
    We conclude that
    $$\Big\| \Delta_\lambda^{i \omega} f \Big\|_{L^q((0,\infty),X)}
        \leq C \|f\|_{L^q((0,\infty),X)}, \quad f \in L^q(0,\infty)\otimes X.$$

    Suppose now that $\Delta_\lambda^{i\omega}$ can be extended from $L^q(0,\infty)\otimes X$ to $L^q((0,\infty),X)$ as a bounded
    operator from $L^q((0,\infty),X)$ into itself. According to \cite[Theorem p. 402]{Gue} in order to see that $X$ is UMD it is sufficient
    to see that, for a certain $C>0$,
    $$\Big\| \left( - \frac{d^2}{dx^2} \right)^{i \omega} f \Big\|_{L^q(\R,X)}
        \leq C \|f\|_{L^q(\R,X)}, \quad f \in L^q(\R)\otimes X.$$
    Let $f \in L^p(\mathbb{R})\otimes X$. By defining
    $$f_+(x)=f(x), \quad \text{and} \quad f_-(x)=f(-x), \quad x \in (0,\infty),$$
    we have that
    \begin{align*}
        & \left( - \frac{d^2}{dx^2} \right)^{i \omega} f(x)
            = \lim_{\varepsilon \to 0^+} \left( f_+(x)\alpha(\varepsilon) - \int_{0, |x-y|>\varepsilon}^\infty K_\omega(x,y)f_+(y)dy
                                               - \int_{-\infty}^0 K_\omega(x,y)f(y)dy  \right) \\
        \qquad & = \lim_{\varepsilon \to 0^+} \left( f_+(x)\alpha(\varepsilon) - \int_{0, |x-y|>\varepsilon}^\infty K_\omega(x,y)f_+(y)dy\right)
                                               - \int_0^\infty K_\omega(x,-y)f_-(y)dy  \, \quad \text{a.e.  } x \in (0,\infty),
    \end{align*}
    and
    \begin{align*}
        & \left( - \frac{d^2}{dx^2} \right)^{i \omega} f(x)
            = \lim_{\varepsilon \to 0^+} \left( f(x)\alpha(\varepsilon) - \int_{-\infty, |x-y|>\varepsilon}^0 K_\omega(x,y)f(y)dy
                                               - \int_0^\infty K_\omega(x,y)f(y)dy  \right) \\
        \qquad & = \lim_{\varepsilon \to 0^+} \left( f_-(-x)\alpha(\varepsilon) - \int_{0, |x+y|>\varepsilon}^\infty K_\omega(x,-y)f(-y)dy
                                               - \int_0^\infty K_\omega(x,y)f(y)dy  \right)\\
        \qquad & = \lim_{\varepsilon \to 0^+} \left( f_-(-x)\alpha(\varepsilon) - \int_{0, |x+y|>\varepsilon}^\infty K_\omega(x,-y)f_-(y)dy\right)
                                               - \int_0^\infty K_\omega(x,y)f_+(y)dy  , \quad \text{a.e.  } x \in (-\infty,0).
    \end{align*}
    We consider the operators
    $$T_{\omega,1}(g)(x)
        = \lim_{\varepsilon \to 0^+} \left( g(x)\alpha(\varepsilon) - \int_{0, |x-y|>\varepsilon}^\infty K_\omega(x,y)g(y)dy\right), \quad x\in (0,\infty),$$
    and
    $$T_{\omega,2}(g)(x)
        = \int_{0}^\infty K_\omega(x,-y)g(y)dy, \quad x\in (0,\infty),$$
    for every $g \in L^q(0,\infty) \otimes X$.

    We can write
    \begin{align}\label{Lp34}
        \Big\| \left( - \frac{d^2}{dx^2} \right)^{i \omega} f\Big\|_{L^q(\R,X)}^q
            = & \Big\| T_{\omega,1}(f_+) \Big\|_{L^q((0,\infty),X)}^q
              + \Big\| T_{\omega,2}(f_-) \Big\|_{L^q((0,\infty),X)}^q \nonumber \\
            & + \Big\| T_{\omega,1}(f_-) \Big\|_{L^q((0,\infty),X)}^q
              + \Big\| T_{\omega,2}(f_+) \Big\|_{L^q((0,\infty),X)}^q
    \end{align}
    According to \eqref{Lp26} we get, for every $g \in L^q(0,\infty) \otimes X$,
    $$\| T_{\omega,2}(g)(x) \|_X
        \leq C \int_0^\infty \frac{\|g(y)\|_X}{x+y}dy
        \leq C \left[ H_0(\|g\|_X)(x) + H_\infty(\|g\|_X)(x)\right], \quad x\in (0,\infty). $$
    Also, by combining \eqref{Lp27}, \eqref{Lp32} and \eqref{Lp33} we obtain, for each $g \in L^q(0,\infty) \otimes X$,
    $$\| T_{\omega,1}(g)(x) - \Delta_\lambda^{i\omega}(g)(x) \|_X
        \leq C \left[ H_0(\|g\|_X)(x) + H_\infty(\|g\|_X)(x)\right], \quad x\in (0,\infty). $$
    Then, by \cite[p. 244, (9.9.1) and (9.9.2)]{HLP} it follows that , for every $g \in L^q(0,\infty) \otimes X$,
    \begin{equation}\label{Lp35}
        \| T_{\omega,2}(g)\|_{L^q((0,\infty),X)} + \| T_{\omega,1}(g) - \Delta_\lambda^{i\omega}(g)\|_{L^q((0,\infty),X)}
            \leq C \|g\|_{L^q((0,\infty),X)}.
    \end{equation}
    Since $\Delta_\lambda^{i\omega}$ can be extended from $L^q(0,\infty) \otimes X$ to $L^q((0,\infty),X)$ as a bounded
    operator from $L^q((0,\infty),X)$ into itself, \eqref{Lp34} and \eqref{Lp35} implies that
    $$\Big\| \left( - \frac{d^2}{dx^2} \right)^{i \omega} f\Big\|_{L^q(\R,X)}
        \leq C \left( \left\| f_+\right\|_{L^q((0,\infty),X)} + \left\| f_-\right\|_{L^q((0,\infty),X)} \right)
        \leq C \left\| f \right\|_{L^q(\R,X)}, $$
    for every $f \in L^q(\R) \otimes X$.
\end{proof}

Let $\beta>0$ and $f \in \mathcal{S}_\lambda(0,\infty)$. According to Theorem~\ref{boundedness},
there exists a set $\Omega \subset (0,\infty)$, such that $|(0,\infty) \setminus \Omega|=0$ and for every $x \in \Omega$,
the functions $G_{P,\C}^{\lambda,\beta}(\Delta_\lambda^{i\omega}f)(\cdot,x)$ and $G_{P,\C}^{\lambda,\beta+1}(f)(\cdot,x)$
are in $H$. Let $x \in \Omega$. We denote by $A_1$ and $A_2$ the linear bounded operators from $H$ into $\C$ defined by
$$A_1(h)
    = \int_0^\infty G_{P,\C}^{\lambda,\beta}(\Delta_\lambda^{i\omega}f)(t,x) h(t) \frac{dt}{t}, \quad h \in H, $$
and
$$A_2(h)
    = \int_0^\infty G_{P,\C}^{\lambda,\beta+1}(f)(t,x) h(t) \frac{dt}{t}, \quad h \in H.$$
We also define, for every $h \in H$,
$$T_{\omega,\beta}(h)(t)
    = \frac{1}{t^\beta} \int_0^t (t-s)^{\beta-1} h(t-s) \phi_\omega(s)ds, \quad t \in (0,\infty),$$
where $\phi_\omega(s)=s^{-2 i \omega}/\G(1-2 i \omega)$, $s \in (0,\infty)$. Thus, $T_{\omega,\beta}$ is a linear bounded
operator from $H$ into itself. Indeed, Jensen's inequality leads to
\begin{align*}
    \|T_{\omega,\beta}(h)\|_H
        \leq & \left( \int_0^\infty \frac{1}{t^{2\beta+1}} \left( \int_0^t \left| h(t-s)(t-s)^{\beta-1} \phi_\omega(s) \right| ds \right)^2 dt \right)^{1/2} \\
        \leq & C \left( \int_0^\infty \frac{1}{t} \left( \int_0^t \left| h(u)  \right|  \frac{u^{\beta-1}du}{t^\beta}\right)^2 dt \right)^{1/2}
        \leq  C \left( \int_0^\infty \frac{1}{t^{\beta+1}} \int_0^t \left| h(u)  \right|^2 u^{\beta-1} du  dt \right)^{1/2} \\
        \leq & C \|h\|_H, \quad h \in H.
\end{align*}
We now show that
\begin{equation}\label{Lp36}
    A_1(h)
        = - A_2(T_{\omega,\beta} h), \quad h \in H.
\end{equation}
Indeed, let $h \in H$. Since $T_{\omega,\beta} h \in H$, we can write
\begin{align*}
    A_2(T_{\omega,\beta}h)
        & = \int_0^\infty G_{P,\C}^{\lambda,\beta+1}(f)(t,x) (T_{\omega,\beta}h)(t) \frac{dt}{t}
         = \int_0^\infty t^{\beta+1} \partial_t^{\beta+1} P_t^\lambda (f)(x) (T_{\omega,\beta}h)(t) \frac{dt}{t}.
\end{align*}
%We choose $m \in \N$ such that $m-1 \leq \beta + 1 < m$. By using Lemma \ref{Lemma4} we get
%\begin{align*}
%    \partial_t^m P_t^\lambda(f)(x)
%        = \partial_t^m h_\lambda \left[ e^{-yt} h_\lambda(f)(y) \right] (x)
%        = h_\lambda \left[ (-y)^m e^{-yt} h_\lambda(f)(y) \right](x), \quad t,x \in (0,\infty).
%\end{align*}
%Derivation under the integral sign is justified because $f \in \mathcal{S}_\lambda(0,\infty)$ and then $h_\lambda(f) \in \mathcal{S}_\lambda(0,\infty)$.
%We have that
%\begin{align*}
%    & \int_0^\infty \partial_t^m P_{t+u}^\lambda (f)(x) u^{m-\beta-2} du
%        = \int_0^\infty h_\lambda \left[ (-y)^m e^{-y(t+u)}h_\lambda(f)(y) \right](x) u^{m-\beta-2}du \\
%    & \qquad = h_\lambda \left[ \left(\int_0^\infty e^{-yu} u^{m-\beta-2} du\right) (-y)^m e^{-yt} h_\lambda(f)(y) \right](x) \\
%    & \qquad = \G(m-\beta-1) h_\lambda \left[ (-1)^m y^{\beta+1} e^{-yt} h_\lambda(f)(y) \right](x), \quad t \in (0,\infty).
%\end{align*}
%Note that, since the function $\sqrt{z} J_{\lambda-1/2}(z)$ is bounded on $(0,\infty)$ and $h_\lambda(f) \in \mathcal{S}_\lambda(0,\infty)$,
%we obtain
%\begin{align*}
%    & \int_0^\infty \int_0^\infty \left| \sqrt{xy} J_{\lambda-1/2}(xy)\right| e^{-y(t+u)} y^m u^{m-\beta-2} |h_\lambda(f)(y)| dy du \\
%    & \qquad \leq C \int_0^\infty e^{-yt} y^m |h_\lambda(f)(y)| \int_0^\infty e^{-yu}  u^{m-\beta-2}   dudy \\
%    & \qquad \leq C \int_0^\infty e^{-yt} y^{\beta+1} |h_\lambda(f)(y)|dy < \infty, \quad t \in (0,\infty),
%\end{align*}
%and the interchange in the order of integration is justified. Hence,
By Lemma~\ref{Lem4} we have that
\begin{equation}\label{28.2}
    \partial_t^{\beta+1} P_t^\lambda(f)(x)
        = e^{i \pi (\beta+1)} h_\lambda \left( y^{\beta+1} e^{-yt} h_\lambda(f)(y) \right)(x), \quad t,x \in (0,\infty).
\end{equation}
Interchanging the order of integration twice we get
\begin{align*}
    A_2(T_{\omega,\beta}h)
        = & e^{i \pi (\beta+1)} \int_0^\infty t^\beta h_\lambda \left( y^{\beta+1} e^{-yt} h_\lambda(f)(y) \right)(x) (T_{\omega,\beta}h)(t)dt \\
        = & e^{i \pi (\beta+1)}  h_\lambda \left( h_\lambda(f)(y) y^{\beta+1}\int_0^\infty e^{-yt} t^\beta (T_{\omega,\beta}h)(t) dt   \right)(x)\\
        = & e^{i \pi (\beta+1)}  h_\lambda \left( h_\lambda(f)(y)y^{\beta+1}  \int_0^\infty e^{-yt} \int_0^t (t-s)^{\beta-1} h(t-s) \phi_\omega(s) ds dt  \right)(x)\\
        = & e^{i \pi (\beta+1)}  h_\lambda \left( y^{\beta+2 i \omega} h_\lambda(f)(y) \int_0^\infty e^{-yu} u^{\beta-1} h(u) du   \right)(x)\\
        = & e^{i \pi (\beta+1)} \int_0^\infty h_\lambda \left[ y^{\beta+2 i \omega} h_\lambda(f)(y)  e^{-yu} u^{\beta}\right](x) h(u)     \frac{du}{u}\\
        = & - \int_0^\infty h_\lambda \left[ u^{\beta} e^{i\pi \beta}y^{\beta+2 i \omega} e^{-yu} h_\lambda(f)(y)   \right](x) h(u)     \frac{du}{u}\\
        = & - \int_0^\infty u^\beta \partial_u^\beta P_u^\lambda \left[ h_\lambda( y^{2i\omega}h_\lambda(f)(y))   \right](x) h(u)     \frac{du}{u}\\
        = & -A_1(h), \quad h \in H,
\end{align*}
and \eqref{Lp36} is established. Note that the interchanges in the order of integration are justified because the
function $\sqrt{z} J_{\lambda-1/2}(z)$ is bounded on $(0,\infty)$ and $h_\lambda(f) \in \mathcal{S}_\lambda(0,\infty)$.

From \eqref{Lp36} we deduce that, for every $f \in \mathcal{S}_\lambda(0,\infty) \otimes \B$,
\begin{equation*}\label{Lp36.5}
    G_{P,\B}^{\lambda,\beta}(\Delta_\lambda^{i\omega}f)(\cdot,x)
        = - G_{P,\B}^{\lambda,\beta+1}(f)(\cdot,x) \circ T_{\omega, \beta}, \quad \text{a.e.   } x \in (0,\infty),
\end{equation*}
as elements of $L(H,\B)$, the space of linear bounded operators from $H$ into $\B$.

Let $f \in \mathcal{S}_\lambda(0,\infty) \otimes \B$. Since $\Delta_\lambda^{i\omega}f \in L^p(0,\infty) \otimes \mathbb{B}$, \eqref{A1} implies that
\begin{equation}\label{28.1}
    G_{P,\B}^{\lambda,\beta}(\Delta_\lambda^{i\omega}f)(\cdot,x)
        = - G_{P,\B}^{\lambda,\beta+1}(f)(\cdot,x) \circ T_{\omega, \beta}, \quad \text{a.e.   } x \in (0,\infty),
\end{equation}
as elements of $\gamma(H,\B)$. Moreover, according to the ideal property for $\gamma$-radonifying operators \cite[Theorem 6.2]{Nee}, we get
$$\Big\| G_{P,\B}^{\lambda,\beta+1}(f)(\cdot,x) \circ T_{\omega, \beta} \Big\|_{\gamma(H,\B)}
    \leq \| T_{\omega, \beta} \|_{L(H,H)} \Big\| G_{P,\B}^{\lambda,\beta+1}(f)(\cdot,x)  \Big\|_{\gamma(H,\B)}, \quad \text{a.e.  } x \in (0,\infty).$$
Then, \eqref{A1} and \eqref{28.1}  lead to
\begin{align*}
    \Big\| \Delta_\lambda^{i \omega}(f) \Big\|_{L^p((0,\infty),\B)}
        & \leq C \Big\| G_{P,\B}^{\lambda,\beta}(\Delta_\lambda^{i\omega}f) \Big\|_{L^p((0,\infty),\gamma(H,\B))}
         = C \Big\| G_{P,\B}^{\lambda,\beta+1}(f)\circ T_{\omega, \beta} \Big\|_{L^p((0,\infty),\gamma(H,\B))} \\
        & \leq C \Big\| G_{P,\B}^{\lambda,\beta+1}(f)\Big\|_{L^p((0,\infty),\gamma(H,\B))}
         \leq C \|f\|_{L^p((0,\infty),\B)}.
\end{align*}
Hence, $\Delta_\lambda^{i\omega}$ can be extended from $L^p(0,\infty) \otimes \B$ to $L^p((0,\infty),\B)$ as a bounded operator
from $L^p((0,\infty),\B)$ into itself. By Proposition~\ref{ImagBess} we conclude that $\B$ is UMD, and
the proof of Theorem~\ref{Th1.4} is complete.

%\newpage
%%%%%%%%%%%%%%%%%%%%%%%%%%%%%%%%%%%%%%%%%%%%%%%%%%%%%%%%%%%%%%%%%%%%%%%%%%%%%%%%%%%%%%%%%%%%%%%%%%%%%%%%%%%%%%%%%%%%
\section{Proof of Theorem~\ref{Th1.5}}\label{sec:Proof5}
%%%%%%%%%%%%%%%%%%%%%%%%%%%%%%%%%%%%%%%%%%%%%%%%%%%%%%%%%%%%%%%%%%%%%%%%%%%%%%%%%%%%%%%%%%%%%%%%%%%%%%%%%%%%%%%%%%%%

The Bessel operator $\Delta_\lambda$ is positive in $L^2(0,\infty)$. Then, the square root $\sqrt{\Delta}_\lambda$ of
$\Delta_\lambda$ is defined by
$$\sqrt{\Delta}_\lambda f
    = h_\lambda \left( y h_\lambda(f)\right), \quad f \in D(\sqrt{\Delta}_\lambda),$$
where, since $h_\lambda$ is an isometry in $L^2(0,\infty)$, the domain $D(\sqrt{\Delta}_\lambda)$ of $\sqrt{\Delta}_\lambda$
is the following set
$$D(\sqrt{\Delta}_\lambda)
    = \{ f \in L^2(0,\infty) : y h_\lambda(f) \in L^2(0,\infty)\}.$$

The Poisson semigroup $\{P_t^\lambda\}_{t>0}$ is the one generated by the operator $-\sqrt{\Delta}_\lambda$. We
define $M(y)=m(y^2)$, $y \in (0,\infty)$. It is clear that the $\sqrt{\Delta}_\lambda$-multiplier
associated with $M$ coincides with the $\Delta_\lambda$-multiplier defined by $m$. Since the function $M$ satisfies
the conditions specified in \cite[Theorem 1]{Me1}, from the proof of \cite[Theorem 1]{Me1} we deduce that,
for every $n \in \N$, and $f \in \mathcal{S}_\lambda(0,\infty)$,
\begin{equation}\label{29.1}
    t^{n+1} \partial_t^{n+1} P_t^\lambda \left( M(\sqrt{\Delta}_\lambda)f\right)(x)
        = \frac{1}{2\pi} \int_\R \mathcal{M}_n(t,u) t \partial_t P_{t/2}^\lambda \left( \Delta_\lambda^{iu/2} f \right)(x)du, \quad t,x \in (0,\infty),
\end{equation}
where
$$\mathcal{M}_n(t,u)
    = \int_0^\infty y^{-iu-1}M_n(t,y) dy, \quad u \in \R \text{ and } t \in (0,\infty),$$
and
$$M_n(t,y)
    = (ty)^n e^{-ty/2} M(y), \quad t,y \in (0,\infty).$$

We also have that, for every $n \in \N$ and $f \in \mathcal{S}_\lambda(0,\infty) \otimes \B$,
$$t^{n+1} \partial_t^{n+1} P_t^\lambda \left( M(\sqrt{\Delta}_\lambda)f\right)(x)
    = \frac{1}{2\pi} \int_\R \mathcal{M}_n(t,u) t \partial_t P_{t/2}^\lambda \left( \Delta_\lambda^{iu/2} f \right)(x)du, \quad t,x \in (0,\infty).$$
Moreover, according to \cite[Theorem 1]{Me1}, $M(\sqrt{\Delta}_\lambda)f \in L^p(0,\infty)\otimes \B$, $f \in \mathcal{S}_\lambda(0,\infty) \otimes \B$.

Let $n \in \N$. We define, for every $u \in \R$, the operator
$$L_{n,u}(h)(t)
    = \mathcal{M}_n(t,u)h(t),\quad t \in (0,\infty).$$
Since
$$\sup_{\substack{ u \in \R \\ t \in (0,\infty) }} \left| \mathcal{M}_n(t,u) \right|
    \leq C \|m\|_{L^\infty(0,\infty)},$$
the family of operators $\{L_{n,u}\}_{u \in \R}$ is bounded in $L(H,H)$.

Let $f \in \mathcal{S}_\lambda(0,\infty) \otimes \B$. Since $h_\lambda$ is an isometry in $L^2(0,\infty)$, \eqref{23.1} and \eqref{28.2}
allow us to write
$$t \partial_t P_{t/2}^\lambda \left( \Delta_\lambda^{iu/2} f\right)(x)
    = -\frac{1}{2} h_\lambda \left( ty e^{-ty/2} y^{iu} h_\lambda(f)(y) \right)(x), \quad t,x \in (0,\infty) \text{ and } u \in \R.$$
Then, Minkowski's inequality leads to
\begin{align*}
    & \left( \int_0^\infty \left\| t \partial_t P_{t/2}^\lambda \left( \Delta_\lambda^{iu/2}f \right)(x) \right\|_\B^2 \frac{dt}{t} \right)^{1/2}
        \leq C \int_0^\infty \|h_\lambda(f)(y)\|_\B \left( \int_0^\infty \left| t y e^{-ty/2} \right|^2 \frac{dt}{t} \right)^{1/2} dy \\
    & \qquad \leq C \int_0^\infty \|h_\lambda(f)(y)\|_\B dy < \infty, \quad x \in (0,\infty) \text{ and } u \in \R,
\end{align*}
because $h_\lambda(f) \in \mathcal{S}_\lambda(0,\infty) \otimes \B$ and the function $\sqrt{z} J_\nu(z)$ is bounded on $(0,\infty)$
when $\nu>-1/2$. We conclude that
$$t \partial_t P_{t/2}^\lambda \left( \Delta_\lambda^{iu/2} f \right)(x) \in \gamma(H,\B), \quad u \in \R \text{ and } x \in (0,\infty).$$
According to \cite[p. 642]{Me1}, we get
$$\int_\R |\mathcal{M}_n(t,u)| \left\|t \partial_t P_{t/2}^\lambda \left( \Delta_\lambda^{iu/2} f \right)(x) \right\|_\B du
    \in L^p\left( (0,\infty), L^2\left( (0,\infty), dt/t \right)\right),$$
and we infer that
$$\int_0^\infty \left( \int_\R |\mathcal{M}_n(t,u)| \left\|t \partial_t P_{t/2}^\lambda \left( \Delta_\lambda^{iu/2} f \right)(x) \right\|_\B du\right)^2 \frac{dt}{t}
    <\infty, \quad \text{a.e.  } x \in (0,\infty).$$
If $h \in H$ we have that
\begin{align*}
    & \int_0^\infty \int_\R \mathcal{M}_n(t,u) t \partial_t P_{t/2}^\lambda \left( \Delta_\lambda^{iu/2} f \right)(x)   h(t) \frac{dudt}{t} \\
    & \qquad = \int_\R \int_0^\infty \mathcal{M}_n(t,u) t \partial_t P_{t/2}^\lambda \left( \Delta_\lambda^{iu/2} f \right)(x) h(t) \frac{dtdu}{t} ,
    \quad \text{a.e.  } x \in (0,\infty).
\end{align*}
Hence, if $\{h_j\}_{j=1}^k$ is an orthonormal system in $H$ we can write
\begin{align*}
    & \left( \E \Big\| \sum_{j=1}^k \gamma_j
                    \int_0^\infty \int_\R \mathcal{M}_n(t,u) t \partial_t P_{t/2}^\lambda \left( \Delta_\lambda^{iu/2} f \right)(x)   h_j(t) \frac{dudt}{t} \Big\|_\B^2\right)^{1/2} \\
    & \qquad = \left( \E \Big\| \int_\R \sum_{j=1}^k \gamma_j
                    \int_0^\infty  \mathcal{M}_n(t,u) t \partial_t P_{t/2}^\lambda \left( \Delta_\lambda^{iu/2} f \right)(x) h_j(t) \frac{dtdu}{t}   \Big\|_\B^2\right)^{1/2} \\
    & \qquad \leq \int_\R \left( \E \Big\|  \sum_{j=1}^k \gamma_j
                    \int_0^\infty  \mathcal{M}_n(t,u) t \partial_t P_{t/2}^\lambda \left( \Delta_\lambda^{iu/2} f \right)(x) h_j(t) \frac{dt}{t}   \Big\|_\B^2\right)^{1/2} du \\
    & \qquad \leq \int_\R \Big\| \mathcal{M}_n(t,u) t \partial_t P_{t/2}^\lambda \left( \Delta_\lambda^{iu/2} f \right)(x)\Big\|_{\gamma(H,\B)} du,
    \quad \text{a.e.  } x \in (0,\infty).
\end{align*}
Here $\{\gamma_j\}_{j=1}^\infty$ is a sequence of independent Gaussian variables.

We conclude that, for a.e.  $x \in (0,\infty)$,
\begin{equation}\label{30.1}
    \left\| \int_\R  \mathcal{M}_n(t,u) t \partial_t P_{t/2}^\lambda \left( \Delta_\lambda^{iu/2} f \right)(x) du \right\|_{\gamma(H,\B)}
        \leq C   \int_\R  \left\| \mathcal{M}_n(t,u) t \partial_t P_{t/2}^\lambda \left( \Delta_\lambda^{iu/2} f \right)(x)  \right\|_{\gamma(H,\B)} du.
\end{equation}

For every $u \in \R$ we have that
$$\mathcal{M}_n(t,u) t \partial_t P_{t/2}^\lambda \left( \Delta_\lambda^{iu/2} f \right)(x)
    = t \partial_t P_{t/2}^\lambda \left( \Delta_\lambda^{iu/2} f \right)(x) \circ L_{n,u}, \quad \text{a.e.  } x \in (0,\infty),$$
in the sense of equality in $L(H,\B)$. According to \cite[Theorem 6.2]{Nee} we deduce
\begin{align}\label{30.2}
    & \left\| \mathcal{M}_n(t,u) t \partial_t P_{t/2}^\lambda \left( \Delta_\lambda^{iu/2} f \right)(x) \right\|_{\gamma(H,\B)}
        \leq \|L_{n,u}\|_{L(H,H)} \left\| t \partial_t P_{t/2}^\lambda \left( \Delta_\lambda^{iu/2} f \right)(x) \right\|_{\gamma(H,\B)} \nonumber \\
    & \qquad \leq C\sup_{t>0} |\mathcal{M}_n(t,u)| \left\| t \partial_t P_{t/2}^\lambda \left( \Delta_\lambda^{iu/2} f \right)(x) \right\|_{\gamma(H,\B)},
    \quad \text{a.e.  } x \in (0,\infty).
\end{align}

Putting together \eqref{29.1}, \eqref{30.1}, \eqref{30.2} and by taking into account Theorem~\ref{boundedness} and Proposition~\ref{ImagBess} we obtain
\begin{align*}
    \|m(\Delta_\lambda)f\|_{L^p((0,\infty),\B)}
        = & \|M(\sqrt{\Delta_\lambda})f\|_{L^p((0,\infty),\B)}
        \leq  C \left\| G_{P,\B}^{\lambda, n+1} (M(\sqrt{\Delta_\lambda})f)\right\|_{L^p((0,\infty),\gamma(H,\B))} \\
        \leq & C \left\| \int_\R  \mathcal{M}_n(t,u) t \partial_t P_{t/2}^\lambda \left( \Delta_\lambda^{iu/2} f \right)(x) du \right\|_{L^p((0,\infty),\gamma(H,\B))} \\
        \leq & C \int_\R \sup_{t>0} |\mathcal{M}_n(t,u)|   \left\|   G_{P,\B}^{\lambda,1} \left( \Delta_\lambda^{iu/2} f \right)  \right\|_{L^p((0,\infty),\gamma(H,\B))} du \\
        \leq & C \int_\R \sup_{t>0} |\mathcal{M}_n(t,u)|   \left\|   \Delta_\lambda^{iu/2} f   \right\|_{L^p((0,\infty),\B)} du \\
        \leq & C \left(\int_\R \sup_{t>0} |\mathcal{M}_n(t,u)|   \left\|   \Delta_\lambda^{iu/2} \right\|_{L^p((0,\infty),\B) \to L^p((0,\infty),\B) } du \right)
                \|f\|_{L^p((0,\infty),\B)}.
\end{align*}
Hence, $m(\Delta_\lambda)$ can be extended from $\mathcal{S}_\lambda(0,\infty) \otimes \B$ to $L^p((0,\infty),\B)$ as a bounded operator
from $L^p((0,\infty),\B)$ into itself.

%\newpage
%%%%%%%%%%%%%%%%%%%%%%%%%%%%%%%%%%%%%%%%%%%%%%%%%%%%%%%%%%%%%%%%%%%%%%%%%%%%%%%%%%%%%%%%%%%%%%%%%%%%%%%%%%%%%%%%%%%%
\section{Proof of Theorem~\ref{Th1.6}}\label{sec:Proof6}
%%%%%%%%%%%%%%%%%%%%%%%%%%%%%%%%%%%%%%%%%%%%%%%%%%%%%%%%%%%%%%%%%%%%%%%%%%%%%%%%%%%%%%%%%%%%%%%%%%%%%%%%%%%%%%%%%%%%

In order to apply Theorem~\ref{Th1.5} it is necessary to know nice estimations for the norm
$$ \|   \Delta_\lambda^{i\omega} \|_{L^p((0,\infty),\B) \to L^p((0,\infty),\B) }, \quad \omega \in \R \setminus \{0\}.$$

\begin{Prop}\label{Propnorm}
    Let $X$ be a UMD Banach space, $\lambda >0$ and $1<p<\infty$. Then, there exists $C>0$ such that
    $$\| \Delta_\lambda^{i\omega} \|_{L^p((0,\infty),X) \to L^p((0,\infty),X) }
        \leq C e^{\pi |\omega|}, \quad \omega \in \R.$$
    Moreover, if $\lambda \geq 1$ for every $\omega \in \R \setminus \{0\}$,
    $\Delta_\lambda^{i\omega}$ can be extended to $L^1((0,\infty),X)$
    as a bounded operator from $L^1((0,\infty),X)$ into $L^{1,\infty}((0,\infty),X)$, and
    $$\| \Delta_\lambda^{i\omega} \|_{L^1((0,\infty),X) \to L^{1,\infty}((0,\infty),X)}
        \leq C e^{\pi |\omega|},$$
    where $C>0$ does not depend on $\omega$.
\end{Prop}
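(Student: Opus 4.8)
The plan is to follow the comparison scheme used in the proof of Proposition~\ref{ImagBess}, now keeping track of the dependence on $\omega$ of every constant that appears.

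First I would establish the Euclidean counterpart on $L^p(\R,X)$. The operator $\left(-\frac{d^2}{dx^2}\right)^{i\omega}$ is the Fourier multiplier with symbol $|\xi|^{2i\omega}$, which satisfies $\big\||\xi|^{2i\omega}\big\|_\infty=1$ and $|\xi|\,\big|\frac{d}{d\xi}|\xi|^{2i\omega}\big|=2|\omega|$. Since $X$ is UMD, the vector-valued Marcinkiewicz (Mikhlin) multiplier theorem for scalar symbols gives $\big\|\left(-\frac{d^2}{dx^2}\right)^{i\omega}\big\|_{L^p(\R,X)\to L^p(\R,X)}\le C(1+|\omega|)\le Ce^{\pi|\omega|}$ for $1<p<\infty$. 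For the weak-type endpoint I would use the kernel representation $K_\omega(x,y)=-\int_0^\infty \frac{t^{-i\omega}}{\G(1-i\omega)}\partial_t\W_t(x-y)\,dt$ recalled in the proof of Proposition~\ref{ImagBess}: from the identity $|\G(1-i\omega)|^2=\pi|\omega|/\sinh(\pi|\omega|)$ one gets $1/|\G(1-i\omega)|\le Ce^{\pi|\omega|/2}$, so together with the standard size and smoothness bounds for $\W_t$ and $\partial_t\W_t$ the kernel $K_\omega$ is a Calder\'on--Zygmund kernel with constant $O\!\big((1+|\omega|)e^{\pi|\omega|/2}\big)$; the vector-valued Calder\'on--Zygmund theorem, using the $L^2(\R,X)$-boundedness just obtained (the multiplier has modulus one), then yields $\big\|\left(-\frac{d^2}{dx^2}\right)^{i\omega}\big\|_{L^1(\R,X)\to L^{1,\infty}(\R,X)}\le Ce^{\pi|\omega|}$.

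Next I would transfer these bounds to $(0,\infty)$ exactly as in the proof of Proposition~\ref{ImagBess}. Given $f$ in $L^p((0,\infty),X)$ (resp. $L^1((0,\infty),X)$), extend it by $0$ to $\R$; then, comparing the pointwise formula \eqref{Lp25} for $\Delta_\lambda^{i\omega}$ with the corresponding formula for $\left(-\frac{d^2}{dx^2}\right)^{i\omega}$, the operator $\left(-\frac{d^2}{dx^2}\right)^{i\omega}-\Delta_\lambda^{i\omega}$ on the half-line has kernel $K_\omega(x,y)-K^\lambda_\omega(x,y)$, plus a reflection term with kernel $K_\omega(x,-y)$. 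Carrying the factor $1/|\G(1-i\omega)|\le Ce^{\pi|\omega|/2}$ through the heat-kernel estimates \eqref{Lp30}, \eqref{Lp31} that underlie \eqref{Lp26}, \eqref{Lp27}, \eqref{Lp32}, \eqref{Lp33}, one obtains
\begin{equation*}
    \big|K_\omega(x,y)-K^\lambda_\omega(x,y)\big|+\big|K_\omega(x,-y)\big|
        \le Ce^{\pi|\omega|/2}\Big(\tfrac{1}{x}\,\mathbbm{1}_{\{0<y<x/2\}}+\tfrac{1}{y}\,\mathbbm{1}_{\{y>2x\}}+\tfrac{1}{\max\{x,y\}}\,\mathbbm{1}_{\{x/2<y<2x\}}\Big),
\end{equation*}
whose right-hand side is controlled by the kernels of $H_0+H_\infty$. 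Since $H_0$ and $H_\infty$ are bounded on $L^p(0,\infty)$ and map $L^1(0,\infty)$ into $L^{1,\infty}(0,\infty)$ (\cite[p.\ 244]{HLP}), the difference and reflection operator is bounded with norm $\le Ce^{\pi|\omega|/2}$ in both settings; combining with the first step gives $\|\Delta_\lambda^{i\omega}\|_{L^p((0,\infty),X)\to L^p((0,\infty),X)}\le Ce^{\pi|\omega|}$ and, for $\lambda\ge1$, the stated $L^1\to L^{1,\infty}$ bound. The role of the hypothesis $\lambda\ge1$ is confined to the endpoint case, where it guarantees that the pointwise representation \eqref{Lp25} and the associated heat/Poisson kernel facts are available in the form needed for $L^1$ functions; for $1<p<\infty$ the argument works for every $\lambda>0$.

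The main obstacle I anticipate is bookkeeping: verifying that all constants entering \eqref{Lp26}--\eqref{Lp33}, the Hardy operator norms, and the reflection term are genuinely independent of $\omega$, so that the only $\omega$-growth is the explicit $1/|\G(1-i\omega)|\le Ce^{\pi|\omega|/2}$ together with the $O(1+|\omega|)$ Marcinkiewicz/Calder\'on--Zygmund constants, all absorbed into $e^{\pi|\omega|}$. A secondary point needing care is that \cite[Theorem p.\ 402]{Gue} is only qualitative, so the quantitative Euclidean estimate (and its endpoint version) has to be supplied separately, via the vector-valued Marcinkiewicz multiplier theorem and vector-valued Calder\'on--Zygmund theory with $\omega$-dependent constants.
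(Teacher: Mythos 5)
Your argument is correct in outline, but it follows a genuinely different route from the paper's. For the $L^p$ bound the paper does not transfer from the Euclidean case at all: it stays inside its square-function machinery, using the identity \eqref{28.1}, $G_{P,X}^{\lambda,1}(\Delta_\lambda^{i\omega}f)=-G_{P,X}^{\lambda,2}(f)\circ T_\omega$, the ideal property of $\gamma$-radonifying operators, the bound $\|T_\omega\|_{L(H,H)}\leq 1/|\G(1-2i\omega)|\leq e^{\pi|\omega|}$, and the two-sided estimates of Theorem~\ref{boundedness}; this is self-contained and is where the constant $e^{\pi|\omega|}$ comes from. You instead quantify the comparison scheme of Proposition~\ref{ImagBess}: a vector-valued Marcinkiewicz/Mikhlin bound $C(1+|\omega|)$ for $\left(-\frac{d^2}{dx^2}\right)^{i\omega}$ on $L^p(\R,X)$ (an external but standard input replacing the purely qualitative citation of Guerre-Delabri\`ere), plus the kernel comparison \eqref{Lp26}--\eqref{Lp33} with the factor $1/|\G(1-i\omega)|\leq Ce^{\pi|\omega|/2}$ tracked, which even yields a slightly better growth than $e^{\pi|\omega|}$. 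For the endpoint the divergence is sharper: the paper proves directly that $K_\omega^\lambda$ is a standard Calder\'on--Zygmund kernel, via the size bound \eqref{Lp37} and the gradient bounds \eqref{Lp40}--\eqref{Lp41}, and it is precisely in the gradient estimate for $xy\leq 2t$ that $\lambda\geq 1$ is used; it then applies Banach-valued CZ theory together with \eqref{Lp36.9}. Your route avoids any regularity estimate on $K_\omega^\lambda$: you get the Euclidean weak $(1,1)$ bound from the convolution kernel $K_\omega$ and dominate the half-line difference by the Hardy operators, which are weak $(1,1)$ (note $H_\infty$ is even bounded on $L^1$). Since the comparison estimates \eqref{Lp30}--\eqref{Lp33} hold for every $\lambda>0$, your argument in fact gives the weak-type conclusion without the restriction $\lambda\geq 1$; the paper's hypothesis is tied to its method (making $K_\omega^\lambda$ itself a standard kernel, which is a stronger structural fact), not, as you suggest, to the availability of the representation \eqref{Lp25}.

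Three small points to tighten. First, the parenthetical ``the multiplier has modulus one'' must not be read as giving $L^2(\R,X)$-boundedness by Plancherel --- that fails for non-Hilbert $X$; what you actually need (and already have) is the Mikhlin bound at $p=2$, so state that as the $L^{p_0}$ input of the CZ theorem. Second, with extension by zero there is no reflection term $K_\omega(x,-y)$ in this direction (it only arises in the converse implication of Proposition~\ref{ImagBess}); including it is harmless but unnecessary. Third, the weak $(1,1)$ bounds for $H_0$ and $H_\infty$ are not in the cited place in Hardy--Littlewood--P\'olya (which concerns $1<p<\infty$); give the one-line direct argument, and extend from the dense class on which \eqref{Lp25} holds to all of $L^1((0,\infty),X)$ using completeness of $L^{1,\infty}$ or the kernel representation.
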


\begin{proof}
    Let $\omega \in \R \setminus \{0\}$. According to Proposition~\ref{ImagBess} the operator
    $\Delta_\lambda^{i\omega}$ can be extended to $L^p((0,\infty),X)$ as a bounded operator from  $L^p((0,\infty),X)$
    into itself. Moreover, by \eqref{28.1}, for every $f \in \mathcal{S}_\lambda(0,\infty) \otimes X$, we have that
    $$G_{P,X}^{\lambda,1}(\Delta_\lambda^{i\omega}f)(\cdot,x)
        = - G_{P,X}^{\lambda,2}(f)(\cdot,x) \circ T_{\omega}, \quad \text{a.e.  } x \in (0,\infty),$$
    as elements of $\gamma(H,X)$, where
    $$T_\omega(h)(t)
        = \frac{1}{t} \int_0^t h(t-s) \frac{s^{-2 i \omega}}{\G(1-2 i \omega)}ds, \quad h \in H.$$
    As in the proof of Theorem~\ref{Th1.4} we can see that
    $$\|T_\omega\|_{L(H,H)}
        \leq \frac{1}{|\G(1-2 i \omega)|}
        \leq  e^{\pi |\omega|},$$
    and
    \begin{align*}
        \left\| \Delta_\lambda^{i \omega} f \right\|_{L^p((0,\infty),X)}
            \leq & C \left\| G_{P,X}^{\lambda,1}(\Delta_\lambda^{i \omega} f) \right\|_{L^p((0,\infty),\gamma(H,X))} \\
            \leq & C e^{\pi |\omega|}\left\| G_{P,X}^{\lambda,2}( f) \right\|_{L^p((0,\infty),\gamma(H,X))}
            \leq   C e^{\pi |\omega|} \left\| f \right\|_{L^p((0,\infty),X)}, \quad f \in \mathcal{S}_\lambda(0,\infty) \otimes X,
    \end{align*}
    that is,
    \begin{equation}\label{Lp36.9}
        \| \Delta_\lambda^{i\omega} \|_{L^p((0,\infty),X) \to L^p((0,\infty),X) }
            \leq C e^{\pi |\omega|},
    \end{equation}
    where $C>0$ does not depend on $\omega$.

    We are going to show that $\Delta_\lambda^{i\omega}$ is a $X$-valued Calder\'on-Zygmund operator. According to \eqref{Lp30}
    and \eqref{Lp31} we have that
    $$\left| \partial_t W_t^\lambda(x,y) \right|
        \leq C \frac{e^{-c(x-y)^2/t}}{t^{3/2}}, \quad t,x,y \in (0,\infty).$$
    Then,
    \begin{equation}\label{Lp37}
        \left| K^\lambda_\omega(x,y) \right|
            \leq C \int_0^\infty \frac{|t^{-i \omega}|}{|\G(1-i \omega)|} \frac{e^{-c(x-y)^2/t}}{t^{3/2}} dt
            \leq C \frac{e^{\pi |\omega|/2}}{|x-y|}, \quad x,y \in (0,\infty), \ x \neq y.
    \end{equation}
    From \eqref{Lp37} we deduce that, for every $f \in \mathcal{S}_\lambda(0,\infty) \otimes X$,
    $$\int_0^\infty |K^\lambda_\omega(x,y)| \ |f(y)|dy < \infty, \quad x \notin \supp(f).$$
    Hence, for each $f \in \mathcal{S}_\lambda(0,\infty) \otimes X$, \eqref{Lp25} implies that
    $$\Delta_\lambda^{i\omega} f(x)
        = \int_0^\infty K^\lambda_\omega(x,y)f(y)dy, \quad \text{a.e.  } x \notin \supp(f).$$

    We can write
    \begin{align*}
        \partial_x \partial_t W_t^\lambda(x,y)
            = & \partial_x \partial_t \left[ \W_t(x-y) \sqrt{2\pi} \left( \frac{xy}{2t} \right)^{1/2} I_{\lambda-1/2}\left( \frac{xy}{2t}\right)e^{-xy/2t}\right] \\
            = & \partial_x \partial_t [\W_t(x-y) ]\sqrt{2\pi} \left( \frac{xy}{2t} \right)^{1/2} I_{\lambda-1/2}\left( \frac{xy}{2t}\right)e^{-xy/2t} \\
              & + \partial_x  [\W_t(x-y) ]\sqrt{2\pi} \partial_t \left[\left( \frac{xy}{2t} \right)^{1/2} I_{\lambda-1/2}\left( \frac{xy}{2t}\right)e^{-xy/2t} \right] \\
              & + \partial_t  [\W_t(x-y)] \sqrt{2\pi} \partial_x \left[\left( \frac{xy}{2t} \right)^{1/2} I_{\lambda-1/2}\left( \frac{xy}{2t}\right)e^{-xy/2t} \right] \\
              & +  \W_t(x-y) \sqrt{2\pi} \partial_x \partial_t \left[ \left( \frac{xy}{2t} \right)^{1/2} I_{\lambda-1/2}\left( \frac{xy}{2t}\right)e^{-xy/2t}\right] \\
            = & \sum_{j=1}^4 \EE_j(t,x,y), \quad t,x,y \in (0,\infty).
    \end{align*}
    Applying \eqref{Lp29} and \eqref{24.1}  we obtain
    \begin{align*}
        & A)  \qquad \partial_t \left[\left( \frac{xy}{2t} \right)^{1/2} I_\nu\left( \frac{xy}{2t}\right)e^{-xy/2t} \right]
            = - \frac{xy}{2t^2} \frac{d}{dz} \left[z^{\nu + 1/2} z^{-\nu} I_\nu\left( z \right)e^{-z} \right]_{|_{z=xy/2t}} \\
        & \qquad \qquad = - \frac{xy}{2t^2} \left[ (\nu+1/2)z^{\nu-1/2}z^{-\nu} I_{\nu}(z)e^{-z}
                                                + z^{\nu+1/2}z^{-\nu} I_{\nu+1}(z)e^{-z}
                                                - z^{\nu+1/2}z^{-\nu} I_{\nu}(z)e^{-z} \right]_{|_{z=xy/2t}} \\
        & \qquad \qquad = - \frac{1}{\sqrt{2 \pi}} \frac{xy}{2t^2} \left[ \frac{\nu+1/2}{z} \left\{ 1 + \mathcal{O}\left( \frac{1}{z}\right) \right\}
                                                + 1 - \frac{[\nu+1,1]}{2z} + \mathcal{O}\left( \frac{1}{z^2}\right)
                                                - 1 + \frac{[\nu,1]}{2z} + \mathcal{O}\left( \frac{1}{z^2}\right) \right]_{|_{z=xy/2t}}\\
        & \qquad \qquad = \frac{xy}{t^2} \mathcal{O}\left(\left( \frac{t}{xy}\right)^2\right), \quad t,x,y \in (0,\infty);
    \end{align*}
    \begin{align*}
        & B)  \qquad \partial_x \left[\left( \frac{xy}{2t} \right)^{1/2} I_\nu\left( \frac{xy}{2t}\right)e^{-xy/2t} \right]
            = \frac{y}{t} \mathcal{O}\left(\left( \frac{t}{xy}\right)^2\right), \quad t,x,y \in (0,\infty); \hspace{3cm}
    \end{align*}
    \begin{align*}
        & C)  \qquad \partial_x \partial_t \left[\left( \frac{xy}{2t} \right)^{1/2} I_\nu\left( \frac{xy}{2t}\right)e^{-xy/2t} \right]
            = \partial_x \left[ - \frac{xy}{2t^2}\frac{d}{dz}\left[ z^{\nu+1/2}z^{-\nu} I_{\nu}(z)e^{-z} \right]_{|_{z=xy/2t}} \right]\\
        & \qquad \qquad = - \frac{y}{2t^2} \frac{d}{dz}\left[ z^{\nu+1/2}z^{-\nu} I_{\nu}(z)e^{-z} \right]_{|_{z=xy/2t}}
                          - \frac{xy^2}{4t^3} \frac{d^2}{dz^2}\left[ z^{\nu+1/2}z^{-\nu} I_{\nu}(z)e^{-z} \right]_{|_{z=xy/2t}} \\
        & \qquad \qquad = \frac{y}{t^2} \mathcal{O}\left(\left( \frac{t}{xy}\right)^2\right)
                                                - \frac{xy^2}{4t^3} \frac{d}{dz} \Big[ (\nu+1/2)z^{\nu-1/2}z^{-\nu} I_{\nu}(z)e^{-z}
                                                + z^{\nu+1/2}z^{-\nu} I_{\nu+1}(z)e^{-z}\\
        & \qquad \qquad \quad                   - z^{\nu+1/2}z^{-\nu} I_{\nu}(z)e^{-z} \Big]_{|_{z=xy/2t}} \\
        & \qquad \qquad = \frac{y}{t^2} \mathcal{O}\left(\left( \frac{t}{xy}\right)^2\right)
                                - \frac{xy^2}{4t^3}  \Big[\frac{\nu^2-1/4}{z^2}\sqrt{z}I_{\nu}(z)e^{-z}
                                                + \frac{2\nu+2}{z}\sqrt{z}I_{\nu+1}(z)e^{-z} \\
        & \qquad \qquad \quad                  + \sqrt{z} I_{\nu+2}(z)e^{-z}
                                                - 2 \sqrt{z}I_{\nu+1}(z)e^{-z}
                                                + \sqrt{z}I_{\nu}(z)e^{-z}
                                                -\frac{2\nu+1}{z} \sqrt{z} I_{\nu}(z)e^{-z}\Big]_{|_{z=xy/2t}}\\
        & \qquad \qquad = \frac{y}{t^2} \mathcal{O}\left(\left( \frac{t}{xy}\right)^2\right)
                                - \frac{xy^2}{4\sqrt{2\pi}t^3} \Big[ \frac{\nu^2-1/4}{z^2} \left\{ 1 + \mathcal{O}\left( \frac{1}{z}\right)\right\}
                                                            + \frac{2\nu+2}{z} \left\{ 1 - \frac{[\nu+1,1]}{2z}  + \mathcal{O}\left( \frac{1}{z^2}\right)\right\}\\
        & \qquad \qquad \quad                               -\frac{2\nu+1}{z}\left\{ 1 - \frac{[\nu,1]}{2z}  + \mathcal{O}\left( \frac{1}{z^2}\right)\right\}
                                                            + \left\{ 1 - \frac{[\nu+2,1]}{2z} + \frac{[\nu+2,2]}{4z^2}  + \mathcal{O}\left( \frac{1}{z^3}\right)\right\} \\
        & \qquad \qquad \quad                               - 2 \left\{ 1 - \frac{[\nu+1,1]}{2z} + \frac{[\nu+1,2]}{4z^2}  + \mathcal{O}\left( \frac{1}{z^3}\right)\right\}
                                                            + \left\{ 1 - \frac{[\nu,1]}{2z} + \frac{[\nu,2]}{4z^2}  + \mathcal{O}\left( \frac{1}{z^3}\right)\right\}\Big]_{|_{z=xy/2t}} \\
        & \qquad \qquad = \frac{xy^2}{t^3} \mathcal{O}\left( \left( \frac{t}{xy} \right)^3 \right), \quad t,x,y \in (0,\infty).
    \end{align*}
    Here $\nu=\lambda-1/2$.
    Then, we deduce \\

    $\displaystyle \bullet |\EE_1(t,x,y)|
        \leq C \frac{e^{-c(x-y)^2/t}}{t^2}, \quad t,x,y \in (0,\infty)$,\\

    $\displaystyle \bullet |\EE_2(t,x,y)|
        \leq C \frac{e^{-c(x-y)^2/t}}{t} \frac{xy}{t^2} \frac{t^2}{(xy)^2}, \quad t,x,y \in (0,\infty)$,\\

    $\displaystyle \bullet |\EE_3(t,x,y)|
        \leq C \frac{e^{-c(x-y)^2/t}}{t^{3/2}} \frac{y}{t} \frac{t^2}{(xy)^2}, \quad t,x,y \in (0,\infty)$,\\

    \noindent and

    $\displaystyle \bullet |\EE_4(t,x,y)|
        \leq C \frac{e^{-c(x-y)^2/t}}{t^{1/2}} \frac{xy^2}{t^3} \frac{t^3}{(xy)^3}, \quad t,x,y \in (0,\infty)$.\\

    We now estimate
    $$\int_0^{xy/2} |\EE_j(t,x,y)| dt, \quad j=1,2,3,4.$$
    Firstly, we have that
    \begin{align*}
        \int_0^{xy/2} |\EE_1(t,x,y)| dt
            \leq & C \int_0^\infty \frac{e^{-c(x-y)^2/t}}{t^2} dt
            \leq \frac{C}{|x-y|^2}, \quad x,y \in (0,\infty), \ x \neq y,
    \end{align*}
    and also
    \begin{align*}
        \int_0^{xy/2} |\EE_2(t,x,y)| dt
            \leq & C \int_0^{xy/2} \frac{e^{-c(x-y)^2/t}}{txy} dt
            \leq C \int_0^\infty \frac{e^{-c(x-y)^2/t}}{t^2} dt
            \leq \frac{C}{|x-y|^2}, \quad x,y \in (0,\infty), \ x \neq y.
    \end{align*}
    To study $\EE_3$ and $\EE_4$ we distingue two cases
    \begin{align*}
        & \int_0^{xy/2} \left( |\EE_3(t,x,y)| + |\EE_4(t,x,y)| \right) dt
            \leq  C \int_0^{xy/2} \frac{e^{-c(x-y)^2/t}}{\sqrt{t}} \frac{dt}{x^2y} \\
        & \qquad \qquad    \leq \left\{ \begin{array}{l}
                            C \displaystyle \int_0^{xy/2} \frac{e^{-c(x-y)^2/t}}{\sqrt{t}} \frac{1}{x^2y} \left( \frac{xy}{t}\right)^{3/2} dt
                                \leq C \sqrt{\frac{y}{x}}\int_0^{xy/2} \frac{e^{-c(x-y)^2/t}}{t^2}  dt \\
                            C \displaystyle \int_0^{xy/2} \frac{e^{-c(x-y)^2/t}}{\sqrt{t}} \frac{1}{x^2y} \left( \frac{xy}{t}\right)^{2} dt
                                \leq C y\int_0^{xy/2} \frac{e^{-c(x-y)^2/t}}{t^{5/2}}  dt
                         \end{array}\right. \\
        & \qquad \qquad    \leq \left\{ \begin{array}{l}
                            C \displaystyle \int_0^\infty \frac{e^{-c(x-y)^2/t}}{t^2} dt
                                \leq \frac{C}{|x-y|^2}, \quad y<2x, \ x \in (0,\infty), \\
                            C \displaystyle y \int_0^\infty \frac{e^{-c(x-y)^2/t}}{t^{5/2}} dt
                                \leq C \frac{y}{|x-y|^3} \leq \frac{C}{|x-y|^2}, \quad y \geq 2x, \ x \in (0,\infty).
                         \end{array}\right.
    \end{align*}
    Hence, we conclude that
    \begin{equation}\label{Lp38}
        \int_0^{xy/2} \left| \partial_x \partial_t W_t^\lambda(x,y) \right| dt
            \leq \frac{C}{|x-y|^2}, \quad x,y \in (0,\infty), \ x \neq y.
    \end{equation}

    According to \eqref{Lp28} and by taking in mind the above calculations we get
    \begin{align*}
        \left| \partial_x \partial_t W_t^\lambda(x,y) \right|
            \leq & C \frac{e^{-c(x^2+y^2)/t}}{t^2} \left[ \left( \frac{xy}{t}\right)^\lambda + \frac{y}{\sqrt{t}} \left( \frac{xy}{t} \right)^{\lambda-1} \right] \\
            \leq & C \frac{e^{-c(x^2+y^2)/t}}{t^2} \left( 1 + \frac{y}{\sqrt{t}} \right), \quad t,x,y \in (0,\infty), \ xy \leq 2t,
    \end{align*}
    provided that $\lambda \geq 1$. Then,
    \begin{align}\label{Lp39}
        \int_{xy/2}^\infty \left| \partial_x \partial_t W_t^\lambda(x,y) \right| dt
            \leq & C \int_{0}^\infty \frac{e^{-c(x^2+y^2)/t}}{t^2} \left( 1 + \frac{y}{\sqrt{t}} \right) dt \nonumber \\
            \leq & C \left( \frac{1}{x^2+y^2} + \frac{y}{(x^2+y^2)^{3/2}} \right)
            \leq \frac{C}{|x-y|^2} , \quad x,y \in (0,\infty), \ x \neq y.
    \end{align}
    From \eqref{Lp38} and \eqref{Lp39} we deduce that
    \begin{equation}\label{Lp40}
        \left| \partial_x K^\lambda_\omega(x,y)\right|
            \leq C \frac{e^{\pi |\omega|/2}}{|x-y|^2}, \quad x,y \in (0,\infty), \ x \neq y.
    \end{equation}
    Since $K^\lambda_\omega(x,y)=K^\lambda_\omega(y,x)$, $x,y \in (0,\infty)$, we also have that
    \begin{equation}\label{Lp41}
        \left| \partial_y K^\lambda_\omega(x,y)\right|
            \leq C \frac{e^{\pi |\omega|/2}}{|x-y|^2}, \quad x,y \in (0,\infty), \ x \neq y.
    \end{equation}
    By \eqref{Lp37}, \eqref{Lp40} and \eqref{Lp41}, $K^\lambda_\omega$ is a standard Calder\'on-Zygmund kernel.

    By applying now the Calder\'on-Zygmund theory for Banach valued singular integral we obtain that the operator
    $\Delta_\lambda^{i\omega}$ can be extended to $L^1((0,\infty),X)$ as a bounded operator from $L^1((0,\infty),X)$
    into $L^{1,\infty}((0,\infty),X)$. Moreover, \eqref{Lp36.9}, \eqref{Lp37}, \eqref{Lp40} and \eqref{Lp41} lead to
    $$\left\| \Delta_\lambda^{i \omega} \right\|_{L^{1}((0,\infty),X) \to L^{1,\infty}((0,\infty),X)}
        \leq C e^{\pi |\omega|},$$
    where $C>0$ does not depend on $\omega$.
\end{proof}

\begin{Prop}\label{Prop norm1}
    Let $\HH$ be a Hilbert space and $\lambda>0$.
    Then, $\|\Delta_\lambda^{i\omega}\|_{L^2((0,\infty),\HH)}=1$, for every $\omega \in \R \setminus \{0\}$.
\end{Prop}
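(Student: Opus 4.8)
The plan is to exploit that, since $\HH$ is a Hilbert space, $L^2((0,\infty),\HH)$ is again a Hilbert space on which the Hankel transform acts isometrically, so that $\Delta_\lambda^{i\omega}$ is seen to be a composition of isometries (in fact a unitary operator).

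First I would record that the Hankel transformation $h_\lambda$, which is an isometry on the scalar space $L^2(0,\infty)$ with $h_\lambda^{-1}=h_\lambda$ (\cite[p. 214 and Theorem 129]{Ti}), extends to an isometry on $L^2((0,\infty),\HH)$ with $h_\lambda^2=I$. This follows from the scalar statement: reducing to the separable case (a.e. $f$ is valued in a separable subspace of $\HH$), fix an orthonormal basis $\{e_n\}_{n\in\N}$ of $\HH$; then $\langle h_\lambda f(\cdot),e_n\rangle_\HH=h_\lambda(\langle f(\cdot),e_n\rangle_\HH)$ for every $n$, and summing the scalar Plancherel identities over $n$ gives $\|h_\lambda f\|_{L^2((0,\infty),\HH)}=\|f\|_{L^2((0,\infty),\HH)}$, while $h_\lambda^2=I$ passes to the $\HH$-valued setting in the same way. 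Next, multiplication by the unimodular function $y\mapsto y^{2i\omega}$ is an isometry of $L^2((0,\infty),\HH)$, since $\|y^{2i\omega}g(y)\|_\HH=\|g(y)\|_\HH$ for a.e. $y\in(0,\infty)$.

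Combining these observations with \eqref{23.1}, which gives $\Delta_\lambda^{i\omega}f=h_\lambda\big(y^{2i\omega}h_\lambda(f)\big)$ for $f\in L^2(0,\infty)\otimes\HH$ and hence, by density, for every $f\in L^2((0,\infty),\HH)$, exhibits $\Delta_\lambda^{i\omega}$ as a composition of three isometries of $L^2((0,\infty),\HH)$; therefore $\|\Delta_\lambda^{i\omega}f\|_{L^2((0,\infty),\HH)}=\|f\|_{L^2((0,\infty),\HH)}$ for all such $f$. In particular $\|\Delta_\lambda^{i\omega}\|_{L^2((0,\infty),\HH)}=1$ — the exact value $1$, not merely an upper bound, being immediate since the operator is nonzero and norm-preserving. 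Equivalently, one notes that $\Delta_\lambda^{i\omega}$ is unitary on $L^2((0,\infty),\HH)$, with inverse $\Delta_\lambda^{-i\omega}=(\Delta_\lambda^{i\omega})^{*}$, using that $h_\lambda$ is self-adjoint and that the conjugate of $y^{2i\omega}$ is $y^{-2i\omega}$.

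No genuine obstacle is expected; the only step needing a line of justification is the passage of the Hankel--Plancherel identity from scalar to $\HH$-valued functions, which is routine via an orthonormal basis of $\HH$ and does not even require invoking Proposition~\ref{ImagBess}.
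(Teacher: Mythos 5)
Your proposal is correct and follows essentially the same route as the paper: extend the scalar Hankel--Plancherel identity to $L^2((0,\infty),\HH)$ (the paper does this by computing on elementary tensors $f=\sum_j a_j f_j$, you via an orthonormal basis of $\HH$, which is the same idea), then observe that multiplication by the unimodular symbol $y^{2i\omega}$ is an isometry, so $\Delta_\lambda^{i\omega}=h_\lambda(y^{2i\omega}h_\lambda(\cdot))$ is norm-preserving and has norm exactly $1$. No gaps; your extra remark on unitarity is a harmless addition beyond what the paper states.
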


\begin{proof}
    We consider $f \in L^2(0,\infty)\otimes \HH$, that is, $f=\sum_{j=1}^n a_j f_j$ where $a_j \in \HH$ and $f_j \in L^2(0,\infty)$.
    By using Plancherel equality for Hankel transforms on $L^2(0,\infty)$ we can write
    \begin{align*}
        \int_0^\infty \left\| h_\lambda(f)(x)\right\|_\HH^2 dx
            = & \int_0^\infty \langle h_\lambda(f)(x) , h_\lambda(f)(x) \rangle_\HH dx \\
            = & \sum_{i,j=1}^n \langle a_i , a_j \rangle_\HH \int_0^\infty h_\lambda(f_i)(x) h_\lambda(f_j)(x) dx
            = \sum_{i,j=1}^n \langle a_i , a_j \rangle_\HH \int_0^\infty f_i(x) f_j(x) dx \\
            = & \int_0^\infty \langle f(x) , f(x) \rangle_\HH dx
            = \int_0^\infty \|f(x)\|_\HH^2 dx.
    \end{align*}
    Hence, $h_\lambda$ can be extended to $L^2((0,\infty),\HH)$ as a bounded operator from $L^2((0,\infty),\HH)$ into itself.
    Since $|y^{2i \omega}|=1$, $y \in (0,\infty)$ and $\omega \in \R \setminus \{0\}$, by \eqref{23.1} we conclude that, for every
    $\omega \in \R \setminus \{0\}$, $\Delta_\lambda^{i\omega}$ is bounded from $L^2((0,\infty),\HH)$ into itself and
    $$\|\Delta_\lambda^{i\omega}\|_{L^2((0,\infty),\HH) \to L^2((0,\infty),\HH)}=1.$$
\end{proof}

Let $\omega \in \R \setminus \{0\}$ and assume that $\B=[\HH,X]_\theta$, where $\HH$ is a Hilbert space and $X$ is a UMD space,
$0<\theta<\vartheta/\pi$.
Then, by using the interpolation theorem for vector-valued Lebesgue spaces \cite[Theorem 5.1.2]{BL}, Propositions~\ref{Propnorm} and \ref{Prop norm1}
we deduce that, $\Delta_\lambda^{i\omega}$ is a
bounded operator from $L^p((0,\infty),\B)$ into itself, being $p=2/(1+\theta)$ and
\begin{align*}
    \|\Delta_\lambda^{i\omega}\|_{L^p((0,\infty),\B) \to L^p((0,\infty),\B)}
        \leq & C \|\Delta_\lambda^{i\omega}\|_{L^2((0,\infty),\HH) \to L^2((0,\infty),\HH)}^{1-\theta} \|\Delta_\lambda^{i\omega}\|_{L^1((0,\infty),X) \to L^{1,\infty}((0,\infty),X)}^{\theta}\\
        \leq & C e^{2\pi(1/p-1/2)|\omega|}.
\end{align*}
Here $C>0$ does not depend on $\omega$.

Since $\Delta_\lambda^{i\omega}$ is selfadjoint, by using duality and that $[\HH,X]_\theta^*=[\HH^*,X^*]_\theta$ (see \cite[p. 1007]{Hy}) we get
$$\|\Delta_\lambda^{i\omega}\|_{L^{p'}((0,\infty),\B) \to L^{p'}((0,\infty),\B)}
    \leq C e^{2\pi(1/p-1/2)|\omega|}.$$
Hence, another interpolation leads to
\begin{equation}\label{Lp42}
    \|\Delta_\lambda^{i\omega}\|_{L^{q}((0,\infty),\B) \to L^{q}((0,\infty),\B)}
        \leq C e^{2\pi(1/p-1/2)|\omega|}, \quad p \leq q \leq p'.
\end{equation}

Since $m$ is a bounded holomorphic function in $\sum_\vartheta$, the function $M(y)=m(y^2)$, $y \in (0,\infty)$, is bounded and holomorphic in
$\sum_{\vartheta/2}$.
The proof now can be finished by proceeding as in the proof of \cite[Theorem 3]{Me1} and by using \eqref{Lp42}.

%\newpage
%%%%%%%%%%%%%%%%%%%%%%%%%%%%%%%%%%%%%%%%%%%%%%%%%%%%%%%%%%%%%%%%%%%%%%%%%%%%%%%%%%%%%%%%%%%%%%%%%%%%%%%%%%%%%%%%%%%%
%       REFERENCIAS
%%%%%%%%%%%%%%%%%%%%%%%%%%%%%%%%%%%%%%%%%%%%%%%%%%%%%%%%%%%%%%%%%%%%%%%%%%%%%%%%%%%%%%%%%%%%%%%%%%%%%%%%%%%%%%%%%%%%

%\bibliographystyle{siam}
%\bibliography{references}

\end{document}